\g@addto@macro{\endabstract}{\@setabstract}
\DeclareMathOperator{\interior}{int}
\newcommand{\iidsim}{\stackrel {\textrm{ {\sc iid }}} {\sim} }
\newcommand{\1}{\mathbbm 1}
\newcommand*\diff{\mathop{}\!\mathrm{d}}
\renewcommand{\epsilon}{\varepsilon}
\newcommand{\mM}{\mathscr M}
\newcommand{\fF}{\mathscr F}
\newcommand{\zZ}{\mathscr Z}
\newcommand{\XX}{\mathsf X}
\newcommand{\YY}{\mathsf Y}
\newcommand{\ZZ}{\mathsf Z}
\newcommand{\RR}{\mathbbm R}
\newcommand{\NN}{\mathbbm N}
\newcommand{\PP}{\mathbbm P}
\newcommand{\EE}{\mathbbm E \,}
\theoremstyle{plain}
\newtheorem{theorem}{Theorem}[section]
\newtheorem{corollary}{Corollary}[section]
\newtheorem{lemma}{Lemma}[section]
\newtheorem{proposition}{Proposition}[section]
\theoremstyle{definition}
\newtheorem{example}{Example}[section]
\newtheorem{remark}{Remark}[section]
\newtheorem{assumption}{Assumption}[section]
\begin{document}

\title{}

\date{\today}

\begin{center}
  \LARGE 
  Optimal Timing of Decisions: A General Theory Based on Continuation 
  Values\footnote{ Financial support from Australian Research
      Council Discovery Grant DP120100321 is gratefully acknowledged. \\ 
  \emph{Email addresses:} \texttt{qingyin.ma@anu.edu.au}, \texttt{john.stachurski@anu.edu.au} }

  \bigskip
  \normalsize
  Qingyin Ma\textsuperscript{a} and John Stachurski\textsuperscript{b} \par \bigskip

  \textsuperscript{a, b}Research School of Economics, Australian National
  University \bigskip

  \today
\end{center}

\begin{abstract} 

	Building on insights of \cite{jovanovic1982selection} and subsequent authors, we 
	develop a comprehensive theory of optimal timing of decisions based around 
	continuation value functions and operators that act on them. Optimality results 
	are provided under 
	general settings, with bounded or unbounded reward functions. This approach 
	has several intrinsic advantages that we exploit in developing the 
	theory. One is that continuation value functions are smoother than value 
	functions, allowing for sharper analysis of optimal policies and more efficient 
	computation. Another is that, for a range of problems, the continuation value 
	function exists in a lower dimensional space than the value function, mitigating 
	the curse of dimensionality. In one typical experiment, this reduces the 
	computation time from over a week to less than three minutes.

    \vspace{1em}

    \noindent
    \textit{Keywords:} Continuation values, dynamic programming, optimal timing
\end{abstract}


\section{Introduction}

A large variety of decision making problems involve choosing when to act in
the face of risk and uncertainty.  Examples include deciding if or when to accept a
job offer, exit or enter a market, default on a loan, bring a new product to market,
exploit some new technology or business opportunity, or
exercise a real or financial option.  See, for example, \cite{mccall1970},
\cite{jovanovic1982selection}, \cite{hopenhayn1992entry}, 
\cite{dixit1994investment}, \cite{ericson1995markov}, \cite{peskir2006}, 
\cite{arellano2008default}, \cite{perla2014equilibrium}, and \cite{fajgelbaum2015uncertainty}.

The most general and robust techniques for solving these kinds of problems
revolve around the theory of dynamic programming.  The standard machinery
centers on the Bellman equation, which identifies current value in terms of a
trade off between current rewards and the discounted value of future states.
The Bellman equation is traditionally solved by framing the solution as a
fixed point of the Bellman operator.  Standard references include \cite{bellman1969new} and \cite{stokey1989}. Applications of these methods to optimal timing include \cite{dixit1994investment}, \cite{albuquerque2004optimal},
\cite{crawford2005uncertainty}, \cite{ljungqvist2012recursive}, and \cite{fajgelbaum2015uncertainty}.

Interestingly, over the past few decades, economists have initiated
development of an alternative method, based around continuation values, that
is both essentially parallel to the traditional method described above and yet
significantly different in certain asymmetric ways (described in detail below).

Perhaps the earliest technically sophisticated analysis based around
operations in continuation value function space is \cite{jovanovic1982selection}. In 
an incumbent firm's exit decision context, Jovanovic proposes an operator 
that is a contraction mapping on the space of bounded continuous functions, and 
shows that the unique fixed point of the operator coincides with the value of staying 
in the industry for the current period and then behave optimally. Intuitively, this 
value can be understood as the continuation value of the firm, since the firm gives 
up the choice to terminate the sequential decision process (exit the industry) in the 
current period. 

Other papers in a similar vein include \cite{burdett1988declining}, \cite{gomes2001equilibrium},
\cite{ljungqvist2008two}, \cite{lise2012job}, \cite{dunne2013entry}, \cite{moscarini2013stochastic}, and \cite{menzio2015equilibrium}.  All of the results found in these papers are
tied to particular applications, and many are applied rather than technical in
nature.

It is not difficult to understand why economists often focus on continuation
values as a function of the state rather than traditional value functions.
One is economic intuition. In a given context it might be more natural or
intuitive to frame a decision problem in terms of the continuation values
faced by an agent. For example, in a job search context, one of the key
questions is how the reservation wage, the wage at which the agent is indifferent
between accepting and rejecting an offer, changes with economic environments. Obviously, the continuation value, the value of rejecting the current offer,
has closer connection to the reservation wage than the value function, the maximum value of accepting and rejecting the offer.

There are, however, deeper reasons why a focus on continuation values can be
highly fruitful.  To illustrate, recall that, for a given problem, the value
function provides the value of optimally choosing to either act today or wait,
given the current environment.  The continuation value is the value associated
with choosing to wait today and then reoptimize next period, again taking into
account the current environment.  One key asymmetry arising here is that, if
one chooses to wait, then certain aspects of the current environment become
irrelevant, and hence need not be considered as arguments to the continuation
value function.

To give one example, consider a potential entrant to a market who must
consider fixed costs of entry, the evolution of prices, their own productivity
and so on.  In some settings, certain aspects of the environment will be
transitory, while others are persistent.  (For example, in \cite{fajgelbaum2015uncertainty}, prices and beliefs are persistent while fixed costs 
are transitory.) All relevant state components must be included in the value function, 
whether persistent or transitory, since all affect the choice of whether to enter or
wait today.  On the other hand, purely transitory components do not affect
continuation values, since, in that scenario, the decision to wait has already
been made.

Such asymmetries place the continuation value function in a lower dimensional
space than the value function whenever they exist, thereby mitigating the
curse of dimensionality.  This matters from both an analytical and a
computational perspective.  On the analytical side, lower dimensionality can
simplify challenging problems associated with, say, unbounded reward
functions, continuity and differentiability arguments, parametric monotonicity
results, etc.  On the computational side, reduction of the state space by even
one dimension can radically increase computational speed.  For example, while solving a well known version of the job search model in section \ref{ss:js_ls}, the continuation value based approach takes only 171 seconds to compute the optimal policy to a given level of precision, as opposed to more than 7 days for the traditional value function based approach.

One might imagine that this difference in dimensionality between the two
approaches could, in some circumstances, work in the other direction, with the
value function existing in a strictly lower dimensional space than the
continuation value function.  In fact this is not possible.  As will be clear
from the discussion below, for any decision problem in the broad class that we
consider, the dimensionality of the value function is always at least as
large.

Another asymmetry between value functions and continuation value functions is
that the latter are typically smoother.  For example, in a job search problem,
the value function is usually kinked at the reservation wage. However,
the continuation value function can be smooth.  More generally, continuation
value functions are lent smoothness by stochastic transitions, since integration is
a smoothing operation.  Like lower dimensionality, increased smoothness 
helps on both the analytical and the computational side.  On the computational
side, smoother functions are easier to approximate.  On the analytical side,
greater smoothness lends itself to sharper results based on derivatives, as
elaborated on below.

To summarize the discussion above, economists have pioneered the continuation
value function based approach to optimal timing of decisions.  This has been
driven by researchers correctly surmising that such an approach will yield
tighter intuition and sharper analysis than the traditional approach in many
modeling problems.  However, all of the analysis to date has been in the
context of specific, individual applications.  This fosters
unnecessary replication, inhibits applied researchers seeking off-the-shelf
results, and also hides deeper advantages.

In this paper we undertake a systematic study of optimal timing of decisions
based around continuation value functions and the operators that act on them.
The theory we develop accommodates both bounded rewards and the kinds of
unbounded rewards routinely encountered in modeling economic
decisions.\footnote{
		For example, many applications include Markov state processes (possibly with 
		unit roots), driving the state space and various common reward functions (e.g., 
		CRRA, CARA and log returns) unbounded (see, e.g, \cite{low2010wage}, 
		\cite{bagger2014tenure}, \cite{kellogg2014effect}). 
		Moreover, many search-theoretic studies model agent's learning behavior 
		(see, e.g., \cite{burdett1988declining}, \cite{mitchell2000scope}, 
		\cite{crawford2005uncertainty}, \cite{nagypal2007learning}, 
		\cite{timoshenko2015product}).
		To have favorable prior-posterior structure (e.g., both follow normal 
		distributions), unbounded state spaces and rewards are usually required. We 
		show that most of these problems can be handled without difficulty. } 
In fact, within the context of optimal timing, the assumptions placed on the
primitives in the theory we develop are weaker than those found in existing
work framed in terms of the traditional approach to dynamic programming, as 
discussed below.

We also exploit the asymmetries between traditional and continuation value
function based approaches to provide a detailed set of continuity,
monotonicity and differentiability results.  For example, we use the relative
smoothness of the continuation value function to state conditions under which
so-called ``threshold policies'' (i.e., policies where action occurs whenever
a reservation threshold is crossed) are continuously differentiable with respect to
features of the economic environment, as well as to derive expressions for the
derivative.

Since we explicitly treat unbounded problems, our work also contributes to ongoing research on dynamic programming with unbounded rewards. One  
general approach tackles unbounded rewards via the weighted supremum norm. The underlying idea is to introduce a weighted norm in a certain space of candidate functions, and then establish the contraction property for the relevant operator. This theory was pioneered by \cite{boud1990recursive} and has been used in numerous 
other studies of unbounded dynamic programming. Examples include 
\cite{becker1997capital},  \cite{alvarez1998dynamic}, \cite{duran2000dynamic, 
duran2003discounting} and \cite{le2005recursive}.

Another line of research treats unboundedness via the local contraction
approach, which constructs a local contraction based on a suitable sequence of
increasing compact subsets.  See, e.g., \cite{rincon2003existence},
\cite{rincon2009corrigendum}, \cite{martins2010existence} and
\cite{matkowski2011discounted}.  One motivation of this line of work is to deal with 
dynamic programming problems that are unbounded both above and 
below. 

So far, existing theories of unbounded dynamic programming have been confined to   
optimal growth problems. Rather less attention, however, has been paid to the study of optimal timing of decisions. Indeed, applied studies of unbounded problems in 
this field still rely on theorem 9.12 of \cite{stokey1989} (see, e.g., 
\cite{poschke2010regulation}, \cite{chatterjee2012spinoffs}).  Since the assumptions 
of this theorem are not based on model primitives, it is hard to verify in 
applications. Even if they are applicable to 
some specialized setups, the contraction mapping structure is unavailable. A recent 
study of unbounded problem via contraction mapping is \cite{kellogg2014effect}. 
However, he focuses on a highly specialized decision problem with linear 
rewards. Since there is no general unbounded dynamic programming theory in this 
field, we attempt to fill this gap.

Notably, the local contraction approach exploits the underlying structure of the technological correspondence related to the state process, which, in optimal 
growth models, provides natural bounds on the growth rate of the state process, 
thus a suitable selection of a sequence of compact subsets to construct local 
contractions. However, such structures are missing in most sequential decision settings we study, making the local contraction approach inapplicable. 

In response to that, we come back to the idea of weighted supremum norm, which 
turns out to interact well with the sequential decision structure we explore. To obtain an appropriate weight function, we introduce an innovative idea centered on dominating the future transitions of the reward functions, which renders the classical weighted supremum norm theory of \cite{boud1990recursive} as a special case, and leads to simple sufficient conditions that are straightforward to check in applications. 

The intuitions of our theory are twofold. First, when the underlying state process is mean-reverting, the effect of initial conditions tends to die out as time iterates forward, making the conditional expectations of the reward functions flatter than the original rewards. Second, in a wide 
range of applications, a subset of states are conditionally independent of the future 
states, so the conditional expectation of the payoff functions is actually 
defined on a space that is lower dimensional than the state space.\footnote{
		Technically, this also accounts for the lower dimensionality of the continuation 
		value function than the value function, as documented above. Section 
		\ref{s:opt_pol} provides a detailed discussion.}  
In each scenario, finding an appropriate weight function becomes an easier job.

The paper is structured as follows. Section \ref{s:opt_results} outlines the
method and provides the basic optimality results.
Section \ref{s:properties_cv} discusses the properties of the
continuation value function, such as monotonicity and
differentiability. Section \ref{s:opt_pol} explores the
connections between the continuation value and the optimal policy. Section
\ref{s:application} provides a list of economic applications and compares the 
computational efficiency of the continuation value approach and traditional 
approach. Section \ref{s:extension} provides extensions and section \ref{s:conclude} 
concludes. Proofs are deferred to the appendix.

\section{Optimality Results}
\label{s:opt_results}

This section studies the optimality results. Prior to this task, we introduce some 
mathematical techniques used in this paper.

\subsection{Preliminaries}
\label{ss:prel}

For real numbers $a$ and $b$ let $a \vee b := \max\{a, b\}$.  If $f$ and $g$
are functions, then $(f \vee g)(x) := f(x) \vee g(x)$.
If $(\ZZ, \zZ)$ is a measurable space, then
$b\ZZ$ is the set of $\zZ$-measurable bounded functions from $\ZZ$ to
$\RR$, with norm
    $\| f \| := \sup_{z \in \ZZ} |f(z)|$.
Given a function $\kappa \colon \ZZ \to [1, \infty)$,
the \emph{$\kappa$-weighted supremum norm} of $f \colon \ZZ \to \RR$ is
\begin{equation*}
    \| f \| _\kappa 
    := \left\| f/\kappa \right\|
    = \sup_{z \in \ZZ} \frac{|f(z)|}{\kappa(z)}.
\end{equation*}
If $\| f\|_\kappa < \infty$, we say that $f$ is \emph{$\kappa$-bounded}.
The symbol $b_\kappa \ZZ$ will denote the set of all functions from $\ZZ$ to $\RR$ that are both $\zZ$-measurable and $\kappa$-bounded. 
The pair $(b_\kappa \ZZ, \| \cdot \|_\kappa)$ forms a Banach space (see e.g., 
\cite{boud1990recursive}, page 331).

A \emph{stochastic kernel} $P$ on $(\ZZ, \zZ)$ is a map $P \colon
\ZZ \times \zZ \to [0, 1]$ such that $z \mapsto P(z, B)$ is $\zZ$-measurable
for each $B \in \zZ$ and $B \mapsto P(z, B)$ is a probability measure for each
$z \in \ZZ$.  We understand $P(z, B)$ as the probability of a state transition from 
$z \in \ZZ$ to $B \in \zZ$ in one step. 
Throughout, we let $\NN := \{1,2, \ldots \}$ and $\NN_0 := \{ 0\} \cup \NN$. 
For all $n \in \NN$,
$P^n (z, B) := \int P(z', B) P^{n-1} (z, \diff z')$ is the probability of a state 
transition from $z$ to $B \in \zZ$ in $n$ steps.
Given a $\zZ$-measurable function $h: \ZZ \rightarrow \RR$, let
\begin{equation*}
	(P^n h)(z) :=: \EE_z h(Z_n) := \int h(z') P^n(z, \diff z') \mbox{ for all } n \in \NN_0,
\end{equation*}
where $(P^0 h) (z) :=: \EE_z h (Z_0) := h(z)$. When $\ZZ$ is a Borel subset of 
$\RR^m$, a \emph{stochastic density kernel} (or \emph{density kernel}) on $\ZZ$ is 
a measurable map $f:\ZZ \times \ZZ \rightarrow \RR_+$ such that 
$\int_{\ZZ} f(z'|z) dz' = 1$ for all $ z \in \ZZ$. We say that the stochastic kernel $P$ 
\emph{has a density representation} if there exists a density kernel $f$ such that
	\begin{equation*}
		P(z, B) = 
					 \int_B f(z'|z) \diff z' 
		\mbox{ for all } z \in \ZZ \mbox{ and } B \in \mathscr{Z}.
	\end{equation*}

\subsection{Set Up}

Let $(Z_n)_{n \geq 0}$ be a time-homogeneous Markov process defined on
probability space $(\Omega, \fF, \PP)$ and taking values in 
measurable space $(\ZZ, \zZ)$.  Let $P$ denote the corresponding
stochastic kernel.  Let $\{ \fF_n\}_{n \geq 0}$ be a filtration contained in
$\fF$ such that $(Z_n)_{n \geq 0}$ is adapted to $\{ \fF_n\}_{n\geq0}$.
Let $\PP_z$ indicate probability conditioned on $Z_0 = z$, while
 $\EE_z$ is expectation conditioned on the same event.
In proofs we take $(\Omega, \fF)$ to be the canonical sequence space, so
that $\Omega = \times_{n = 0}^\infty \ZZ$ and $\fF$ is the product
$\sigma$-algebra generated by $\zZ$.\footnote{
		For the formal construction of $\PP_z$ on $(\Omega, \fF)$ given $P$ and 
		$z \in \ZZ$ see theorem~3.4.1 of \cite{meyn2012markov} or section~8.2 of 
		\cite{stokey1989}.}

A random variable $\tau$ taking values in $\NN_0$ is
called a (finite) \emph{stopping time} with respect to the filtration $\{ \fF_n\}_{n\geq0}$ if
$\PP\{\tau < \infty\} = 1$ and $\{\tau \leq n\} \in \fF_n$ for all $n \geq 0$.  Below, $\tau = n$ has the
interpretation of choosing to act at time $n$.  
Let $\mM$ denote the set of all stopping times on $\Omega$ with
respect to the filtration $\{ \fF_n\}_{n\geq0}$.  

Let $r\colon \ZZ \to \RR$ and $c\colon \ZZ \to \RR$ be measurable functions, referred to
below as the \emph{exit payoff} and \emph{flow continuation payoff},
respectively.  Consider a decision problem where, at each time $t \geq 0$, an agent
observes $Z_t$ and chooses between stopping (e.g., accepting a job, exiting a market, exercising an option) and continuing to the next stage. Stopping
generates final payoff $r(Z_t)$.  Continuing involves a
continuation payoff $c(Z_t)$ and transition to the next
period, where the agent observes $Z_{t+1}$ and the process repeats.
Future payoffs are discounted at rate $\beta \in (0, 1)$.

Let $v^*$ be the value function, which is defined at $z \in \ZZ$ by
\begin{equation}
    \label{eq:defv}
    v^*(z)
    :=
    \sup_{\tau \in \mM} 
    \EE_z
    \left\{
        \sum_{t=0}^{\tau-1} \beta^t c(Z_t) + \beta^{\tau} r(Z_{\tau}) 
    \right\}.
\end{equation}
A stopping time $\tau \in \mM$ is called an \emph{optimal stopping time}
if it attains the supremum in \eqref{eq:defv}.
A \emph{policy} is a map $\sigma$ from $\ZZ$ to $\{0, 1\}$,
with $0$ indicating the decision to continue and $1$ indicating the decision
to stop. A policy $\sigma$ is called an \emph{optimal policy} if 
$\tau^*$ defined by $\tau^* := \inf\{t \geq 0 \,|\, \sigma(Z_t) =  1\}$ is an
optimal stopping time. 

To guarantee existence of the value function and related properties without insisting that payoff functions are bounded, 
we adopt the next assumption:

\begin{assumption}
	\label{a:ubdd_drift_gel}
	There exist a  $\zZ$-measurable function $g\colon \ZZ \rightarrow \RR_+$ 
	and constants $n \in \NN_0$, $m, d \in \RR_+$ such that $\beta m < 1$, and,
	for all $z \in \ZZ$, 
    \begin{equation}
    	\label{eq:bd}
        \max \left\{ \int |r(z')| P^n (z, \diff z'), 
        					\int |c(z')| P^n (z, \diff z') \right\} 
        \leq g(z)
    \end{equation}
    and
    \begin{equation}
        \label{eq:drift}
        \int g(z') P(z, \diff z') \leq m g(z) + d.
    \end{equation}
\end{assumption} 

Note that by definition, condition \eqref{eq:bd} reduces to $|r| \vee |c| \leq g$ 
when $n=0$. The interpretation of assumption \ref{a:ubdd_drift_gel} is that both 
$\EE_z |r(Z_n)|$ and $\EE_z |c(Z_n)|$ are small relative to some function $g$ such 
that $\EE_z g(Z_t)$ does not grow too quickly.  
Slow growth in $\EE_z g(Z_t)$ is imposed by \eqref{eq:drift}, which can be understood as a geometric drift condition (see, e.g., \cite{meyn2012markov}, chapter~15).

\begin{remark}
\label{rm:suff_key_assu}
	To verify assumption~\ref{a:ubdd_drift_gel}, it suffices to obtain a 
	$\zZ$-measurable function 
	$g\colon \ZZ \rightarrow \RR_+$, 
	and constants $n \in \NN_0$, $m, d \in \RR_+$ with $\beta m < 1$, 
	and $a_1, a_2, a_3, a_4 \in \RR_+$ such that 
	$\int |r(z')| P^n (z, \diff z') \leq a_1 g(z) + a_2$, 
	$\int |c(z')| P^n (z, \diff z') \leq a_3 g(z) + a_4$ and \eqref{eq:drift} holds.
	We use this fact in the applications below. 
\end{remark}

\begin{remark}
	One can show that if assumption \ref{a:ubdd_drift_gel} holds for 
	some $n$, it must hold for all $n' \in \NN_0$ such that 
	$n' > n$. Hence, to satisfy assumption \ref{a:ubdd_drift_gel}, it suffices to find 
	a measurable map $g$ and constants $n_1, n_2 \in \NN_0$, 
	$m,d \in \RR_+$ with $\beta m<1$ such that 
	$\int |r(z')| P^{n_1} (z, \diff z') \leq g(z)$, 
	$\int |c(z')| P^{n_2} (z, \diff z') \leq g(z)$ and \eqref{eq:drift} holds.
	One can combine this result with remark \ref{rm:suff_key_assu}	to obtain more 
	general sufficient conditions.	
\end{remark}

\begin{example}
    \label{eg:js_1}
    Consider first an example with bounded rewards. Suppose,
    as in \cite{mccall1970}, that a worker can either accept a current
    wage offer $w_t$ and work permanently at that wage, or reject the offer,
    receive unemployment compensation $c_0>0$ and reconsider next period.
    Let the current wage offer be a function $w_t = w(Z_t)$ of some
    idiosyncratic or aggregate state process $(Z_t)_{t \geq 0}$.
    The exit payoff is $r(z) = u(w(z)) / (1 - \beta)$, where $u$ is a utility
    function and $\beta < 1$ is the discount factor.  The flow continuation
    payoff is $c \equiv c_0$. If $u$ is bounded, then we can set 
    $g(z) \equiv \| r \| \vee c_0$, and assumption~\ref{a:ubdd_drift_gel}
    is satisfied with $n:=0$, $m := 1$ and $d := 0$.
\end{example}

\begin{example}
    \label{eg:jsll}
    Consider now Markov state dynamics in a job search framework
    (see, e.g., \cite{lucas1974equilibrium}, \cite{jovanovic1987work}, 
    \cite{bull1988mismatch}, \cite{gomes2001equilibrium}, 
    \cite{cooper2007search}, \cite{ljungqvist2008two},
    \cite{kambourov2009occupational}, \cite{robin2011dynamics}, 
    \cite{moscarini2013stochastic}, \cite{bagger2014tenure}).
    Consider the same setting as example~\ref{eg:js_1}, with state process 
    \begin{equation}
        \label{eq:state_proc}
        Z_{t+1} = \rho Z_t + b + \epsilon_{t+1}, 
        \quad (\epsilon_t) \iidsim N(0, \sigma^2).
    \end{equation}
    The state space is $\ZZ := \RR$. We consider a typical unbounded problem and 
    provide its proof in appendix B. Let $w_t = \exp(Z_t)$ and the utility of 
    the agent be defined by the CRRA form
    \begin{equation}
    \label{eq:crra_utils}
    	u(w) = \left\{
               	 \begin{array}{ll}
                 	 \frac{w^{1-\delta}}{1- \delta}, \; \mbox{ if }
                 	 															 \delta \geq 0 
                 	 															 \mbox{ and }	
                 	 															 \delta \neq 1	 \\
                 	 \ln w, \; \mbox{ if } \delta = 1	\\
                \end{array}
            \right.
    \end{equation}
    \textit{Case I:} $\delta \geq 0$ and $\delta \neq 1$. If $\rho \in (-1,1)$, then 
    we can select an $n \in \NN_0$ that satisfies $\beta e^{|\rho^n| \xi}<1$, where 
    $\xi := |(1- \delta)b| + (1 - \delta)^2 \sigma^2 / 2$. In this case, assumption 
    \ref{a:ubdd_drift_gel} holds for 
    $g(z) := e^{\rho^n (1 - \delta) z} + 
    			  e^{\rho^n (\delta - 1) z}$ 
    and $m := d:= e^{|\rho^n| \xi}$. 
    Indeed, if $\beta e^{\xi} < 1$, 
    then assumption \ref{a:ubdd_drift_gel} holds (with $n=0$) 
    for all $\rho \in [-1,1]$.
    
    \textit{Case II:} $\delta = 1$. If $\beta |\rho|<1$, then 
    assumption~\ref{a:ubdd_drift_gel} holds with $n:=0$,  
    $g(z) := |z|$, $m := |\rho|$ and $d := \sigma + |b|$.
	Notably, since $|\rho| \geq 1$ is not excluded, wages can be nonstationary 
	provided that they do not grow too fast. 
\end{example}

\begin{remark}
Assumption \ref{a:ubdd_drift_gel} is weaker than the assumptions of existing theory. Consider the local contraction method of 
\cite{rincon2003existence}. The essence is to find a countable increasing sequence 
of compact subsets, denoted by $\{K_j \}$, such that 
$\ZZ = \cup_{j=1}^{\infty} K_j$. 
Let $\Gamma: \ZZ \rightarrow 2^{\ZZ}$ be the technological correspondence of the 
state process $( Z_t)_{t \geq 0}$ giving the set of feasible actions. To construct local contractions, one need  
$\Gamma(K_j) \subset K_j$ or $\Gamma(K_j) \subset K_{j+1}$ with probability one 
for all $j \in \NN$ (see, e.g., theorems 3--4 of \cite{rincon2003existence}, or
assumptions D1--D2 of \cite{matkowski2011discounted}). This assumption is 
often violated when $(Z_t)_{t \geq 0}$ has unbounded supports. In example 
\ref{eg:jsll}, since the AR$(1)$ state process \eqref{eq:state_proc} travels intertemporally through $\RR$ with positive probability, the local contraction method breaks down.
\end{remark}

\begin{remark}
The use of $n$-step transitions in assumption \ref{a:ubdd_drift_gel}-\eqref{eq:bd} has certain advantages. For example, if $(Z_t)_{t \geq 0}$ is mean-reverting, as time iterates forward, the initial effect tends to die out, making the conditional expectations $\EE_z |r(Z_n)|$ and $\EE_z |c(Z_n)|$ flatter than the original payoffs. As a result, finding an appropriate $g$-function with geometric drift property is much easier. Typically, in \textit{Case I} of example \ref{eg:jsll}, if $\rho \in (-1,1)$, without using future transitions (i.e., $n=0$ is imposed),\footnote{
	Indeed, our assumption in this case reduces to the standard weighted 
	supnorm assumption. See, e.g., section 4 of \cite{boud1990recursive}, or 
	assumptions 1-4 of 	\cite{duran2003discounting}.}  
one need further assumptions such as $\beta e^{\xi} < 1$ (see appendix B),
which puts nontrivial restrictions on the key parameters $\beta$ and $\delta$. 
Using $n$-step transitions, however, such restrictions are completely removed.
\end{remark}

\begin{example}
\label{eg:js_adap}
	Consider now agent's learning in a job search framework (see, 
	e.g., \cite{mccall1970}, \cite{chalkley1984adaptive}, \cite{burdett1988declining},
	\cite{pries2005hiring}, \cite{nagypal2007learning}, 
	\cite{ljungqvist2012recursive}). 
	We follow \cite{mccall1970} (section IV) and explore how the reservation 
	wage changes in response to the agent's expectation of the mean and variance of 
	the (unknown) wage offer distribution. Each period, the agent observes an 
	offer $w_t$ and decides whether to accept it or remain unemployed. The wage 
	process $(w_t)_{t \geq 0}$ follows
	\begin{equation}
	\label{eq:lm_w}
		\ln w_t =\xi + \epsilon_{t}, 
				\quad 
		(\epsilon_{t})_{t \geq 0} \iidsim N(0,\gamma_{\epsilon}),
	\end{equation}
	where $\xi$ is the mean of the wage process, which is not observed by the 
	worker, who has prior belief 
	$\xi \sim N(\mu,\gamma)$.\footnote{
		In general, $\xi$ can be a stochastic process, e.g., 
		$\xi_{t+1} = \rho \xi_{t} + \epsilon_{t+1}^{\xi}$, 
		$(\epsilon_t^{\xi} ) \iidsim N(0, \gamma_\xi)$. 
		We consider such an extension in a firm entry framework in
		section \ref{ss:fe}. } 
	The worker's current estimate of the next period wage distribution is 
	$f(w'|\mu,\gamma)=LN(\mu,\gamma+\gamma_{\epsilon})$. 
	After observing $w'$, the belief is updated, with posterior  
	$\xi|w' \sim N(\mu',\gamma')$, where
	\begin{equation}
	\label{eq:pos_js_adap}
		\gamma' 
			= \left( 
						1 / \gamma + 1 / \gamma_{\epsilon} 
				\right)^{-1} 
			\quad \mbox{and} \quad
		\mu' 
			= \gamma' \left( 
									\mu / \gamma + 
									\ln w' / \gamma_{\epsilon} 
							   \right).
	\end{equation}
	Let the utility of the worker be defined by \eqref{eq:crra_utils}. If he 
	accepts the offer, the search process terminates and a utility $u(w)$ is 
	obtained in each future period. Otherwise, the worker gets compensation
	$\tilde{c}_0>0$, updates his belief next period, and reconsiders. The state
	vector is $z = (w, \mu, \gamma) 
					   \in \RR_{++} \times \RR \times \RR_{++}
					   =: \ZZ$. 
	For any integrable function $h$, the stochastic kernel $P$ satisfies
	\begin{equation}
	\label{eq:Ph}
		\int h(z') P(z, \diff z')
			= \int 
					  h (w',\mu', \gamma') f(w'|\mu, \gamma) 
			    \diff w',
	\end{equation}
	where $\mu'$ and $\gamma'$ are defined by \eqref{eq:pos_js_adap}.
	The exit payoff is $r(w) = u(w) / (1-\beta)$, and the flow continuation payoff is
	$c \equiv c_0 := u(\tilde{c}_0)$. If $\delta \geq 0$ and $\delta \neq 1$, 
	 assumption \ref{a:ubdd_drift_gel} holds by letting $n:=1$, 
	$g(\mu, \gamma) 
		:= e^{(1 - \delta) \mu + (1 - \delta)^2 \gamma /2}$,
	$m := 1$ and $d := 0$.
	If $\delta = 1$, assumption \ref{a:ubdd_drift_gel} holds with
	$n :=1$, 
	$g(\mu, \gamma) 
		:= e^{-\mu + \gamma/2} + e^{\mu + \gamma / 2}$, 
	$m := 1$ and $d:=0$. See appendix B for a detailed proof.
\end{example}

\begin{remark}
	Since in example \ref{eg:js_adap}, the wage process $(w_t)_{t \geq 0}$ is 
	independent and has unbounded support $\RR_+$, the local contraction method 
	cannot be applied.
\end{remark}

\begin{remark}
	From \eqref{eq:Ph} we know that the conditional expectation of the reward
	functions in example \ref{eg:js_adap} is defined on a space of lower 
	dimension than the state	space. Although there are 3 states, $\EE_z |r(Z_1)|$ is a 
	function of only 2 arguments: $\mu$ and  $\gamma$. Hence, taking conditional 
	expectation makes it easier to find an appropriate $g$ function. 
	Indeed, if the standard weighted supnorm method were applied, one need to find
	a $\tilde{g}(w, \mu, \gamma)$ with geometric drift property that dominates 
	$|r|$ (see, e.g., section 4 of \cite{boud1990recursive}, or, assumptions 1--4 of 
	\cite{duran2003discounting}), which is more challenging due to the higher 
	state dimension. This type of problem is pervasive in economics. Sections 
	\ref{s:opt_pol}--\ref{s:application} provide a systematic study, along with a list of 
	applications.
\end{remark}

\subsection{The Continuation Value Operator}

The \textit{continuation value function} associated with the sequential decision problem \eqref{eq:defv} is defined at $z \in \ZZ$ by 
\begin{equation}
\label{eq:cvf}
	\psi^*(z) := c(z) + \beta \int v^*(z') P(z,dz').
\end{equation}

Under assumption \ref{a:ubdd_drift_gel}, the value function is a solution to the Bellman equation, i.e., $v^* = r \vee \psi^*$. To see this, by theorem 1.11 of \cite{peskir2006}, it suffices to show that
\begin{equation*}
	\EE_z \left( 
					\sup_{k\geq 0}
						\left| 
							\sum_{t=0}^{k-1} \beta^t c(Z_t) + \beta^k r(Z_k) 
						\right|
			  \right)
	< \infty
\end{equation*}
for all $z \in \ZZ$. This obviously holds since
\begin{equation*}
	\sup_{k\geq 0}
		\left| 
			\sum_{t=0}^{k-1} \beta^t c(Z_t) + \beta^k r(Z_k) 
		\right|
	\leq 	
		\sum_{t \geq 0} \beta^t [|r(Z_t)| + |c(Z_t)|]	
\end{equation*}
with probability one, and by lemma \ref{lm:bd_vcv} (see \eqref{eq:bdsum} 
in appendix A), the right hand side is $\PP_z$-integrable for all $z \in \ZZ$. 

To obtain some fundamental optimality results concerning the continuation value 
function, define an operator $Q$ by
\begin{equation}
    \label{eq:defq}
    Q \psi (z) = c(z) + \beta \int \max\{ r(z'), \psi(z') \} P(z, \diff z').
\end{equation}
We call $Q$ the \textit{Jovanovic operator} or the \textit{continuation value operator}. As shown below, fixed points of $Q$ are continuation value functions. From them 
we can derive value functions, optimal policies and so on. To begin with, recall
$n$, $m$ and $d$ defined in assumption \ref{a:ubdd_drift_gel}. Let $m', d' > 0$ such that $m+2m' > 1$, $\beta(m+ 2m')<1$ and $d' \geq d / (m + 2m' -1)$.
Let the weight function $\ell \colon \ZZ \to \RR$ be
\begin{equation}
\label{eq:ell_func}
    \ell(z) := 
        m' \left( \sum_{t=1}^{n-1} 
        					\EE_z |r(Z_t)| + 
        			  \sum_{t=0}^{n-1}
        			  		\EE_z |c(Z_t)| \right)
        + g(z) + d'.
\end{equation}
We have the following optimality result.
\begin{theorem}
    \label{t:bk}
    Under assumption~\ref{a:ubdd_drift_gel},
    the following statements are true:
    \begin{enumerate}
        \item[1.] $Q$ is a contraction mapping on 
       		$\left(b_\ell \ZZ, \| \cdot \|_\ell \right)$ of modulus $\beta (m + 2m')$.
        \item[2.] The unique fixed point of $Q$ in $b_\ell \ZZ$ is $\psi^*$.
        \item[3.] The policy defined by 
        	$\sigma^*(z) = \1\{r(z) \geq \psi^*(z) \}$ is an optimal policy. 
    \end{enumerate}
\end{theorem}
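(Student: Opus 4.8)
The plan is to establish the three claims in order, since each builds on the previous. For claim~1, I would first verify that $\ell$ as defined in \eqref{eq:ell_func} is a legitimate weight function, i.e.\ that $\ell \geq 1$ (which follows since $g \geq 0$ and $d' > 0$, after possibly noting $d' \geq 1$ or rescaling) and that $\ell$ is finite-valued and measurable (finiteness of the $\EE_z|r(Z_t)|$ and $\EE_z|c(Z_t)|$ terms for $t \leq n-1$ should follow from \eqref{eq:bd} together with the drift condition \eqref{eq:drift} iterated backwards, a computation presumably packaged in the lemma~\ref{lm:bd_vcv} referenced in the text). Then I would show $Q$ maps $b_\ell\ZZ$ into itself and is a contraction of modulus $\beta(m+2m')$. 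The contraction estimate is the routine part: for $\psi, \phi \in b_\ell\ZZ$,
\begin{equation*}
    |Q\psi(z) - Q\phi(z)| \leq \beta \int |\psi(z') - \phi(z')| \, P(z,\diff z') \leq \beta \|\psi - \phi\|_\ell \int \ell(z') \, P(z,\diff z'),
\end{equation*}
using $|\max\{a,\psi\} - \max\{a,\phi\}| \leq |\psi - \phi|$. The crux is therefore the \emph{drift inequality for $\ell$}: one must show $\int \ell(z')\,P(z,\diff z') \leq (m+2m')\,\ell(z)$. I expect this to be the main obstacle. Writing out $\int \ell(z')P(z,\diff z')$, the term $\int g(z')P(z,\diff z')$ is bounded by $mg(z) + d$ via \eqref{eq:drift}; the sums $\sum_{t=1}^{n-1}\EE_z|r(Z_{t+1})| + \sum_{t=0}^{n-1}\EE_z|c(Z_{t+1})|$ telescope so as to reintroduce almost the same sums shifted, with the ``new'' top term $\EE_z|r(Z_n)|$ or $\EE_z|c(Z_n)|$ absorbed by $g(z)$ through \eqref{eq:bd}, and the ``lost'' bottom term $|c(z)|$ only helping; the constant $d'$ must satisfy $d \leq (m+2m'-1)d'$, which is exactly the stipulated condition $d' \geq d/(m+2m'-1)$, so that $mg(z)+d + (\text{stuff}) + d' \leq (m+2m')(g(z)+d') + \cdots$. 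Carefully bookkeeping the $m'$ coefficients on the reward/payoff sums so that the shifted sums plus the $g$-contribution repackage into $(m+2m')\ell(z)$ is the delicate accounting step.

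For claim~2, I would show $\psi^*$ defined in \eqref{eq:cvf} lies in $b_\ell\ZZ$ and is a fixed point of $Q$; uniqueness is then immediate from the Banach fixed point theorem given claim~1. That $\psi^* \in b_\ell\ZZ$ should follow from the bound $|v^*| \leq \sum_{t\geq 0}\beta^t(\EE_\cdot|r(Z_t)| + \EE_\cdot|c(Z_t)|)$ already displayed in the text, combined with lemma~\ref{lm:bd_vcv}, yielding a bound of the form $|v^*(z)| \leq (\text{const})\,\ell(z)$ or similar; pushing this through one application of $P$ and adding $c$ keeps us in $b_\ell\ZZ$. The fixed-point identity $Q\psi^* = \psi^*$ is a direct consequence of the Bellman equation $v^* = r \vee \psi^*$ (established in the text just before the theorem): substituting $v^* = \max\{r, \psi^*\}$ into \eqref{eq:cvf} gives exactly \eqref{eq:defq} applied to $\psi^*$.

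For claim~3, I would invoke the optimal stopping theory (theorem~1.11 of \cite{peskir2006}, already cited) once more: the stopping region is $\{z : r(z) \geq \psi^*(z)\}$ because at such states the immediate exit payoff weakly dominates the value of waiting-then-continuing, and the Bellman equation $v^* = r \vee \psi^*$ together with the integrability condition already verified before the theorem statement guarantees that the first entry time into this region, $\tau^* = \inf\{t \geq 0 : r(Z_t) \geq \psi^*(Z_t)\}$, is an optimal stopping time. Hence $\sigma^*(z) = \1\{r(z) \geq \psi^*(z)\}$ is an optimal policy by definition. The only thing to check with care here is that $\tau^*$ is indeed finite $\PP_z$-a.s.\ and attains the supremum, which is exactly the content of the cited optimal stopping characterization under the integrability bound established earlier, so I would not expect difficulty here beyond correctly matching hypotheses.
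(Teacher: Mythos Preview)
Your proposal is correct and follows essentially the same route as the paper. The only difference is one of packaging: the paper invokes Boyd's weighted contraction theorem and verifies its three hypotheses (monotonicity of $Q$, $Q0 \in b_\ell\ZZ$, and the discounting condition $Q(\psi + a\ell) \leq Q\psi + a\beta(m+2m')\ell$), whereas you go directly through the Lipschitz estimate $|Q\psi - Q\phi| \leq \beta\|\psi-\phi\|_\ell \int \ell(z')P(z,\diff z')$. Both reductions land on the same drift inequality $\int \ell(z')P(z,\diff z') \leq (m+2m')\ell(z)$, and your sketch of the telescoping-plus-absorption bookkeeping for that inequality matches the paper's computation exactly (shift the sums via the Markov property, absorb the new top terms $\EE_z|r(Z_n)|, \EE_z|c(Z_n)|$ into $g$ via \eqref{eq:bd}, apply \eqref{eq:drift} to $\int g(z')P(z,\diff z')$, and use $d' \geq d/(m+2m'-1)$ to close the constant). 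Claims~2 and~3 in your outline are identical to the paper's: $\psi^* \in b_\ell\ZZ$ via lemma~\ref{lm:bd_vcv}, the fixed-point identity from $v^* = r \vee \psi^*$, and optimality of $\sigma^*$ via theorem~1.11 of Peskir--Shiryaev.
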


\begin{remark}
\label{rm:bdd_n01}
	If both $r$ and $c$ are bounded, then $\ell$ can be chosen as a constant, and 
	$Q$ is a contraction mapping of modulus $\beta$ on 
	$\left( b \ZZ, \| \cdot \| \right)$. If assumption \ref{a:ubdd_drift_gel}
	is satisfied for $n=0$, then the weight function $\ell(z) = g(z) + d'$. 
	If assumption \ref{a:ubdd_drift_gel} holds for	$n=1$, then 
	$\ell(z) = m' |c(z)| + g(z) + d'$. 
\end{remark}

\begin{example}
\label{eg:jsll_continue1}
    Recall the job search problem of example \ref{eg:jsll}.
    Let $g, n, m$ and $d$ be defined as in that example. Define $\ell$ 
    as in \eqref{eq:ell_func}. The Jovanovic operator is
    \begin{equation*}
        Q \psi(z) 
        	= c_0 + \beta \int 
        								\max \left\{ 
        													\frac{u(w(z'))}{1-\beta}, 
        											      	\psi (z') 
        										  \right\} 
          				 				 f(z'|z) 
          				 		  \diff z'.
    \end{equation*}
    Since assumption~\ref{a:ubdd_drift_gel} holds, theorem~\ref{t:bk} implies
    that $Q$ has a unique fixed point in $b_\ell \ZZ$ that coincides with 
    the continuation value function, which, in this case, can be interpreted as the 
    expected value of unemployment.
\end{example}

\begin{example}
\label{eg:js_adap_continue1}
	Recall the adaptive search model of example \ref{eg:js_adap}. Let $\ell$ be 
	defined by \eqref{eq:ell_func}. The Jovanovic operator is
	\begin{equation}
	\label{eq:cvo_jsadap}
		Q \psi (\mu, \gamma)
			= c_0 + 
				\beta \int 
								\max \left\{ 
												\frac{u(w')}{1 - \beta},
												\psi (\mu', \gamma')
										 \right\}
								f(w' | \mu, \gamma) \diff w',
	\end{equation}
	where $\mu'$ and $\gamma'$ are defined by \eqref{eq:pos_js_adap}. As shown 
	in example \ref{eg:js_adap}, assumption \ref{a:ubdd_drift_gel} holds. By 
	theorem \ref{t:bk}, $Q$ is a contraction mapping on 
	$( b_{\ell} \ZZ, \| \cdot \|_{\ell} )$
	with unique fixed point $\psi^*$, the expected value of unemployment.
\end{example}

\begin{example}
    \label{eg:perpetual_option}
    Consider an infinite-horizon American option (see, e.g.,
    \cite{shiryaev1999essentials} or \cite{duffie2010dynamic}).
    Let the state process be as in \eqref{eq:state_proc} so that the state space
    $\ZZ := \RR$. Let $p_t = p(Z_t) = \exp(Z_t)$ be the current price of the 
    underlying asset, and $\gamma >0$ be the riskless rate of return (i.e., 
    $\beta = e^{-\gamma}$).
    The exit payoff for a call option with a strike price $K$ is $r(z) = (p(z) - K)^+$, 
    while the flow continuation payoff is $c \equiv 0$. The Jovanovic 
    operator for the option satisfies
	\begin{equation*}
		Q \psi(z) 
			= e^{-\gamma} 
				\int 
					  \max \{ (p(z')-K)^+, \psi(z') \} 
					  f(z'|z) 
				\diff z'.
	\end{equation*}
    If $\rho \in (-1,1)$, we can let $\xi := |b| + \sigma^2 / 2$ and  
    $n \in \NN_0$ such that $e^{-\gamma + |\rho^n| \xi} < 1$, then assumption 
	\ref{a:ubdd_drift_gel} holds with 
	$g(z) := e^{\rho^n z} + e^{-\rho^n z}$ and $m := d:= e^{|\rho^n| \xi}$.
	Moreover, if $e^{-\gamma + \xi} <1$, then assumption  
	\ref{a:ubdd_drift_gel} holds (with $n=0$) for all $\rho \in [-1,1]$. 
	For $\ell$ as defined by \eqref{eq:ell_func},
	theorem \ref{t:bk} implies that $Q$ admits a unique fixed point in 
	$b_{\ell} \ZZ$ that coincides with $\psi^*$, the expected value of retaining the 
	option and exercising at a later stage.
	The proof is similar to that of example \ref{eg:jsll} and thus omitted.
\end{example}

\begin{example}
\label{eg:r&d}
	Firm's R$\&$D decisions are often modeled as a sequential search 
	process for better technologies (see, e.g., \cite{jovanovic1989growth}, 
	\cite{bental1996accumulation},  \cite{perla2014equilibrium}). 
	In each period, an idea with value $Z_t \in \ZZ := \RR_+$ is observed, and 
	the firm decides whether to put this idea into productive use, or develop 
	it further by investing in R$\&$D. The former choice gives a payoff 
	$r(Z_t) = Z_t$. The latter incurs a fixed cost $c_0 >0$ so as to create a new 
	technology. Let the R$\&$D process be governed by the exponential law of 
	motion (with rate $\theta>0$),
	\begin{equation}
	\label{eq:law_expo}
		F(z'|z) 
			:= \PP (Z_{t+1} \leq z' | Z_t = z) 
			= 1 - e^{ - \theta (z' - z)}
		\quad
		(z' \geq z),
	\end{equation}
	While the payoff functions are unbounded, assumption \ref{a:ubdd_drift_gel} is 
	satisfied with $n:=0$, $g(z) := z$, $m:=1$ and $d := 1/ \theta$.
	The Jovanovic operator satisfies
	\begin{equation*}
		Q \psi(z) 
		 = - c_0 + 
		 	\beta \int \max \{ z', \; \psi(z') \} \diff F(z'|z).
	\end{equation*}
	With $\ell$ as in \eqref{eq:ell_func}, $Q$ is a contraction mapping on 
	$b_{\ell} \ZZ$ with unique fixed point $\psi^*$, the expected value of investing 
	in R$\&$D.
	The proof is straightforward and omitted.
\end{example}

\begin{example}
\label{eg:firm_exit}
    Consider a firm exit problem (see, e.g., \cite{hopenhayn1992entry}, 
    \cite{ericson1995markov}, \cite{albuquerque2004optimal}, 
    \cite{asplund2006firm}, \cite{poschke2010regulation},
    \cite{dinlersoz2012information}, \cite{cocsar2016firm}). 
    Each period, a productivity shock $a_t$ is observed by an incumbent firm, where
    $a_t = a(Z_t) = e^{Z_t}$, and the state process $Z_t \in \ZZ := \RR$ is defined by 
    \eqref{eq:state_proc}. The firm then decides whether to exit the market 
    next period or not (before observing $a'$). A fixed cost $c_f >0$ is paid each 
    period by the incumbent firm. The firm's output is $q (a, l) = a l^{\alpha}$, where
    $\alpha \in (0,1)$ and $l$ is labor demand. Given output and input prices 
    $p$ and $w$, the payoff functions are
    $r(z)=c(z) = G a(z)^{\frac{1}{1 - \alpha}} - c_f$, where 
    $G = \left(\alpha p / w \right)^{\frac{1}{1-\alpha}} 
    		(1-\alpha) w / \alpha  $. 
    The Jovanovic operator satisfies
    \begin{equation*}
        Q \psi(z) 
            = \left( 
            			G a(z) ^{\frac{1}{1 - \alpha}} - c_f 
            	\right) 
            	+ \beta 
            		\int \max \left\{
               								 G a(z') ^{\frac{1}{1 - \alpha}} - c_f, \psi(z') 
          						    \right\} 
          					f(z'|z) 
          			\diff z'.
    \end{equation*}
    For $\rho \in [0,1)$, choose $n \in \NN_0$ such that 
    $\beta e^{ \rho^n \xi}<1$, where 
    $\xi := \frac{b}{1-\alpha} +
    			\frac{\sigma^2}{2 (1-\alpha)^2}$.
    Then assumption \ref{a:ubdd_drift_gel} holds with   
    $g(z) := e^{\rho^n z / (1-\alpha)}$
    and $m := d := e^{\rho^n \xi}$. Moreover, if $\beta e^{\xi}<1$, then  
    assumption \ref{a:ubdd_drift_gel} holds (with $n=0$) for all $\rho \in [0,1]$. 
    The case $\rho \in [-1, 0]$ is similar.
    By theorem \ref{t:bk}, $Q$ admits a unique 
    fixed point in $b_{\ell} \ZZ$ that corresponds to $\psi^*$, the expected value of 
    staying in the industry next period.\footnote{
     The proof is similar to that of example \ref{eg:jsll}. Here we are considering the 
     case $\rho \in [-1,0]$ and $\rho \in [0,1]$ separately. Alternatively, we can treat 
     $\rho \in [-1,1]$ directly as in examples \ref{eg:jsll} and 
     \ref{eg:perpetual_option}. As shown in the proof of example \ref{eg:jsll}, the 
     former provides a simpler $g$ function when $\rho \geq 0$. }
\end{example}

\begin{example}
\label{eg:firm_exit_j}
	Consider agent's learning in a firm exit framework (see, e.g.,
	\cite{jovanovic1982selection}, \cite{pakes1998empirical}, 
	\cite{mitchell2000scope}, \cite{timoshenko2015product}). 
	Let $q$ be firm's output, $C(q)$ a cost function, and $C(q) x$ be the total cost, 
	where the state process $(x_t)_{t \geq 0}$ satisfies
	$\ln x_t 
			= \xi + \epsilon_t, 
			(\epsilon_t) \iidsim N(0, \gamma_{\epsilon})$
	with $\xi$ denoting the firm type. 
	Beginning each period, the firm observes $x$ and decides whether to exit 
	the industry or not. The prior belief is $\xi \sim N(\mu,\gamma)$, so the 
	posterior after observing $x'$ is 	$\xi |x' \sim N(\mu', \gamma')$, where 
	$\gamma' 
			= \left( 
						1 / \gamma + 
						1 / \gamma_{\epsilon}
				\right)^{-1}$ 
	and 
	$\mu' = \gamma' \left( 
		   								\mu / \gamma + 
		   								(\ln x') / \gamma_{\epsilon}
		   					 	   \right)$.
	Let $\pi(p,x) = \underset{q}{\max} [pq - C(q)x]$ be the maximal profit, and
	$r(p,x)$ be the profit of other industries, where $p$ is price. 
	Consider, for example, $C(q) := q^2$, and 
	$(p_t)_{t \geq 0}$ satisfies 
	$\ln p_{t+1} = \rho \ln p_t + b + \epsilon^p_{t+1}$, 
	$(\epsilon^p_t)_{t \geq 0} \iidsim N(0, \gamma_p)$. Let
	$z := (p,x, \mu, \gamma) 
		  \in \RR_+^2 \times \RR \times \RR_+
		  =: \ZZ$. 
	Then the Jovanovic operator satisfies
	\begin{equation*}
		Q \psi(z) 
			= \pi(p, x) + 
			\beta \int 
						  \max \{r(p',x'), \psi(z') \} 
						  l(p', x' |p, \mu,\gamma)  
					  \diff (p', x'),
	\end{equation*}
	where $ l(p', x' |p, \mu,\gamma) 
					:= h(p'|p) f(x' | \mu,\gamma)$ with
	$h(p'|p) := LN(\rho \ln p + b, \gamma_p)$ and
	$f(x'| \mu, \gamma) 
			:= LN(\mu, \gamma+ \gamma_{\epsilon})$.   
	If $\rho \in (-1,1)$ and $|r(p,x)| \leq h_1 p^2 / x + h_2$ for some
	constants $h_1, h_2 \in \RR_+$, let $\xi := 2(|b| + \gamma_p)$ and 
	choose $n \in \NN_0$ such that $\beta e^{|\rho^n| \xi} < 1$. Define $\delta$
	such that
	$\delta \geq 
				 e^{|\rho^n| \xi} / \left( 
				 									e^{|\rho^n| \xi} - 1 
				 							  \right)$.\footnote{
	Implicitly, we are considering $\rho \neq 0$. The case $\rho = 0$ is trivial. } 
	Then assumption \ref{a:ubdd_drift_gel} holds by letting 
	$g(p,\mu,\gamma) 
		:= \left( p^{2\rho^{n}} + p^{-2\rho^{n}} + \delta \right)
			 e^{ -\mu + \gamma / 2}$,
	$m := e^{|\rho^n| \xi}$ and $d:=0$. 
	Hence, $Q$ admits a unique fixed point in $b_{\ell} \ZZ$ that equals $\psi^*$, 
	the value of staying in the industry.\footnote{
	In fact, the same result holds for more general settings, e.g., 
	$|r(p,x)| 
		\leq h_1 p^2 / x + h_2 p^2 + h_3 x^{-1} + h_4 x + h_5$ 
	for some $h_1, ..., h_5 \in \RR_+$.} 
\end{example}

\section{Properties of Continuation Values}
\label{s:properties_cv}

In this section we explore some further properties of the continuation value 
function. As one of the most significant results, $\psi^*$ is shown to be continuously differentiable under 
certain assumptions.

\subsection{Continuity}

We first develop a theory for the continuity of the fixed point. 

\begin{assumption}
\label{a:feller}
	The stochastic kernel $P$ satisfies the Feller property, i.e., $P$ maps bounded 
	continuous functions into bounded continuous functions.
\end{assumption}

\begin{assumption}
\label{a:payoff_cont}
	The functions $c$, $r$ and $z \mapsto \int |r(z')| P(z, \diff z')$ are continuous.
\end{assumption}

\begin{assumption}
\label{a:l_cont}
	The functions $\ell$ and $z \mapsto \int \ell(z') P(z, \diff z')$ are continuous.
\end{assumption}

\begin{proposition}
\label{pr:cont}
	Under assumptions \ref{a:ubdd_drift_gel} and \ref{a:feller}--\ref{a:l_cont}, 
	$\psi^*$ and $v^*$ are continuous.
\end{proposition}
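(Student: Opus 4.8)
The plan is to prove continuity of $\psi^*$ first, and then deduce continuity of $v^*$ from it. For $\psi^*$, the natural strategy is to show that the Jovanovic operator $Q$ maps the closed subspace of \emph{continuous} $\ell$-bounded functions into itself, so that the fixed point — which exists and is unique in $b_\ell \ZZ$ by Theorem \ref{t:bk} — automatically lies in this subspace. Concretely, I would let $C_\ell \ZZ := \{ f \in b_\ell \ZZ : f \text{ is continuous}\}$. Since $(b_\ell \ZZ, \|\cdot\|_\ell)$ is a Banach space and uniform ($\ell$-weighted) limits of continuous functions are continuous, $C_\ell \ZZ$ is a closed subspace, hence itself complete. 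If I can show $Q(C_\ell \ZZ) \subset C_\ell \ZZ$, then $Q$ restricted to $C_\ell \ZZ$ is still a contraction (of the same modulus $\beta(m+2m')$), its unique fixed point there coincides with the unique fixed point $\psi^*$ in the larger space, and therefore $\psi^*$ is continuous.

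The core step is thus: given $\psi \in C_\ell \ZZ$, show $z \mapsto Q\psi(z) = c(z) + \beta \int \max\{r(z'), \psi(z')\} P(z,\diff z')$ is continuous. Continuity of $c$ is Assumption \ref{a:payoff_cont}, so it remains to handle the integral term $z \mapsto \int h(z') P(z, \diff z')$ with $h := r \vee \psi$. The function $h$ is continuous (max of two continuous functions) and $\ell$-bounded (since $|r| \le |r|$, which is dominated by $g \le \ell$ after noting $\int |r(z')|P^n(z,\diff z') \le g(z)$ only bounds the $n$-step transition — here I actually need $\max\{r,\psi\}$ to be $\ell$-bounded pointwise, which follows because $\psi \in b_\ell\ZZ$ and, from \eqref{eq:ell_func}, $\ell(z) \ge g(z) \ge |r(z)|$ when $n=0$; for $n \ge 1$ one uses that $\ell$ dominates the relevant transition sums — this bookkeeping should be extracted from the proof of Theorem \ref{t:bk}). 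Given that $h$ is continuous and $|h| \le \text{const}\cdot \ell$, I would argue continuity of $z \mapsto \int h(z')P(z,\diff z')$ by a Feller-plus-domination argument: take $z_k \to z$; write $\int h \, dP(z_k, \cdot) - \int h\, dP(z,\cdot)$ and control it using Assumption \ref{a:feller} together with Assumption \ref{a:l_cont} (continuity of $z \mapsto \int \ell(z')P(z,\diff z')$), which plays the role of a generalized uniform-integrability / dominated-convergence hypothesis. The standard device is: for bounded continuous $h$ the convergence is immediate from Feller; for $\ell$-bounded continuous $h$, truncate $h$ at level $N$ via $h_N := (-N\ell)\vee(h \wedge N\ell)$ — but since $N\ell$ need not be continuous in a way that keeps $h_N$ bounded, the cleaner route is to truncate on the compact sublevel sets of $\ell$ (recall $\ell \ge 1$; if $\ell$ has compact sublevel sets this is immediate, otherwise use a continuous cutoff $\chi_N$ equal to $1$ on $\{\ell \le N\}$), estimate the tail $\int_{\{\ell > N\}} |h|\, dP(z_k,\cdot) \le \text{const} \int_{\{\ell>N\}} \ell \, dP(z_k,\cdot)$, and bound this tail uniformly in $k$ using that $z \mapsto \int \ell\, dP(z,\cdot)$ is continuous hence bounded on the convergent sequence $\{z_k\}\cup\{z\}$, forcing the tails to be small uniformly once $N$ is large. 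Then the bounded-$h$ Feller convergence finishes the main term.

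Finally, continuity of $v^*$: from \eqref{eq:cvf} and the Bellman equation $v^* = r \vee \psi^*$ established in the text, $v^*$ is the pointwise maximum of the continuous functions $r$ (Assumption \ref{a:payoff_cont}) and $\psi^*$ (just proved), hence continuous. The main obstacle I anticipate is precisely the domination argument in the integral step — making rigorous that Assumptions \ref{a:feller} and \ref{a:l_cont} together upgrade Feller continuity from bounded test functions to $\ell$-bounded continuous test functions, i.e., the tail-truncation estimate uniform along convergent sequences. Everything else is routine: the closed-subspace argument is standard, and the reduction of $v^*$ to a max of continuous functions is immediate.
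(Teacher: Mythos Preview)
Your overall architecture matches the paper exactly: show $Q$ preserves the closed subspace of continuous functions in $b_\ell\ZZ$, conclude $\psi^*$ is continuous, and then get $v^*=r\vee\psi^*$ continuous. The differences, and the gaps, are in the integral step.

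First, a genuine gap: you assert $|h|=|r\vee\psi|\le\text{const}\cdot\ell$, but this need not hold when $n\ge 1$ in Assumption~\ref{a:ubdd_drift_gel}. Look at \eqref{eq:ell_func}: the sum over $\EE_z|r(Z_t)|$ starts at $t=1$, so $|r(z)|$ itself does not appear in $\ell$, and there is no reason $|r|$ should be $\ell$-bounded. Your hedge (``for $n\ge1$ one uses that $\ell$ dominates the relevant transition sums'') does not rescue the pointwise bound you need. The correct dominant is $|r|+G\ell$ with $G=\|\psi\|_\ell$, and this is precisely why Assumption~\ref{a:payoff_cont} separately requires continuity of $z\mapsto\int|r(z')|P(z,\diff z')$ rather than folding it into Assumption~\ref{a:l_cont}.

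Second, your tail argument is not right as written: boundedness of $\int\ell\,\diff P(z_k,\cdot)$ along a convergent sequence does \emph{not} force $\int_{\{\ell>N\}}\ell\,\diff P(z_k,\cdot)$ to be uniformly small in $k$. You would need to combine Feller (applied to the bounded continuous function $\ell\wedge M$) with the assumed continuity of $\int\ell\,\diff P(z,\cdot)$ to first deduce $\int(\ell-M)^+\,\diff P(z_k,\cdot)\to\int(\ell-M)^+\,\diff P(z,\cdot)$, and only then control the tail. This is fixable but is not what you wrote.

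The paper sidesteps both issues with a cleaner device: write the nonnegative continuous functions $|r|+G\ell\pm h$, apply the generalized Fatou lemma for weakly converging probabilities (Feinberg--Kasyanov--Zadoianchuk, 2014) to each, and then subtract using the assumed continuity of $z\mapsto\int(|r(z')|+G\ell(z'))P(z,\diff z')$ from Assumptions~\ref{a:payoff_cont}--\ref{a:l_cont}. This yields directly that $\limsup\le\int h\,\diff P(z,\cdot)\le\liminf$, with no truncation needed.
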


The next result treats the special case when $P$ admits a density representation. The proof is similar to that of proposition \ref{pr:cont}, except that we use lemma \ref{lm:cont} instead of the generalized Fatou's lemma of \cite{feinberg2014fatou}  to establish continuity in \eqref{eq:fatou_eq}. In this way, notably, the continuity of $r$ is not necessary for the continuity of $\psi^*$. The proof is 
omitted.

\begin{corollary}
\label{cr:cont_dst}
	Suppose assumptions \ref{a:ubdd_drift_gel} and \ref{a:l_cont}
	hold, $P$ admits a density representation $f(z'|z)$ that is continuous in $z$, 
	and that $z \mapsto \int |r(z')| f(z'|z) \diff z'$ and $c$ are 
	continuous, then $\psi^*$ is continuous. 
	If in addition $r$ is continuous, then $v^*$ is continuous.
\end{corollary}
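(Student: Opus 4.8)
The plan is to follow the proof of Proposition~\ref{pr:cont}, replacing the generalized Fatou argument by a dominated-convergence argument adapted to density kernels. Let $cb_\ell \ZZ$ denote the set of continuous functions in $b_\ell \ZZ$. Since $\ell$ is continuous (assumption~\ref{a:l_cont}), convergence in $\| \cdot \|_\ell$ implies uniform convergence on compact sets, so $cb_\ell \ZZ$ is a closed, nonempty subset of the Banach space $(b_\ell \ZZ, \| \cdot \|_\ell)$ and hence itself complete. By theorem~\ref{t:bk}, $Q$ is a contraction on $(b_\ell \ZZ, \| \cdot \|_\ell)$ with unique fixed point $\psi^*$; so once we show $Q(cb_\ell \ZZ) \subseteq cb_\ell \ZZ$, iterating $Q$ from any element of $cb_\ell \ZZ$ and using closedness gives $\psi^* \in cb_\ell \ZZ$, i.e.\ $\psi^*$ is continuous.

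To show $Q$ preserves continuity, fix $\psi \in cb_\ell \ZZ$. We already know $Q\psi \in b_\ell \ZZ$ from theorem~\ref{t:bk}, so it remains to prove continuity. Since $c$ is continuous by hypothesis, it suffices to show that $z \mapsto \int \max\{ r(z'), \psi(z') \} f(z'|z) \diff z'$ is continuous. Using $|\max\{a,b\}| \le |a| + |b|$ together with $|\psi| \le \| \psi \|_\ell \, \ell$, the integrand is dominated, uniformly in $z$, by $G := |r| + \| \psi \|_\ell \, \ell$. By hypothesis $z \mapsto \int |r(z')| f(z'|z) \diff z'$ is continuous, and by assumption~\ref{a:l_cont} $z \mapsto \int \ell(z') f(z'|z) \diff z'$ is continuous, so $z \mapsto \int G(z') f(z'|z) \diff z'$ is continuous and finite. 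Since $f(z'|z)$ is continuous in $z$, for any $z_k \to z$ we have $f(z'|z_k) \to f(z'|z)$ pointwise, and then lemma~\ref{lm:cont} (a generalized dominated convergence theorem for density kernels, applied with the dominating functions $z' \mapsto G(z') f(z'|z_k)$, whose integrals converge to $\int G(z') f(z'|z) \diff z'$) yields $\int \max\{ r(z'), \psi(z') \} f(z'|z_k) \diff z' \to \int \max\{ r(z'), \psi(z') \} f(z'|z) \diff z'$. Hence $Q\psi$ is continuous, which completes the argument that $\psi^*$ is continuous. For $v^*$: under assumption~\ref{a:ubdd_drift_gel} the Bellman equation $v^* = r \vee \psi^*$ holds (as established in the text preceding theorem~\ref{t:bk}); if $r$ is additionally continuous, then $v^*$ is the pointwise maximum of two continuous functions and therefore continuous.

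The main obstacle is the interchange of limit and integral in the last display: the integrand $\max\{r,\psi\}$ is unbounded and, crucially, need not be continuous in $z'$, so only the density depends on the limiting variable. This is precisely what lemma~\ref{lm:cont} is designed to handle, and the only genuine work is exhibiting a dominating function $G$ whose integral against $f(\cdot|z)$ is continuous in $z$ --- obtained here by splitting $G$ into its $|r|$-part (a hypothesis of the corollary) and its $\ell$-part (assumption~\ref{a:l_cont}). Note that this is also where the improvement over Proposition~\ref{pr:cont} comes from: continuity of $r$ is never used in establishing continuity of $\psi^*$, and is needed only to pass from $\psi^*$ to $v^* = r \vee \psi^*$.
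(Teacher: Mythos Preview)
Your proof is correct and follows essentially the same route the paper outlines: mimic the proof of Proposition~\ref{pr:cont}, but replace the generalized Fatou step by an application of Lemma~\ref{lm:cont} (with $p(z',z)=\max\{r(z'),\psi(z')\}f(z'|z)$ dominated by $q(z',z)=G(z')f(z'|z)$, where $G=|r|+\|\psi\|_\ell\,\ell$), which is precisely what the paper indicates when it says the proof ``is similar to that of proposition~\ref{pr:cont}, except that we use lemma~\ref{lm:cont} instead of the generalized Fatou's lemma.'' Your closing remark that continuity of $r$ is used only for $v^*=r\vee\psi^*$, not for $\psi^*$, also matches the paper's observation.
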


\begin{remark}
\label{rm:bdd_cont}
	By proposition \ref{pr:cont}, if the payoffs $r$ and $c$ are bounded, 
	assumption \ref{a:feller} and the continuity of $r$ and $c$ are sufficient for the 
	continuity of $\psi^*$ and $v^*$.
	If in addition $P$ has a density representation $f$, by corollary \ref{cr:cont_dst},  
	the continuity of the flow payoff $c$ and $z \mapsto f(z'|z)$ (for all $z' \in \ZZ$) 
	is sufficient for $\psi^*$ to be continuous.\footnote{
		Notice that in these cases, $\ell$ can be chosen as a constant, so assumption 
		\ref{a:l_cont} holds naturally.} 
	Based on these, the continuity of $\psi^*$ and 
	$v^*$ of example \ref{eg:js_1} can be established. 
\end{remark}

\begin{remark}
	If assumption \ref{a:ubdd_drift_gel} satisfies for $n=0$ and assumption 
	\ref{a:feller} holds, then assumptions \ref{a:payoff_cont}--\ref{a:l_cont} are 
	equivalent to: $r$, $c$, $g$ and $z \mapsto \EE_z g(Z_1)$ are 
	continuous.\footnote{
		When $n=0$, $\ell(z) = g(z) + d'$, so $|r| \leq G(g + d')$ for some constant
		$G$. Since $r,g$ and $z \mapsto \EE_z g(Z_1)$ are continuous, 
		\cite{feinberg2014fatou} (theorem 1.1) implies that 
		$z \mapsto \EE_z |r(Z_1)|$ is continuous. The next claim in this
		remark can be proved similarly. } 
	If assumption \ref{a:ubdd_drift_gel} holds for $n = 1$ and assumptions 
	\ref{a:feller}--\ref{a:payoff_cont} are satisfied, then assumption \ref{a:l_cont} 
	holds if and only if $g$ and $z \mapsto \EE_z |c(Z_1)|, \EE_z g(Z_1)$ are 
	continuous. 
\end{remark}

\begin{example}
\label{eg:jsll_continue2}
Recall the job search model of examples \ref{eg:jsll} and \ref{eg:jsll_continue1}. 
By corollary \ref{cr:cont_dst}, $\psi^*$ and $v^*$ are continuous.
Here is the proof.
Assumption \ref{a:ubdd_drift_gel} holds, as was shown. $P$ has a density representation $f(z'|z) = N(\rho z + b, \sigma^2)$ that is continuous in $z$. 
Moreover, $r,c$ and $g$ are continuous. It remains to verify assumption \ref{a:l_cont}.

\textit{Case I:} $\delta \geq 0$ and $\delta \neq 1$.  
The proof of example \ref{eg:jsll} shows that 
$z \mapsto \EE_z |r(Z_t)|$ is continuous for all $t \in \NN$, and that
$z \mapsto \EE_z g(Z_1)$ is continuous (recall \eqref{eq:e_ntimes}--\eqref{eq:e_g} 
in appendix B). By the definition of $\ell$ in \eqref{eq:ell_func}, assumption 
\ref{a:l_cont} holds.

\textit{Case II:} $\delta = 1$. 
Recall that assumption \ref{a:ubdd_drift_gel} holds for $n=0$ and $g(z) = |z|$. 
Since $z \mapsto \int |z'| f(z'|z) \diff z'$ is continuous by properties of the normal 
distribution, $z \mapsto \EE_z g(Z_1), \EE_z |r(Z_1)|$ are continuous.\footnote{
		Indeed, 
		$\int |z'| f(z'|z) \diff z' 
		= \sqrt{2 \sigma^2 / \pi} \;
			e^{ -(\rho z + b)^2 / 2 \sigma^2 }
			+ (\rho z + b) \left[ 
											1 - 2 \Phi \left( -(\rho z + b) / \sigma \right) 
									\right]$, 
		where $\Phi$ is the cdf of the standard normal distribution. The continuity
		can also be proved by lemma \ref{lm:cont}. } 
Hence, assumption \ref{a:l_cont} holds.
\end{example}

\begin{example}
\label{eg:js_adap_continue2}
Recall the adaptive search model of examples \ref{eg:js_adap} and 
\ref{eg:js_adap_continue1}. Assumption \ref{a:ubdd_drift_gel} holds for $n=1$, 
as already shown. Assumption \ref{a:feller} follows from \eqref{eq:Ph} and lemma \ref{lm:cont}. Moreover, $r,c$ and $g$ are continuous. In the proof of example 
\ref{eg:js_adap}, we have shown that 
$(\mu, \gamma) 
	\mapsto \EE_{\mu, \gamma} |r(w')|,
				   \EE_{\mu, \gamma} g(\mu', \gamma')$ 
are continuous (see \eqref{eq:js_adap_er}--\eqref{eq:js_adap_eu} in appendix B),
where $\EE_{\mu, \gamma} g(\mu', \gamma')$ is defined by \eqref{eq:js_adap_eg} and
$\EE_{\mu, \gamma} |r(w')|
	:= \int |r(w')| f(w'|\mu, \gamma) \diff w'$.
Since $\ell = m'|c| + g + d'$ when $n = 1$, assumptions 
\ref{a:payoff_cont}--\ref{a:l_cont} hold. By proposition \ref{pr:cont}, $\psi^*$ and
$v^*$ are continuous.
\end{example}

\begin{example}
\label{eg:perp_option_continue1}
Recall the option pricing model of example \ref{eg:perpetual_option}.
By corollary \ref{cr:cont_dst}, we can show that
$\psi^*$ and $v^*$ are continuous. The proof is similar to example \ref{eg:jsll_continue2}, except that we use $|r(z)| \leq e^z + K$, the continuity of 
$z \mapsto \int (e^{z'} + K) f(z'|z) \diff z'$, and lemma \ref{lm:cont} to show that $z \mapsto \EE_z |r(Z_1)|$ is continuous. The continuity of 
$z \mapsto \EE_z |r(Z_t)|$ (for all $t \in \NN$) then follows from induction. 
\end{example}

\begin{example}
\label{eg:r&d_continue1}
Recall the R$\&$D decision problem of example \ref{eg:r&d}. Assumption \ref{a:ubdd_drift_gel} holds for $n=0$. For all bounded continuous function $h:\ZZ \rightarrow \RR$, lemma \ref{lm:cont} shows that 
$z \mapsto \int h(z') P(z, \diff z')$ is continuous, so assumption \ref{a:feller} holds.
Moreover, $r$, $c$ and $g$ are continuous, and
\begin{equation*}
	\int |z'| P(z, \diff z')
	= \int_{[z, \infty)} z' \theta e^{- \theta (z' - z)} \diff z'
	= z + 1 / \theta
\end{equation*}
implies that $z \mapsto \EE_z |r(Z_1)|, \EE_z|g(Z_1)|$ are continuous. Since 
$\ell(z) = g(z) + d'$ when $n=0$, assumptions \ref{a:payoff_cont}--\ref{a:l_cont}
hold. By proposition \ref{pr:cont}, $\psi^*$ and $v^*$ are continuous.
\end{example}

\begin{example}
\label{eg:firm_exit_continue1}
Recall the firm exit model of example \ref{eg:firm_exit}. Through similar analysis as in examples \ref{eg:jsll_continue2} and \ref{eg:perp_option_continue1}, we can show that $\psi^*$ and $v^*$ are continuous.
\end{example}

\begin{example}
\label{eg:fej_continue1}
Recall the firm exit model of example \ref{eg:firm_exit_j}. Assumption \ref{a:ubdd_drift_gel} holds, as was shown. The flow continuation payoff
$\pi(p,x) = p^2 / (4x)$ since $C(q) = q^2$. Recall that $z=(p,x,\mu, \gamma)$, and for all integrable $h$, we have
\begin{equation*}
	\int h(z') P(z, \diff z') 
		= \int 
				h(p',x',\mu',\gamma') 
				l(p',x'|p,\mu,\gamma) 
			\diff (p', x').
\end{equation*}
Since by definition $\gamma'$ is continuous in $\gamma$ and $\mu'$ is 
continuous in $\mu, \gamma$ and $x'$, assumption \ref{a:feller} holds by lemma 
\ref{lm:cont}. Further, induction shows that for some constant $a_t$ and all 
$t \in \NN$,
\begin{equation*}
	\int |\pi (p', x')| P^t (z, \diff z')
		= a_t p^{2 \rho^t} e^{-\mu + \gamma / 2},
\end{equation*} 
which is continuous in 
$(p, \mu, \gamma)$. 
If $r(p,x)$ is continuous, then, by lemma \ref{lm:cont} and induction (similarly as in 
example \ref{eg:perp_option_continue1}), 
$(p, \mu, \gamma) 
		\mapsto \int |r(p',x')| P^t (z, \diff z')$
is continuous for all $t \in \NN$. Moreover, $g$ is continuous and
\begin{equation*}
	\int g(p',\mu', \gamma') P(z, \diff z')
	= \left( p^{2\rho^{n+1}}  
					   e^{2 \rho^n b + 2 \rho^{2n} \gamma_p}
				+ p^{-2 \rho^{n+1}} 
					e^{-2 \rho^n b + 2 \rho^{2n} \gamma_p}	    
			+ \delta
			  \right) 
			  e^{ -\mu + \gamma / 2 }	,
\end{equation*}
which is continuous in $(p, \mu,\gamma)$. Hence, assumptions 
\ref{a:payoff_cont}--\ref{a:l_cont} hold.
Proposition \ref{pr:cont} then implies that $\psi^*$ and $v^*$ are continuous.
\end{example}

\subsection{Monotonicity} We now study monotonicity under the following assumptions.

\begin{assumption}
\label{a:c_incre}
	The flow continuation payoff $c$ is increasing (resp. decreasing).
\end{assumption}

\begin{assumption}
\label{a:mono_map}
	The function $z \mapsto \int \max \{ r(z'), \psi(z')\} P(z, \diff z')$ is increasing 
	(resp. decreasing) for all increasing (resp. decreasing) function $\psi \in b_{\ell} 
	\ZZ$.
\end{assumption}

\begin{assumption}
	\label{a:r_incre}
	The exit payoff $r$ is increasing (resp. decreasing). 
\end{assumption}

\begin{remark}
If assumption \ref{a:r_incre} holds and $P$ is stochastically increasing in the sense that $P(z, \cdot)$ (first order) stochastically dominates $P(\tilde{z}, \cdot)$ for all $\tilde{z} \leq z$, then assumption \ref{a:mono_map} holds.
\end{remark}

\begin{proposition}
\label{pr:mono}
	Under assumptions \ref{a:ubdd_drift_gel} and \ref{a:c_incre}--\ref{a:mono_map}, 
	$\psi^*$ is increasing (resp. decreasing). If in addition assumption \ref{a:r_incre} 
	holds, then $v^*$ is increasing (resp. decreasing).
\end{proposition}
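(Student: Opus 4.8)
The plan is to exploit Theorem~\ref{t:bk}, which tells us that $\psi^*$ is the unique fixed point of the contraction map $Q$ on the Banach space $(b_\ell \ZZ, \|\cdot\|_\ell)$. The strategy is the standard one for deducing qualitative properties of a fixed point: identify a nonempty closed subset of $b_\ell \ZZ$ that is invariant under $Q$ and consists of increasing functions, and then invoke the Banach fixed point theorem on that subset to conclude that $\psi^*$ itself lies in it. I would carry out the ``increasing'' case in detail, the ``decreasing'' case being entirely symmetric.

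First I would let $I$ denote the set of increasing functions in $b_\ell \ZZ$. The key observation is that $I$ is closed in $(b_\ell \ZZ, \|\cdot\|_\ell)$: from \eqref{eq:ell_func} we have $\ell \geq d' > 0$ everywhere, so $\|\cdot\|_\ell$-convergence implies pointwise convergence, and a pointwise limit of increasing functions is increasing; $I$ is also nonempty, since it contains the zero function. Next I would check that $Q(I) \subseteq I$. Fix $\psi \in I$. By part~1 of Theorem~\ref{t:bk}, $Q\psi \in b_\ell \ZZ$. By Assumption~\ref{a:mono_map}, the map $z \mapsto \int \max\{r(z'),\psi(z')\} P(z,\diff z')$ is increasing, and by Assumption~\ref{a:c_incre}, $c$ is increasing; since $\beta > 0$, it follows from \eqref{eq:defq} that $Q\psi$ is a sum of increasing functions, hence increasing, i.e.\ $Q\psi \in I$.

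With $I$ nonempty, closed and $Q$-invariant, the Banach fixed point theorem applied to the restriction $Q|_I$ yields a fixed point in $I$; by the uniqueness part of Theorem~\ref{t:bk} this fixed point is $\psi^*$, so $\psi^*$ is increasing. For the second claim, recall from the discussion preceding \eqref{eq:defq} that under Assumption~\ref{a:ubdd_drift_gel} the value function satisfies the Bellman equation $v^* = r \vee \psi^*$. If in addition Assumption~\ref{a:r_incre} holds, then $v^*$ is the pointwise maximum of the two increasing functions $r$ and $\psi^*$, hence increasing. The decreasing case follows by replacing $I$ with the closed set of decreasing functions throughout and noting that the pointwise maximum of two decreasing functions is decreasing.

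I expect no serious obstacle here; the one point that requires a moment's care is the verification that the monotone cone $I$ is closed in the weighted supremum norm, so that the contraction argument can be localized to $I$ — this rests on the strict positivity of $\ell$. Everything else is a direct application of Assumptions~\ref{a:c_incre}--\ref{a:mono_map}, which were in effect designed so that $Q$ preserves monotonicity.
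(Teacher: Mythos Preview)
Your proposal is correct and follows essentially the same approach as the paper's own proof: identify the closed subset of increasing (resp.\ decreasing) functions in $b_\ell \ZZ$, verify that $Q$ maps this subset into itself using Assumptions~\ref{a:c_incre}--\ref{a:mono_map}, and conclude via the contraction property that $\psi^*$ lies in it; then use $v^* = r \vee \psi^*$ together with Assumption~\ref{a:r_incre} for the value function. The paper's version is terser (it cites \cite{stokey1989}, corollary~1 of theorem~3.2, for the closed-subset fixed-point principle rather than spelling it out), but the logic is identical.
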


\begin{proof}[Proof of proposition~ \ref{pr:mono}]
	Let $b_{\ell} i \ZZ$ (resp. $b_{\ell} d \ZZ$) be the set of increasing (resp. 
	decreasing) functions in $b_{\ell} \ZZ$. Then $b_{\ell} i \ZZ$ (resp. $b_{\ell} d 
	\ZZ$) is a closed subset of $b_{\ell} \ZZ$.\footnote{
		Let $(\phi_n) \subset b_{\ell} i \ZZ$ such that 
		$\rho_{\ell} (\phi_n, \phi) \rightarrow 0$, then $\phi_n \rightarrow \phi$ 
		pointwise.
		Since $(b_{\ell} \ZZ, \rho_{\ell})$ is 
		complete, $\phi \in b_{\ell} \ZZ$. For all $x, y \in \ZZ$ with $x < y$,
		$\phi(x) - \phi (y) 
			= [\phi(x) - \phi_n (x)] + 
			    [\phi_n (x) - \phi_n (y)] +
				[\phi_n (y) - \phi (y)]$. 
		The second term on the right side is nonpositive, $\forall n$. Taking limit 
		supremum on both sides yields $\phi (x) \leq \phi(y)$. Hence, 
		$\phi \in b_{\ell} i \ZZ$ and $b_{\ell} i \ZZ$ is a closed subset. The case 
		$b_{\ell} d \ZZ$ is similar.} 
	To show that $\psi^*$ is increasing (resp. decreasing), it suffices to verify that 
	$Q(b_{\ell} i \ZZ) \subset b_{\ell} i \ZZ$ 
	(resp. $Q(b_{\ell} d \ZZ) \subset b_{\ell} d \ZZ$).\footnote{ 
	See, e.g., \cite{stokey1989}, corollary 1 of theorem 3.2.} 
	The assumptions of the proposition guarantee 
	that this is the case. Since, in addition, $r$ is increasing (resp. decreasing) by 
	assumption and $v^*=r \vee \psi^*$, $v^*$ is increasing (resp. decreasing).
\end{proof}

\begin{example}
\label{eg:jsll_continue3}
Recall the job search model of examples \ref{eg:jsll}, \ref{eg:jsll_continue1} and 
\ref{eg:jsll_continue2}. Assumption \ref{a:ubdd_drift_gel}, \ref{a:c_incre} and 
\ref{a:r_incre} hold. If $\rho \geq 0$, the stochastic kernel $P$ is stochastically 
increasing since the density kernel is $f(z'|z)=N(\rho z + b, \sigma^2)$, so 
assumption \ref{a:mono_map} holds. By proposition \ref{pr:mono}, $\psi^*$ and 
$v^*$ are increasing.
\end{example}

\begin{remark}
\label{rm:po_fe_rd}
Similarly, we can show that for the option pricing model of example \ref{eg:perpetual_option} and the firm exit model of example \ref{eg:firm_exit}, $\psi^*$ and $v^*$ are increasing if $\rho \geq 0$. Moreover, $\psi^*$ and $v^*$ are increasing in example \ref{eg:r&d}. The details are omitted.
\end{remark}

\begin{example}
\label{eg:js_adap_continue3}
Recall the job search model of examples \ref{eg:js_adap},
\ref{eg:js_adap_continue1} and \ref{eg:js_adap_continue2}. Note that $r(w)$ is increasing, $\mu'$ is increasing in $\mu$, and 
$f(w'|\mu, \gamma) 
	= N(\mu, \gamma + \gamma_{\epsilon})$ 
is stochastically increasing in $\mu$. So
$\EE_{\mu, \gamma} (r(w') \vee \psi(\mu', \gamma'))$ is increasing in $\mu$ for all
candidate $\psi$ that is increasing in $\mu$. Since $c \equiv c_0$, by proposition \ref{pr:mono}, $\psi^*$ and $v^*$ are increasing in $\mu$. Since $r$ is increasing 
in $w$, $v^* = r \vee \psi^*$ is increasing in $w$.
\end{example}

\begin{example}
\label{eg:fej_continue2}
Recall the firm exit model of examples \ref{eg:firm_exit_j} and 
\ref{eg:fej_continue1}. 
The flow continuation payoff $\pi(p,x) = p^2 / (4x)$ is increasing in $p$ and decreasing in $x$. 
Since $P(r \vee \psi^*)$ is not a function of $x$, $\psi^*$ is decreasing in $x$. 
If the exit payoff $r(p,x)$ is decreasing in $x$, then $v^*$ is decreasing in $x$. 
If $\rho \geq 0$ and $r(p,x)$ is increasing in $p$, since 
$h(p'|p)= LN(\rho \ln p + b, \gamma_p)$ is stochastically increasing, 
$P(r \vee \psi)$ is increasing in $p$ for all candidate $\psi$ that is increasing in $p$. By proposition \ref{pr:mono}, $\psi^*$ and $v^*$ are increasing in $p$. 
Recall that $\mu'$ is increasing in $\mu$. Since
$f(x'|\mu, \gamma) := LN(\mu, \gamma + \gamma_{\epsilon})$ is stochastically increasing in $\mu$, $P(r \vee \psi)$ is decreasing in $\mu$ for all candidate
$\psi$ that is decreasing in $\mu$. By proposition \ref{pr:mono}, $\psi^*$ and $v^*$ are decreasing in $\mu$.
\end{example}

\subsection{Differentiability}
\label{ss:diff}

Suppose $\ZZ \subset \RR^m$. For $i = 1, ..., m$, let $\ZZ^{(i)}$ be 
the $i$-th dimension and $\ZZ^{(-i)}$ the remaining $m-1$ dimensions of $\ZZ$. 
A typical element $z \in \ZZ$ takes form of $z = (z^1, ..., z^m)$. Let $z^{-i} := (z^1, ..., z^{i-1}, z^{i+1}, ..., z^m)$. Given $z_0 \in \ZZ$ and $\delta >0$, let 
$B_{\delta}(z_0^i) 
	:= \{ z^i \in \ZZ^{(i)}: 
			|z^i-z_0^i| < \delta 
		\}$ 
and $\bar{B}_{\delta}(z_0^i)$ be its closure.

Given $h: \ZZ \rightarrow \RR$, let
$D_i h(z) := \partial h(z) / \partial z^i$
and $D_i^2 h(z) := \partial^2 h(z) / \partial {(z^i)}^2$.
For a density kernel $f$, let
$D_i f(z'|z) := \partial f(z'|z) / \partial z^i$ and 
 $D_i^2 f(z'|z) := \partial^2 f(z'|z) / \partial (z^i)^2$.
Let $\mu(z) 
		:= \int 
					\max \{r(z'), \psi^*(z') \}  
					f(z'|z)
			 dz'$,
$\mu_i(z) 
		:= \int 
					\max \{r(z'), \psi^*(z')\} 
					D_i f(z'|z)
			 dz'$, and
denote $k_1 (z) := r(z)$ and $k_2 (z) := \ell(z)$.

\begin{assumption}
\label{a:c_diff}
	$D_i c(z)$ exists for all $z \in \interior (\ZZ)$ and $i=1,...,m$.
\end{assumption}

\begin{assumption}
\label{a:2nd_diff}
	$P$ has a density representation $f$, and, for $i = 1, ..., m$:
	\begin{enumerate}
		\item $D_i^2 f(z'|z)$ exits for all $(z,z')\in \interior(\ZZ) \times \ZZ$;
		\item $(z,z') \mapsto D_i f(z'|z)$ is continuous;
		\item There are finite solutions of $z^i$ to $D_i^2 f(z'|z)=0$ (denoted by 
			$z_i^* (z', z^{-i})$), and, for all $z_0 \in \interior (\ZZ)$, there exist 
			$\delta>0$ and a compact subset $A \subset \ZZ$ such that 
			$z' \notin A$ implies
			$z_i^* (z', z_0^{-i}) \notin B_{\delta}(z_0^i)$.
	\end{enumerate}
\end{assumption}

\begin{remark}
\label{rm:diff_ubdd_ss}
When the state space is unbounded above and below, for example, a sufficient condition for assumption \ref{a:2nd_diff}-(3) is: there are finite solutions 
of $z^i$ to $D_i^2 f(z'|z)=0$, and, for all $z_0 \in \interior(\ZZ)$, 
$\|z'\| \rightarrow \infty$ implies 
$|z_i^* (z', z_0^{-i})| \rightarrow \infty$.
\end{remark}

\begin{assumption}
\label{a:diff}
	 $k_j$ is continuous, and, 
		$\interior(\ZZ) \ni z \mapsto 
				\int |k_j (z') D_i f(z'|z)| \diff z' 
				\in \RR_+$	
	for $i=1,...,m$ and $j = 1, 2$.
\end{assumption}

The following provides a general result for the differentiability of $\psi^*$.

\begin{proposition}
\label{pr:diff}
	Under assumptions \ref{a:ubdd_drift_gel} and \ref{a:c_diff}--\ref{a:diff}, 
	$\psi^*$ is differentiable at interior points, with 
	$D_i \psi^* (z) = D_i c(z) + \mu_i (z)$ 
	for all $z \in \interior (\ZZ)$ and $i=1,...,m$.
\end{proposition}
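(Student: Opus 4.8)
The plan is to exploit the fixed-point characterization $\psi^* = Q\psi^*$ from Theorem \ref{t:bk}. Since $P$ has a density representation, we may write
\begin{equation*}
	\psi^*(z) = c(z) + \beta \int \max\{r(z'), \psi^*(z')\} f(z'|z) \diff z' = c(z) + \beta \mu(z).
\end{equation*}
Because $D_i c$ exists at interior points by Assumption \ref{a:c_diff}, it suffices to show that $z \mapsto \mu(z)$ is differentiable at interior points with $D_i \mu(z) = \mu_i(z)$, i.e., that we may differentiate under the integral sign. Fix $z_0 \in \interior(\ZZ)$ and a coordinate $i$. The standard tool is a dominated-convergence / Leibniz argument: for $z^i$ in a neighborhood $\bar B_\delta(z_0^i)$ of $z_0^i$ (other coordinates held at $z_0^{-i}$), form the difference quotient of $\max\{r(z'),\psi^*(z')\} f(z'|z)$ in the $i$-th coordinate. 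By the mean value theorem this equals $\max\{r(z'),\psi^*(z')\} D_i f(z'|z^i_\xi, z_0^{-i})$ for some intermediate point $z^i_\xi$; one then needs an integrable-in-$z'$ dominating function for the family $\{\,\max\{r(z'),\psi^*(z')\}\, D_i f(z'|\cdot\,,z_0^{-i}) : \cdot \in \bar B_\delta(z_0^i)\,\}$, after which dominated convergence plus continuity of $(z,z') \mapsto D_i f(z'|z)$ (Assumption \ref{a:2nd_diff}-(2)) gives differentiability and the formula. Since $\psi^* \in b_\ell\ZZ$, we have $|\max\{r(z'),\psi^*(z')\}| \leq |r(z')| \vee (\|\psi^*\|_\ell\, \ell(z')) \leq C(|k_1(z')| + |k_2(z')|)$ for a constant $C$, so it is enough to dominate $|k_j(z')|\,|D_i f(z'|z)|$ uniformly for $z^i$ near $z_0^i$, $j=1,2$.

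The crux — and the main obstacle — is producing that uniform-in-$z^i$ integrable majorant for $|k_j(z')|\,|D_i f(z'|z^i, z_0^{-i})|$ over $z^i \in \bar B_\delta(z_0^i)$. This is exactly where Assumption \ref{a:2nd_diff}-(3) enters. The idea is that for fixed $z'$, the map $z^i \mapsto |D_i f(z'|z^i, z_0^{-i})|$ on the compact interval $\bar B_\delta(z_0^i)$ attains its maximum, and since $z^i \mapsto D_i f$ is continuously differentiable with derivative $D_i^2 f$, the maximum is attained either at an endpoint of $\bar B_\delta(z_0^i)$ or at a critical point, i.e., a solution $z_i^*(z',z_0^{-i})$ of $D_i^2 f(z'|z) = 0$. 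There being only finitely many such critical solutions, and — by the separation property in \ref{a:2nd_diff}-(3) — those critical solutions lying outside $B_\delta(z_0^i)$ once $z'$ leaves a fixed compact set $A$, one concludes that for $z' \notin A$ the supremum over $z^i \in \bar B_\delta(z_0^i)$ of $|D_i f(z'|z^i,z_0^{-i})|$ is achieved at one of the two endpoints $z_0^i \pm \delta$. Hence
\begin{equation*}
	\sup_{z^i \in \bar B_\delta(z_0^i)} |k_j(z')|\,|D_i f(z'|z^i, z_0^{-i})| \leq |k_j(z')|\bigl(|D_i f(z'|z_0^i+\delta, z_0^{-i})| + |D_i f(z'|z_0^i - \delta, z_0^{-i})|\bigr)
\end{equation*}
for $z' \notin A$, and the right-hand side is integrable in $z'$ by Assumption \ref{a:diff} (applied at the two interior points $z_0^i \pm \delta$); on the compact set $A$, continuity of $(z,z') \mapsto D_i f(z'|z)$ and of $k_j$ handles the bound directly. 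Assembling the majorant over $A$ and its complement gives a single integrable dominating function, which is what the Leibniz rule requires.

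With the dominating function in hand, the rest is routine: the difference quotient $[\mu(z_0^i + h, z_0^{-i}) - \mu(z_0)]/h$ equals $\int \max\{r(z'),\psi^*(z')\} D_i f(z'|z^i_\xi(z'), z_0^{-i})\diff z'$ by the mean value theorem, the integrand is dominated uniformly in small $h$, and as $h \to 0$ we have $z^i_\xi(z') \to z_0^i$ so $D_i f(z'|z^i_\xi(z'),z_0^{-i}) \to D_i f(z'|z_0)$ pointwise by \ref{a:2nd_diff}-(2); dominated convergence then yields $D_i\mu(z_0) = \int \max\{r(z'),\psi^*(z')\} D_i f(z'|z_0)\diff z' = \mu_i(z_0)$. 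Combining with $D_i c(z_0)$ gives $D_i \psi^*(z_0) = D_i c(z_0) + \mu_i(z_0)$, as claimed. I would also remark that Assumption \ref{a:diff} is what guarantees $\mu_i$ is well defined and finite (so the stated derivative formula makes sense), and that Remark \ref{rm:diff_ubdd_ss} supplies the convenient sufficient form of \ref{a:2nd_diff}-(3) in the unbounded-state case used in the applications.
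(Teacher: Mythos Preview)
Your proposal is correct and follows essentially the same approach as the paper. The paper factors the argument through an auxiliary lemma (Lemma~\ref{lm:diff_gel}) that packages the mean-value-theorem/dominated-convergence step, and then the proof of Proposition~\ref{pr:diff} itself is devoted to verifying the integrable-majorant hypothesis of that lemma via exactly the mechanism you describe: use Assumption~\ref{a:2nd_diff}-(3) to push the critical points of $z^i \mapsto D_i f(z'|z)$ outside $\bar B_\delta(z_0^i)$ for $z' \notin A$, so that the supremum over $z^i$ is attained at the endpoints $z_0^i \pm \delta$, bound the $z' \in A$ part by a constant via continuity on the compact set $A \times \bar B_\delta(z_0^i)$, and then invoke Assumption~\ref{a:diff} at the two endpoint values of $z^i$.
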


\begin{proof}[Proof of proposition~ \ref{pr:diff}]
Fix $z_0 \in \interior (\ZZ)$. By assumption \ref{a:2nd_diff}-(3), there exist 
$\delta >0$ and a compact subset $A \subset \ZZ$ such that $z' \in A^c$ implies $z_i^*(z',z_0^{-i}) 
	\notin B_{\delta} (z_0^i)$, hence 
$\underset{z^i \in \bar{B}_{\delta}(z_0^i)}{\sup} |D_i f(z'|z)| 
= |D_i f(z'|z)|_{z^i = z_0^i + \delta}  \vee
	|D_i f(z'|z)|_{z^i = z_0^i - \delta}$ 
(given $z^{-i} = z_0^{-i}$). By assumption \ref{a:2nd_diff}-(2), 
given $z^{-i} = z_0^{-i}$, there exists  $G \in \RR_+$, such that
\begin{align*}
	\underset{
					z^i \in \bar{B}_{\delta}(z_0^i)
					}
					{\sup} |D_i f(z'|z)| 
	& \leq 
		\underset{
						z' \in A, 
						z^i \in \bar{B}_{\delta}(z_0^i)
						}
						{\sup} |D_i f(z'|z)| 
			   \cdot \1 (z' \in A)    \\
		& + \left( 
					  |D_i f(z'|z)|_{z^i = z_0^i + \delta} 
					  \vee |D_i f(z'|z)|_{z^i = z_0^i - \delta}
			   \right) 
			   \cdot \1 (z' \in A^c)    \\
	& \leq 
		G \cdot \1 (z' \in A)		\\
		& + \left( 
					  |D_i f(z'|z)|_{z^i = z_0^i + \delta} 
					  + |D_i f(z'|z)|_{z^i = z_0^i - \delta} 
			   \right) 
			   \cdot \1 (z' \in A^c). 
\end{align*}
Assumption \ref{a:diff} then shows that condition (2) of lemma \ref{lm:diff_gel} holds. By assumption \ref{a:c_diff} and lemma \ref{lm:diff_gel}, 
$D_i \psi^* (z) = D_i c(z) + \mu_i (z)$, $\forall z \in \interior(\ZZ)$, as was to be shown.
\end{proof}

\subsection{Smoothness}
Now we are ready to study smoothness (i.e., continuous differentiability), an 
essential property for numerical computation and characterizing optimal policies.

\begin{assumption}
\label{a:cont_diff_gel}
	For $i=1,...,m$ and $j=1, 2$, the following conditions hold:
	\begin{enumerate}
	\item $z \mapsto D_i c(z)$ is continuous on $\interior (\ZZ)$;
	\item $k_j$ is continuous, and, $z \mapsto \int |k_j (z') D_i f(z'|z)| \diff z'$
	 		is continuous on $\interior (\ZZ)$.
	\end{enumerate}
\end{assumption}

The next result provides sufficient conditions for smoothness.

\begin{proposition}
\label{pr:cont_diff_gel}
	Under assumptions \ref{a:ubdd_drift_gel}, \ref{a:2nd_diff} and 
	\ref{a:cont_diff_gel}, $z \mapsto D_i \psi^*(z)$ is continuous on 
	$\interior(\ZZ)$ for $i = 1,..., m$.
\end{proposition}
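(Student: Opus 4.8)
The plan is to combine the explicit derivative formula from Proposition \ref{pr:diff} with a dominated-convergence argument for continuity of the resulting integral. By Proposition \ref{pr:diff} (whose hypotheses we must first confirm follow from the current assumptions — note that assumption \ref{a:cont_diff_gel} implies assumptions \ref{a:c_diff} and \ref{a:diff}, while \ref{a:2nd_diff} is assumed directly), for each $i$ and each $z \in \interior(\ZZ)$ we have $D_i \psi^*(z) = D_i c(z) + \mu_i(z)$, where $\mu_i(z) = \int \max\{r(z'), \psi^*(z')\} D_i f(z'|z) \diff z'$. The term $D_i c$ is continuous on $\interior(\ZZ)$ by assumption \ref{a:cont_diff_gel}-(1), so it remains only to show $z \mapsto \mu_i(z)$ is continuous on $\interior(\ZZ)$.

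For the continuity of $\mu_i$, fix $z_0 \in \interior(\ZZ)$ and a sequence $z_k \to z_0$ with all $z_k$ in a small closed ball $\bar B_\delta(z_0)$ contained in $\interior(\ZZ)$. First I would bound the integrand: since $\psi^* \in b_\ell \ZZ$, there is a constant $G_0$ with $|\psi^*(z')| \leq G_0 \ell(z') = G_0 k_2(z')$, and $|r(z')| = |k_1(z')|$, so $|\max\{r(z'), \psi^*(z')\}| \leq |k_1(z')| + G_0 |k_2(z')|$ pointwise. The pointwise convergence $D_i f(z'|z_k) \to D_i f(z'|z_0)$ holds for each $z'$ by assumption \ref{a:2nd_diff}-(2). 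To get an integrable dominating function I would reuse the argument in the proof of Proposition \ref{pr:diff}: by assumption \ref{a:2nd_diff}-(3) there exist $\delta>0$ (shrink the ball if necessary) and a compact set $A \subset \ZZ$ such that for $z' \notin A$ the map $z^i \mapsto |D_i f(z'|z)|$ has no interior critical point on $\bar B_\delta(z_0^i)$, hence attains its supremum over that ball at an endpoint; combining with continuity of $D_i f$ on the compact set $A \times \bar B_\delta(z_0)$ gives
\begin{equation*}
\sup_{z \in \bar B_\delta(z_0)} |D_i f(z'|z)| \leq G \1(z' \in A) + \bigl( |D_i f(z'|z)|_{z^i = z_0^i + \delta} + |D_i f(z'|z)|_{z^i = z_0^i - \delta}\bigr)\1(z' \in A^c)
\end{equation*}
for a finite constant $G$ (this uses that $z'$-continuity of the endpoint slices is inherited from assumption \ref{a:2nd_diff}-(2)). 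Multiplying by $|k_1(z')| + G_0|k_2(z')|$ and invoking assumption \ref{a:cont_diff_gel}-(2) — which gives finiteness of $\int |k_j(z') D_i f(z'|z)| \diff z'$ evaluated at the two endpoint points $z_0^i \pm \delta$, and boundedness of $\int_A |k_j| \diff z'$ since $k_j$ is continuous on the compact set $A$ — produces a single integrable dominating function valid for all $z$ in the ball. The dominated convergence theorem then yields $\mu_i(z_k) \to \mu_i(z_0)$, so $\mu_i$, and hence $D_i\psi^*$, is continuous on $\interior(\ZZ)$.

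The main obstacle is the construction of the uniform-in-$z$ integrable dominating function, i.e. controlling $\sup_{z} |D_i f(z'|z)|$ near $z_0$ by something whose product with the $O(|k_1| + |k_2|)$ factor is integrable. This is exactly where assumption \ref{a:2nd_diff}-(3) (and its geometric reading in remark \ref{rm:diff_ubdd_ss}) does the work, by forcing the location of the critical points of $z^i \mapsto D_i f(z'|z)$ to escape any fixed ball as $\|z'\| \to \infty$, so that on the tail $\{z' \notin A\}$ the supremum over the ball reduces to the two endpoint slices, which assumption \ref{a:cont_diff_gel}-(2) controls. Everything else — the derivative formula, pointwise convergence, continuity of $D_ic$ — is immediate from the cited results and assumptions, so essentially the whole proof is the domination step, after which dominated convergence closes it.
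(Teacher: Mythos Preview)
Your outline is right --- reduce to continuity of $\mu_i$ via Proposition~\ref{pr:diff}, bound $|\max\{r,\psi^*\}|$ by $|k_1|+G_0|k_2|$, then pass to the limit in the integral --- but the domination step you sketch has a gap. Assumption~\ref{a:2nd_diff}-(3), and the endpoint argument you borrow from the proof of Proposition~\ref{pr:diff}, control $\sup_{z^i\in \bar B_\delta(z_0^i)}|D_i f(z'|z)|$ with $z^{-i}=z_0^{-i}$ \emph{fixed}. You, however, take $z_k\to z_0$ in a full $m$-dimensional ball $\bar B_\delta(z_0)$, so $z_k^{-i}$ varies as well; the endpoint terms $|D_i f(z'|z)|_{z^i=z_0^i\pm\delta}$ on your right-hand side still depend on $z^{-i}$ and are therefore not a single $z$-independent dominating function. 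Evaluating them at $z^{-i}=z_0^{-i}$ (as you do when you invoke finiteness from assumption~\ref{a:cont_diff_gel}-(2)) does not bound the supremum over the full ball. Separate continuity in $z^i$ is all this would give, which is not enough for joint continuity.

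The paper avoids this problem entirely by using Lemma~\ref{lm:cont} instead of building a uniform dominating function. With $p(z',z)=\max\{r(z'),\psi^*(z')\}\,D_i f(z'|z)$ and $q(z',z)=(|r(z')|+G\ell(z'))\,|D_i f(z'|z)|$, assumption~\ref{a:2nd_diff}-(2) gives continuity of $p$ and $q$ in $z$, and assumption~\ref{a:cont_diff_gel}-(2) gives continuity of $z\mapsto\int q(z',z)\,\diff z'$ directly (not merely finiteness at endpoints). Lemma~\ref{lm:cont} then yields continuity of $z\mapsto\int p(z',z)\,\diff z'=\mu_i(z)$ in one step. In particular, the critical-point structure of assumption~\ref{a:2nd_diff}-(3) is not needed for the continuity of $\mu_i$; it is used only inside Proposition~\ref{pr:diff} to obtain the derivative formula. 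The key observation you are missing is that assumption~\ref{a:cont_diff_gel}-(2) gives you \emph{continuity} of the integral of the majorant, which is exactly the hypothesis that makes the Fatou-based Lemma~\ref{lm:cont} work without any uniform-in-$z$ domination.
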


\begin{proof}[Proof of proposition~ \ref{pr:cont_diff_gel}]
	Since assumption \ref{a:cont_diff_gel} implies assumptions \ref{a:c_diff} and 
	\ref{a:diff}, by proposition \ref{pr:diff}, 
	$D_i \psi^*(z) = D_i c(z) + \mu_i (z)$ on $\interior(\ZZ)$.
	Since $D_i c (z)$ is continuous by assumption 
	\ref{a:cont_diff_gel}-(1), to show that $\psi^*$ is continuously differentiable, it 
	remains to verify: $z \mapsto \mu_i (z)$ is continuous on $\interior(\ZZ)$.
	Since $|\psi^*| \leq G \ell$ for some $G \in \RR_+$,
	\begin{equation}
	\label{eq:contdiff_bd}
	\left|
			\max \{
						r(z'), \psi^*(z') 
					 \} 
	  		D_i f(z'|z)
	  \right| 
	   	 \leq 
	   	 		(|r(z')| + G \ell(z')) |D_i f(z'|z)|,
	  \;
	  \forall z',z \in \ZZ.
	\end{equation} 
	By assumptions \ref{a:2nd_diff} and \ref{a:cont_diff_gel}-(2), the right side of 
	\eqref{eq:contdiff_bd} is continuous in $z$, and
	$z \mapsto 
			\int 
				[|r(z')| + G \ell(z')]|D_i f(z'|z)| 
			\diff z'$ 
	is continuous. 
	Lemma \ref{lm:cont} then implies that 
	$z \mapsto 
			\mu_i (z)
		= \int \max \{ r(z'), \psi^*(z') \} 
				   D_i f(z'|z) 
			 dz' $ 
	is 	continuous, as was to be shown.
\end{proof}

\begin{example}
\label{eg:jsll_continue4}
Recall the job search model of example \ref{eg:jsll} (subsequently studied by examples 	\ref{eg:jsll_continue1}, \ref{eg:jsll_continue2} and \ref{eg:jsll_continue3}).
For all $a \in \RR$, let 
$h(z) := e^{ a (\rho z + b)
					+ a^2 \sigma^2 / 2} 
						/ 
			 \sqrt{2 \pi \sigma^2}$.
We can show that the following statements hold:
\begin{enumerate}
	\item[(a)] There are two solutions to $\frac{\partial^2 f(z'|z)}{\partial z^2} = 0
				\colon$ 
					$z^*(z' ) = \frac{z' - b \pm \sigma}{\rho}$;
	
	\item[(b)] $\int \left| 
									\frac{\partial f(z'|z)}{\partial z}
							  \right|
					   \diff z'
					   = 
					   		\frac{|\rho|}{\sigma} \sqrt{\frac{2}{\pi}}$;
	
	\item[(c)] $\left| 
							z' \frac{\partial f(z'|z)}{\partial z}
					   \right|
					   \leq 
					   		\frac{1}{\sqrt{2 \pi \sigma^2}} 
		   							 \exp \left\{
											 	- \frac{(z' - \rho z - b)^2}{2 \sigma^2}
											  \right\}
							\left(
								\frac{|\rho|}{\sigma^2}   {z'^2} +
								\frac{|\rho(\rho z + b)|}{\sigma^2} |z'|
							\right)$;
	\item[(d)] $e^{a z'} 
					  \left|
							\frac{\partial f(z'|z)}{\partial z}
					  \right|
							\leq   h (z) 	
				 					 \exp \left\{ 
												- \frac{
															[z' - (\rho z + b + a \sigma^2)]^2
										  				   }{2 \sigma^2}
						  					  \right\}
				 					\frac{|\rho z'| + |\rho (\rho z + b)|}{\sigma^2}$,
				 	 $\forall a \in \RR$;
	\item[(e)] The four terms on both sides of (c) and (d) are continuous in $z$;
	
	\item[(f)] The integrations (w.r.t. $z'$) of the two terms on the right side of (c) 
					and (d) are continuous in $z$.				   
\end{enumerate}
Remark \ref{rm:diff_ubdd_ss} and (a) imply that assumption \ref{a:2nd_diff}-(3) 
holds. If $\delta=1$, assumption \ref{a:cont_diff_gel}-(2) holds by conditions (b), (c), (e), (f) and lemma \ref{lm:cont}. If $\delta \geq 0$ and $\delta \neq 1$, based on \eqref{eq:e_ntimes} (appendix B), conditions (b) and (d)--(f), and lemma \ref{lm:cont}, we can show that assumption 
\ref{a:cont_diff_gel}-(2) holds.
The other assumptions of proposition \ref{pr:cont_diff_gel} are easy to verify. Hence, $\psi^*$ is continuously differentiable.
\end{example}

\begin{example}
\label{eg:perp_option_continue2}
	Recall the option pricing problem of example \ref{eg:perpetual_option} 
	(subsequently studied by example \ref{eg:perp_option_continue1} and remark 
	\ref{rm:po_fe_rd}).
	Through similar analysis as in example \ref{eg:jsll_continue4}, we can show that 
	$\psi^*$ is continuously differentiable.\footnote{
		This holds even if the exit payoff $r(z) = (p(z) - K)^+$ has a kink at 
		$z = p^{-1} (K)$. Hence, the differentiability of the exit payoff is not 
		necessary for the smoothness of the continuation value. }
\end{example}

\begin{example}
Recall the firm exit model of example \ref{eg:firm_exit} (subsequently studied by example	\ref{eg:firm_exit_continue1} and remark \ref{rm:po_fe_rd}). 
Through similar analysis to examples \ref{eg:jsll_continue4}--\ref{eg:perp_option_continue2}, we can show that $\psi^*$ is continuously differentiable. Figure \ref{fig:vf_cvf} illustrates. We set  
$\beta = 0.95$, $\sigma = 1$, $b = 0$, $c_f = 5$, $\alpha=0.5$, $p=0.15$, $w=0.15$, and consider respectively $\rho = 0.7$ and $\rho = -0.7$. While $\psi^*$ is smooth, $v^*$ is kinked at around 
$z = 1.5$ when $\rho = 0.7$, and has two kinks when $\rho = -0.7$. 
\begin{figure}[h]
\centering
\begin{minipage}{.55\textwidth}
  \centering
  \includegraphics[width=1\linewidth]{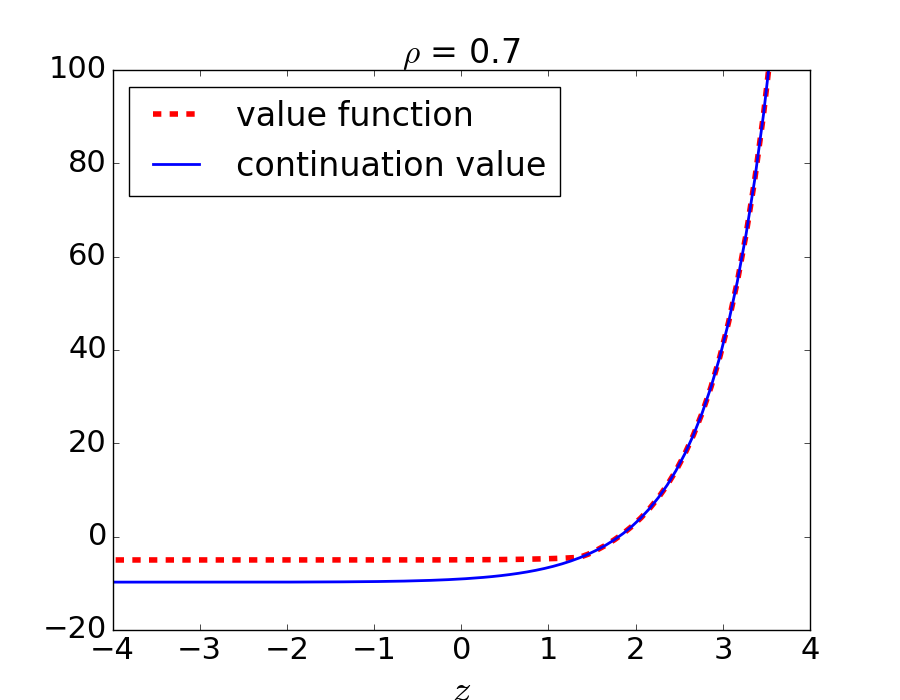}
\end{minipage}%
\begin{minipage}{.55\textwidth}
  \centering
  \includegraphics[width=1\linewidth]{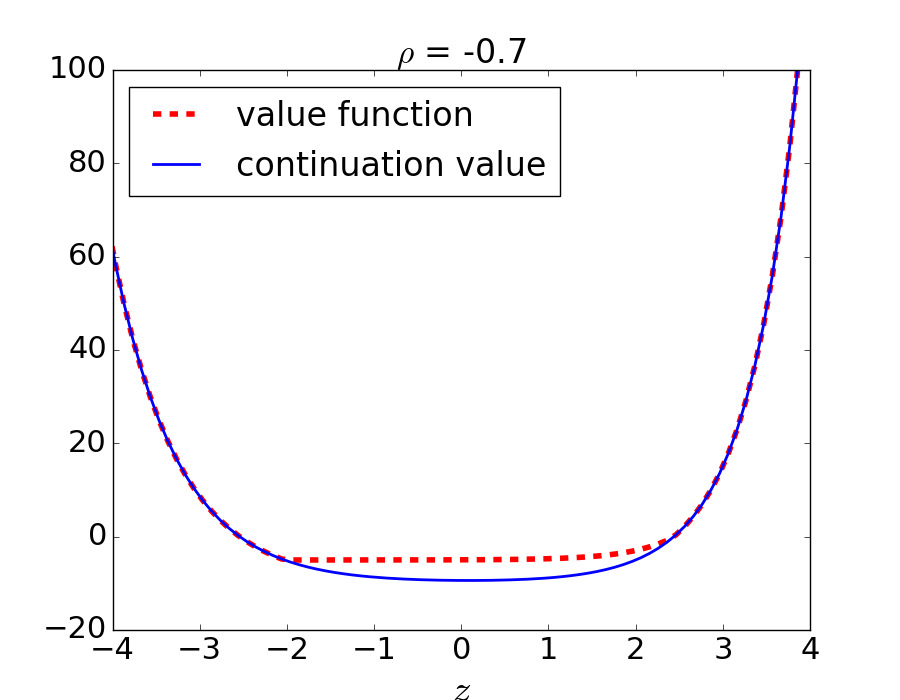}
\end{minipage}
\caption{Comparison of $\psi^*$ and $v^*$}
\label{fig:vf_cvf}
\end{figure}
\end{example}

\subsection{Parametric Continuity}


Consider the parameter space $\Theta \subset \RR^k$. Let $P_{\theta}, r_{\theta}$, $c_{\theta}$, $v^*_{\theta}$ and $\psi_{\theta}^*$ denote the stochastic kernel, exit and flow continuation payoffs, value and continuation value functions with respect to the parameter $\theta \in \Theta$, respectively. Similarly, let $n_\theta$, $m_\theta$, $d_\theta$ and $g_\theta$ denote the key elements in assumption \ref{a:ubdd_drift_gel} with respect to $\theta$. 
Define $n := \underset{\theta \in \Theta}{\sup} \;  n_{\theta}$,
$m:=\underset{\theta \in \Theta}{\sup} \; m_{\theta}$ and 
$d := \underset{\theta \in \Theta}{\sup} \; d_{\theta}$.
\begin{assumption}
\label{a:para_ubdd}
	Assumption \ref{a:ubdd_drift_gel} holds at all $\theta \in \Theta$, with
	$\beta m < 1$ and $n, d < \infty$.
\end{assumption}
Under this assumption, let $m'>0$ and $d' > 0$ such that $m + 2m'>1$, $\beta (m + 2m')<1$ and 
$d' \geq d / (m + 2m' - 1)$.
Consider $\ell: \ZZ \times \Theta \rightarrow \RR$ defined by 
\begin{equation*}
	\ell(z, \theta) := 
			m' \left( \sum_{t=1}^{n-1} \EE_z^{\theta} |r_{\theta} (Z_t)| +
						  \sum_{t=0}^{n-1} \EE_z^{\theta} |c_{\theta} (Z_t)|
				\right)
			+ g_{\theta}(z) + d',
\end{equation*}
where $\EE_z^{\theta}$ denotes the conditional expectation with respect to 
$P_{\theta} (z, \cdot)$.


\begin{remark}
We implicitly assume that $\Theta$ does not include $\beta$. However, by letting $\beta \in [0, a]$ and $a\in [0,1)$, we can incorporate $\beta$ into $\Theta$.
$\beta m < 1$ in assumption \ref{a:para_ubdd} is then replaced by $am<1$. All the
parametric continuity results of this paper remain true after this change.
\end{remark}

\begin{assumption}
\label{a:para_feller}
	$P_{\theta}(z, \cdot)$ satisfies the Feller property, i.e., 
	$(z, \theta) 
		\mapsto \int h(z') P_{\theta}(z, \diff z')$ is continuous for all bounded 
	continuous function $h: \ZZ \rightarrow \RR$.
\end{assumption}

\begin{assumption}
\label{a:para_cont} 
	$(z, \theta) \mapsto c_{\theta}(z), 
									 r_{\theta}(z), 
									 \ell(z, \theta), 
									 \int |r_{\theta} (z')| P_{\theta} (z, \diff z'),					   
									 \int \ell (z', \theta) P_{\theta} (z, \diff z')$
	are continuous.
\end{assumption}


The following result is a simple extension of proposition \ref{pr:cont}. We omit 
its proof.

\begin{proposition}
\label{pr:para_cont_gel}
	Under assumptions \ref{a:para_ubdd}--\ref{a:para_cont},
	$(z, \theta) \mapsto \psi^*_{\theta}(z), v^*_{\theta}(z)$ are continuous.
\end{proposition}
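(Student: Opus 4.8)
The plan is to lift the argument behind Proposition~\ref{pr:cont} to the product space $\ZZ \times \Theta$, treating the parameter as a frozen extra coordinate. Define an operator $\hat Q$ on functions $w \colon \ZZ \times \Theta \to \RR$ by
\begin{equation*}
	\hat Q w(z, \theta)
	:= c_\theta(z) + \beta \int \max\{ r_\theta(z'), w(z', \theta) \} \, P_\theta(z, \diff z'),
\end{equation*}
so that for each fixed $\theta$ the map $w(\cdot,\theta) \mapsto \hat Q w(\cdot, \theta)$ is exactly the Jovanovic operator $Q_\theta$ built from $(P_\theta, r_\theta, c_\theta)$. Using the remarks following assumption~\ref{a:ubdd_drift_gel} together with finiteness of $n$ in assumption~\ref{a:para_ubdd}, we may take the witnesses so that a single $n$ works for every $\theta$ (and recall $m_\theta \le m$, $d_\theta \le d$ by construction); then $\ell(\cdot,\theta)$ as defined in the statement is precisely the weight function that Theorem~\ref{t:bk} attaches to the $\theta$-indexed problem, with the common constants $m', d'$. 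By part~1 of Theorem~\ref{t:bk}, $\|Q_\theta \phi_1 - Q_\theta \phi_2\|_{\ell(\cdot,\theta)} \le \beta(m+2m')\,\|\phi_1-\phi_2\|_{\ell(\cdot,\theta)}$ for all $\phi_1,\phi_2\in b_{\ell(\cdot,\theta)}\ZZ$, uniformly in $\theta$, so taking the supremum over $\theta$ shows $\hat Q$ is a contraction of modulus $\beta(m+2m') < 1$ on the Banach space $(b_\ell(\ZZ\times\Theta), \|\cdot\|_\ell)$ (that $\hat Q$ is a self-map of this space follows from a uniform-in-$\theta$ bound on $\|Q_\theta 0\|_{\ell(\cdot,\theta)}$, which the terms defining $\ell$ supply). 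Its unique fixed point $w^*$ restricts, for each $\theta$, to a fixed point of $Q_\theta$ in $b_{\ell(\cdot,\theta)}\ZZ$, so by part~2 of Theorem~\ref{t:bk}, $w^*(z,\theta) = \psi^*_\theta(z)$.

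The substantive step is to show that $\hat Q$ carries the set $C_\ell(\ZZ\times\Theta)$ of jointly continuous $\ell$-bounded functions into itself; since $\ell$ is continuous, hence locally bounded, $\|\cdot\|_\ell$-convergence forces uniform convergence on compact sets, so $C_\ell(\ZZ\times\Theta)$ is closed in $(b_\ell(\ZZ\times\Theta), \|\cdot\|_\ell)$ and the fixed point $w^*$ then lies in it, giving joint continuity of $(z,\theta)\mapsto\psi^*_\theta(z)$. To verify the invariance, fix $w \in C_\ell(\ZZ\times\Theta)$ and a sequence $(z_k,\theta_k)\to(z,\theta)$ in $\ZZ\times\Theta$. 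The first term satisfies $c_{\theta_k}(z_k)\to c_\theta(z)$ by assumption~\ref{a:para_cont}. For the integral term, the integrand $(z',\theta)\mapsto\max\{r_\theta(z'),w(z',\theta)\}$ is jointly continuous (assumption~\ref{a:para_cont} and continuity of $w$) and is dominated by $(z',\theta)\mapsto |r_\theta(z')| + \|w\|_\ell\,\ell(z',\theta)$, whose integral $\int[\,|r_\theta(z')| + \|w\|_\ell\,\ell(z',\theta)\,]\,P_\theta(z,\diff z')$ is jointly continuous in $(z,\theta)$ by assumption~\ref{a:para_cont}. Together with the joint Feller property (assumption~\ref{a:para_feller}), which yields weak convergence $P_{\theta_k}(z_k,\cdot)\to P_\theta(z,\cdot)$, the generalized Fatou lemma of \cite{feinberg2014fatou} --- applied, exactly as in the proof of Proposition~\ref{pr:cont} at \eqref{eq:fatou_eq}, to the integrand and to its negative --- gives
\begin{equation*}
	\int \max\{ r_{\theta_k}(z'), w(z',\theta_k) \}\, P_{\theta_k}(z_k, \diff z')
	\;\longrightarrow\;
	\int \max\{ r_\theta(z'), w(z',\theta) \}\, P_\theta(z, \diff z').
\end{equation*}
Hence $\hat Q w \in C_\ell(\ZZ\times\Theta)$, so $(z,\theta)\mapsto\psi^*_\theta(z)$ is continuous; and since $v^*_\theta = r_\theta \vee \psi^*_\theta$ by the Bellman equation and $(z,\theta)\mapsto r_\theta(z)$ is continuous by assumption~\ref{a:para_cont}, $(z,\theta)\mapsto v^*_\theta(z)$ is continuous as well.

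I expect the main obstacle to be the interchange of limit and integral in the last display, where the integrand \emph{and} the integrating measure both move with $(z_k,\theta_k)$, so ordinary dominated convergence does not apply --- this is exactly the role played by the generalized Fatou lemma in Proposition~\ref{pr:cont}, and the genuinely new work is to check its domination and weak-convergence hypotheses \emph{jointly} in $(z,\theta)$ rather than in $z$ alone, which is what assumptions~\ref{a:para_feller}--\ref{a:para_cont} are designed to deliver. A secondary, bookkeeping-level obstacle is to confirm that one $n$, one pair $(m',d')$, and hence one weight function $\ell(\cdot,\cdot)$ can be made to serve all $\theta\in\Theta$ at once, so that Theorem~\ref{t:bk} yields a common contraction modulus; finiteness of $n=\sup_\theta n_\theta$ and $d=\sup_\theta d_\theta$ together with $\beta m<1$, as posited in assumption~\ref{a:para_ubdd}, is what makes this possible.
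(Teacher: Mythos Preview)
Your proposal is correct and follows exactly the route the paper intends: the paper simply states that the result ``is a simple extension of proposition~\ref{pr:cont}'' and omits the proof, and your lifting of the Jovanovic operator to $b_\ell(\ZZ\times\Theta)$ together with the same generalized Fatou/closed-subset argument is precisely that extension. The only thing worth tightening is the bookkeeping around using a single $n$ for all $\theta$, which you have already flagged and which is indeed handled by the finiteness of $\sup_\theta n_\theta$ in assumption~\ref{a:para_ubdd}.
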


\begin{example}
Recall the job search model of example \ref{eg:jsll} (subsequently studied by examples \ref{eg:jsll_continue1}, \ref{eg:jsll_continue2}, \ref{eg:jsll_continue3} and
\ref{eg:jsll_continue4}). Let the parameter space 
$\Theta := [-1, 1] \times A \times B \times C$, 
where $A, B$ are bounded subsets of $\RR_{++}, \RR$, respectively, and $C \subset \RR$. A typical element $\theta \in \Theta$ is $\theta = (\rho, \sigma, b, c_0)$. Proposition \ref{pr:para_cont_gel} implies that $(\theta, z) \mapsto \psi^*_{\theta}(z)$ and $(\theta, z) \mapsto v^*_{\theta}(z)$ are continuous. The proof is similar to
example \ref{eg:jsll_continue2} and omitted.
\end{example}

\begin{remark}
The parametric continuity of all the other examples discussed above can be 
established in a similar manner. 
\end{remark}

\section{Optimal Policies}
\label{s:opt_pol}

In this section, we provide a systematic study of optimal timing of decisions when there are threshold states, and explore the key properties of the optimal policies.

\subsection{Conditional Independence in Transitions}
\label{ss:ci}

For a broad range of problems, the continuation value function exists in a lower dimensional space than the value function. Moreover, the relationship is asymmetric. While each state variable that appears in the continuation value must appear in the value function, the converse is not true. The continuation value function can have strictly fewer arguments than the value function (recall example \ref{eg:js_adap}).

To verify, suppose that the state space $\ZZ \subset \RR^{m}$ and can be written as $\ZZ = \XX \times \YY$, where $\XX$ is a convex subset of $\RR^{m_0}$, $\YY$ is a convex subset of $\RR^{m-m_0}$, and $m_0 \in \NN$ such that $m_0 < m$.
The state process $(Z_t)_{t \geq 0}$ is then $ \{(X_t, Y_t)\}_{t \geq 0}$, where $(X_t)_{t \geq 0}$ and $(Y_t)_{t \geq 0}$ are two stochastic processes taking values in $\XX$ and $\YY$, respectively. In particular, for each $t \geq 0$, $X_t$ represents the first $m_0$ dimensions and $Y_t$ the rest $m-m_0$ dimensions of the period-$t$ state $Z_t$. 

Assume that the stochastic processes $(X_t)_{t \geq 0}$ and $(Y_t)_{t \geq 0}$ are 
\textit{conditionally independent}, in the sense that conditional on each $Y_t$, the 
next period states $(X_{t+1},Y_{t+1})$ and $X_t$ are independent. 
Let $z := (x,y)$ and $z' := (x',y')$ be the current and next period states, 
respectively. With conditional independence, the stochastic kernel 
$P(z, \diff z')$ can be represented by the conditional distribution of $(x',y')$ on $y$, 
denoted as $\mathbb{F}_y (x',y')$, i.e., 
$P(z,\diff z') = P((x,y),\diff (x',y')) = \diff \mathbb{F}_y (x',y')$. 

Assume further that the flow continuation payoff  $c$ is defined 
on $\YY$, i.e., $c: \YY \rightarrow \RR$.\footnote{
		Indeed, in many applications, the flow payoff $c$ is a constant, as seen in 
		previous examples.} 
Under this setup, $\psi^*$ has strictly fewer arguments than $v^*$. While $v^*$ is a function of both $x$ and $y$, $\psi^*$ is a function of $y$ only. Hence, the continuation value based method allows us to mitigate one of the primary stumbling 
blocks for numerical dynamic programming: the so-called curse of dimensionality 
(see, e.g., \cite{bellman1969new}, \cite{rust1997using}).

\subsection{The Threshold State Problem}
\label{ss:tsp}

Among problems where conditional independence exists, the optimal policy is 
usually determined by a reservation rule, in the sense that the decision process 
terminates whenever a specific state variable hits a threshold level. In such cases, 
the continuation value based method allows for a sharp analysis of the optimal 
policy. This type of problem is pervasive in quantitative and theoretical economic 
modeling, as we now formulate.

For simplicity, we assume that $m_0 = 1$, in which case $\XX$ is a convex subset of $\RR$ and $\YY$ is a convex subset of $\RR^{m-1}$. For each $t \geq 0$, $X_t$ represents the first dimension and $Y_t$ the rest $m-1$ dimensions of the period-$t$ state $Z_t$. If, in addition, $r$ is monotone on $\XX$, we call $X_t$ the 
\textit{threshold state} and $Y_t$ the \textit{environment state} (or 
\textit{environment}) of period $t$, moreover, we call $\XX$ the \textit{threshold 
state space} and $\YY$ the \textit{environment space}. 

\begin{assumption}
\label{a:opt_pol}
	$r$ is strictly monotone on $\XX$. Moreover, for all $y\in \YY$, there exists 
	$x\in \XX$ such that $r(x,y) = c(y) + \beta \int v^*(x',y') \diff 
	\mathbb{F}_y(x',y')$.
\end{assumption}

Under assumption \ref{a:opt_pol}, the \textit{reservation rule property} holds.  When the exit payoff $r$ is strictly increasing in $x$, for instance, this property states that
if the agent terminates at $x \in \XX$ at a given point of time, then he would have terminated at any higher state at that moment. Specifically, there is a \textit{decision threshold} $\bar{x}:\YY \rightarrow \XX$ such that when $x$ attains this threshold level, i.e., $x = \bar{x}(y)$, the agent is indifferent between stopping and continuing, i.e., $r(\bar{x}(y), y) = \psi^*(y)$ for all $y \in \YY$. 

As shown in theorem \ref{t:bk}, the optimal policy satisfies 
$\sigma^*(z) = \1 \{r(z) \geq \psi^*(z)\}$. For a sequential decision 
problem with threshold state, this policy is fully specified by the decision threshold $\bar{x}$. In particular, under assumption \ref{a:opt_pol}, we have
\begin{equation}
\label{eq:res_rule_pol}
    \sigma^*(x,y)
    	 = \left\{
               	 \begin{array}{ll}
                 	 \1 \{ x \geq \bar{x}(y) \}, \; 
                 	 \mbox{ if $r$ is strictly increasing in $x$}
					\\
                 	 \1 \{ x \leq \bar{x}(y)\}, \; 
                 	 \mbox{ if $r$ is strictly decreasing in $x$} 	\\
                \end{array}
            \right.
\end{equation}

Further, based on properties of the continuation value, properties of the decision 
threshold can be established. The next result provides sufficient conditions for 
continuity. The proof is similar to proposition \ref{pr:pol_para_cont} below and thus 
omitted.

\begin{proposition}
\label{pr:pol_cont}
	Suppose either assumptions of proposition \ref{pr:cont} or of
	corollary \ref{cr:cont_dst} hold, and that assumption \ref{a:opt_pol} holds. 
	Then $\bar{x}$ is continuous.
\end{proposition}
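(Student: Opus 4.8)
The plan is to express the decision threshold $\bar x$ implicitly through the equation $r(\bar x(y), y) = \psi^*(y)$ and then invoke continuity of $\psi^*$ (from Proposition~\ref{pr:cont} or Corollary~\ref{cr:cont_dst}) together with strict monotonicity of $r$ in $x$ (Assumption~\ref{a:opt_pol}) to pass continuity from the right-hand side to $\bar x$. Concretely, fix $y_0 \in \YY$ and a sequence $y_k \to y_0$ in $\YY$; I want to show $\bar x(y_k) \to \bar x(y_0)$. Write $x_k := \bar x(y_k)$. Since $\psi^*$ is continuous (this is exactly what the cited hypotheses give, noting also that $v^*$ is continuous so the defining equation in Assumption~\ref{a:opt_pol} makes sense and $\psi^* = c + \beta P v^*$ inherits continuity), we have $r(x_k, y_k) = \psi^*(y_k) \to \psi^*(y_0) = r(\bar x(y_0), y_0)$.

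The key steps, in order, are: (i) record that $\bar x$ is well defined and single-valued — existence of a solution $x$ to $r(x,y) = \psi^*(y)$ is given by Assumption~\ref{a:opt_pol}, and uniqueness follows from strict monotonicity of $r(\cdot, y)$; (ii) establish that the sequence $(x_k)$ stays in a bounded region, so that it has convergent subsequences with limits in $\cl(\XX)$ — here one uses that $r(x_k,y_k)$ converges (hence is bounded) together with the fact that, along any escaping subsequence, strict monotonicity of $r$ in $x$ combined with continuity of $r$ at $(\cdot, y_0)$ would force $r(x_k,y_k)$ away from the finite limit $r(\bar x(y_0),y_0)$, a contradiction; (iii) take any subsequential limit $x_\infty$ of $(x_k)$, use joint continuity of $r$ to get $r(x_\infty, y_0) = \lim r(x_k,y_k) = r(\bar x(y_0), y_0)$, and conclude $x_\infty = \bar x(y_0)$ by strict monotonicity; (iv) since every subsequence of $(x_k)$ has a further subsequence converging to $\bar x(y_0)$, the whole sequence converges, giving continuity of $\bar x$ at $y_0$.

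I expect step~(ii) — the ``no escape to the boundary'' argument — to be the main obstacle, because $\XX$ is only assumed to be a convex (hence possibly unbounded, possibly open) subset of $\RR$, so a priori $x_k$ could drift to $\pm\infty$ or to a boundary point of $\XX$ not attained by $\bar x(y_0)$. The way to handle this cleanly is to use strict monotonicity quantitatively near $y_0$: by continuity of $r$ one can trap $\psi^*(y_k)$ in a compact interval for large $k$, and then strict monotonicity of $r(\cdot, y_0)$ on $\XX$, together with continuity of $r$ in $y$ uniformly on a suitable compact $x$-range, confines all $x_k$ (for large $k$) to a fixed compact subinterval of $\XX$ on which $r(\cdot, y)$ stays monotone and onto the relevant range. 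Once this localization is in hand the remaining steps are routine. Since the proof is essentially identical in structure to that of Proposition~\ref{pr:pol_para_cont} below (with the parameter $\theta$ suppressed), it is omitted there and only this sketch is needed.
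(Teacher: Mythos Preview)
Your approach is correct but takes a more roundabout route than the paper. The paper's proof (given for Proposition~\ref{pr:pol_para_cont} and referred back to here) is a direct $\epsilon$--$\delta$ argument: define $F(x,y):=r(x,y)-\psi^*(y)$, note that $F$ is continuous and strictly monotone in $x$, and for given $y_0$ and $\epsilon>0$ observe that $F(\bar x(y_0)+\epsilon,y_0)>0$ and $F(\bar x(y_0)-\epsilon,y_0)<0$; continuity in $y$ then gives a $\delta$-neighborhood on which these strict inequalities persist, and strict monotonicity in $x$ forces $\bar x(y)\in(\bar x(y_0)-\epsilon,\bar x(y_0)+\epsilon)$ for all $y$ in that neighborhood. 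This bracketing argument \emph{is} the localization you describe at the end of your step~(ii), and once you have it the conclusion is immediate---there is no need for the subsequence/compactness machinery of steps~(iii)--(iv).

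In other words, the ``no escape to the boundary'' obstacle you flag dissolves once you bracket at $\bar x(y_0)\pm\epsilon$ rather than argue by contradiction via escaping subsequences. Your first heuristic for step~(ii) (``strict monotonicity \dots\ would force $r(x_k,y_k)$ away from the finite limit'') is not by itself enough, since strict monotonicity does not preclude $r(\cdot,y)$ being bounded; it is precisely the bracketing version that makes the argument go through, and at that point you have reproduced the paper's proof. So your proposal is sound, but the paper's route is shorter and avoids the detour.
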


The next result discusses monotonicity. The proof is obvious and we omit it.  

\begin{proposition}
\label{pr:pol_mon}
	Suppose assumptions of proposition \ref{pr:mono} and assumption 
	\ref{a:opt_pol} hold, and that $r$ is defined on $\XX$. If $\psi^*$ is increasing 
	and $r$ is strictly increasing (resp. decreasing), then $\bar{x}$ is increasing (resp. 
	decreasing). If $\psi^*$ is decreasing and $r$ is strictly increasing (resp. 
	decreasing), then $\bar{x}$ is decreasing (resp. increasing).
\end{proposition}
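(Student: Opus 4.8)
The plan is to use the explicit representation of the decision threshold as a composition, $\bar{x} = r^{-1} \circ \psi^*$, and then invoke the elementary fact that a composition of monotone maps is monotone, with the direction determined by the signs of the two component monotonicities.

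First I would recall that, in the threshold state setup of Section~\ref{ss:tsp}, the conditional independence assumption together with $c$ being a function of $y$ alone makes $\psi^*$ a function of the environment $y$ only: from \eqref{eq:cvf} and the representation $P(z, \diff z') = \diff \mathbb{F}_y(x', y')$, one has $\psi^*(x,y) = c(y) + \beta \int v^*(x',y') \diff \mathbb{F}_y(x',y')$, whose right-hand side does not depend on $x$. Next, since $r$ is strictly monotone on the convex set $\XX \subseteq \RR$, it is injective there, so its inverse $r^{-1}$ is well defined and strictly monotone of the same type on the range $r(\XX)$. Assumption~\ref{a:opt_pol} guarantees both that $\psi^*(y) \in r(\XX)$ for every $y \in \YY$ and that the decision threshold $\bar{x}(y)$ is the unique $x \in \XX$ solving $r(x) = \psi^*(y)$; therefore $\bar{x}(y) = r^{-1}(\psi^*(y))$ for all $y \in \YY$.

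The remainder is a case check. Fix $y_1, y_2 \in \YY$ with $y_1 \leq y_2$ in the componentwise order, so that $\psi^*(y_1) \leq \psi^*(y_2)$ when $\psi^*$ is increasing and $\psi^*(y_1) \geq \psi^*(y_2)$ when $\psi^*$ is decreasing (this monotonicity of $\psi^*$ being supplied by Proposition~\ref{pr:mono} under the stated hypotheses). Applying $r^{-1}$, which preserves order when $r$ is strictly increasing and reverses it when $r$ is strictly decreasing, yields the four combinations: if $\psi^*$ and $r$ have the same monotonicity type then $\bar{x}$ is increasing, and if they have opposite types then $\bar{x}$ is decreasing. These are precisely the four assertions of the proposition.

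I expect no genuine obstacle; the author's remark that the proof is obvious is accurate. The only point that repays a moment of care is the legitimacy of writing $\bar{x} = r^{-1} \circ \psi^*$, i.e.\ the existence and uniqueness of the threshold together with $\psi^*$ being $y$-measurable only, and these are exactly what the conditional independence structure of Section~\ref{ss:tsp} and Assumption~\ref{a:opt_pol} (via the reservation rule discussion preceding the proposition) deliver. No continuity of $r$ or $r^{-1}$ is needed, since monotonicity of a composition follows from monotonicity of the factors alone.
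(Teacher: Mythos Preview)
Your proposal is correct and is precisely the elementary argument one would expect; the paper itself omits the proof, stating only that it is obvious. Your decomposition $\bar{x}=r^{-1}\circ\psi^*$ (valid here because the hypothesis ``$r$ is defined on $\XX$'' makes $r$ a function of $x$ alone) together with the four-case sign check is the natural route, and your remarks on why Assumption~\ref{a:opt_pol} and the conditional-independence structure legitimize this representation are accurate.
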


A typical element $y \in \YY$ is $y = \left( y^1, ..., y^{m-1} \right)$. For given 
functions $h: \YY \rightarrow \RR$ and $l: \XX \times \YY \rightarrow \RR$, 
define $D_i h(y) := \partial h(y) / \partial y^i$, 
$D_i l(x,y) := \partial l(x,y) / \partial y^i$, and 
$D_x l(x,y) := \partial l(x,y) / \partial x$.
The next result follows immediately from proposition \ref{pr:cont_diff_gel} and the implicit function theorem.

\begin{proposition}
\label{pr:pol_diff}
	Suppose assumptions of proposition \ref{pr:cont_diff_gel} and 
	assumption \ref{a:opt_pol} hold, and that $r$ is continuously differentiable on 
	$\interior (\ZZ)$. Then $\bar{x}$ is continuously differentiable on 
	$\interior (\YY)$. In particular, 
	$D_i \bar{x}(y) 
			= - \frac{
							D_i r(\bar{x}(y),y) - D_i \psi^*(y)
						  }{
						    D_x r(\bar{x}(y),y)
						   }$ 
	for all $y \in \interior (\YY)$.
\end{proposition}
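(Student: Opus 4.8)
The plan is to combine the smoothness of $\psi^*$ (already available from Proposition \ref{pr:cont_diff_gel}) with the implicit function theorem applied to the indifference equation defining $\bar x$. Under Assumption \ref{a:opt_pol}, the decision threshold $\bar x : \YY \to \XX$ is characterized by the identity $r(\bar x(y), y) = \psi^*(y)$ for all $y \in \YY$. Define $\Phi : \interior(\XX) \times \interior(\YY) \to \RR$ by $\Phi(x,y) := r(x,y) - \psi^*(y)$. Then $\Phi(\bar x(y), y) = 0$ on $\interior(\YY)$, and the goal is to show that $\Phi$ is $C^1$ with $D_x \Phi$ nonvanishing at the relevant points, so that the implicit function theorem delivers both the continuous differentiability of $\bar x$ and the stated formula for $D_i \bar x(y)$.

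First I would record that $\Phi$ is continuously differentiable on $\interior(\ZZ) = \interior(\XX \times \YY)$: the term $r$ is $C^1$ there by hypothesis, and $y \mapsto \psi^*(y)$ is $C^1$ on $\interior(\YY)$ by Proposition \ref{pr:cont_diff_gel} (note that under the conditional-independence setup of Section \ref{ss:ci}, $\psi^*$ depends only on $y$, and $\interior(\ZZ)$ projects onto $\interior(\YY)$ in the relevant coordinates because $\XX,\YY$ are convex). Hence $D_x \Phi(x,y) = D_x r(x,y)$ and $D_i \Phi(x,y) = D_i r(x,y) - D_i \psi^*(y)$, both continuous. Next, since $r$ is strictly monotone in $x$ on $\XX$ (Assumption \ref{a:opt_pol}) and $C^1$, its partial derivative $D_x r(\bar x(y), y)$ is nonzero for each $y \in \interior(\YY)$ — this is the one point that needs a small argument, because strict monotonicity of a $C^1$ function does not by itself force the derivative to be nonzero at every point; one uses that $r$ is strictly monotone \emph{and} that $\bar x(y)$ is the unique crossing point, or simply assumes (as is implicit in the reservation-rule setup and the statement's formula, which divides by $D_x r$) that $D_x r \ne 0$. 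With $D_x \Phi(\bar x(y), y) \ne 0$, the implicit function theorem applies at each $y_0 \in \interior(\YY)$ and yields a $C^1$ local solution agreeing with $\bar x$ by uniqueness; patching over $\interior(\YY)$ gives $\bar x \in C^1(\interior(\YY))$.

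Finally, differentiating the identity $\Phi(\bar x(y), y) = 0$ with respect to $y^i$ gives $D_x r(\bar x(y), y)\, D_i \bar x(y) + D_i r(\bar x(y), y) - D_i \psi^*(y) = 0$, whence
\begin{equation*}
	D_i \bar x(y) = -\,\frac{D_i r(\bar x(y), y) - D_i \psi^*(y)}{D_x r(\bar x(y), y)}
	\quad\text{for all } y \in \interior(\YY),
\end{equation*}
as claimed. The main obstacle is the nondegeneracy condition $D_x r(\bar x(y), y) \ne 0$: everything else is a routine invocation of Proposition \ref{pr:cont_diff_gel} and the implicit function theorem, but without $D_x r \ne 0$ the implicit function theorem fails and the quotient formula is undefined, so this is where the strict-monotonicity hypothesis in Assumption \ref{a:opt_pol} (together with differentiability of $r$) must be used carefully — or noted as an implicit regularity requirement built into the statement.
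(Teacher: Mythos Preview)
Your approach is correct and matches the paper's own, which simply states that the result ``follows immediately from proposition \ref{pr:cont_diff_gel} and the implicit function theorem'' without further detail. You have in fact been more careful than the paper by flagging the nondegeneracy requirement $D_x r(\bar x(y), y) \neq 0$, which the paper leaves implicit (and which is indeed presupposed by the stated formula).
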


Intuitively, $(x,y) \mapsto r(x ,y) - \psi^*(y)$ denotes the premium of terminating 
the decision process. Hence, 
$(x,y) \mapsto D_i r(x,y) - D_i \psi^*(y), D_x r(x,y)$ are the instantaneous rates of change of the terminating premium in response to changes in $y^i$ and $x$, respectively. Holding aggregate premium null, the premium changes due to changes in $x$ and $y$ cancel out. As a result, the rate of change of $\bar{x}(y)$ with respect to changes in $y^i$ is equivalent to the ratio of the instantaneous rates of change in the premium. The negativity is due to zero terminating premium at the decision threshold.

Let $\bar{x}_\theta$ be the decision threshold with respect to $\theta \in \Theta$. We have the following result for parametric continuity.

\begin{proposition}
\label{pr:pol_para_cont}
	Suppose assumptions of proposition \ref{pr:para_cont_gel} and 
	assumption \ref{a:opt_pol} hold. Then $(y,\theta) \mapsto \bar{x}_{\theta}(y)$ 
	is continuous.
\end{proposition}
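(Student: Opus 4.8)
The plan is to view $\bar{x}_\theta(y)$ as the unique solution in $x$ of the scalar equation $r_\theta(x,y)=\psi^*_\theta(y)$ --- uniqueness coming from the strict monotonicity of $r_\theta$ on $\XX$ in Assumption \ref{a:opt_pol} --- and to prove that this implicitly defined solution depends continuously on $(y,\theta)$ by an elementary monotonicity argument, with no appeal to differentiability or compactness. The only inputs needed are: (i) joint continuity of $(x,y,\theta)\mapsto r_\theta(x,y)$, which is part of Assumption \ref{a:para_cont}; and (ii) joint continuity of $(y,\theta)\mapsto\psi^*_\theta(y)$, which is precisely Proposition \ref{pr:para_cont_gel} (here $\psi^*_\theta$ is a function of $y$ alone, by the conditional independence in force in this section). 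Since differentiability is not used, the same argument also yields Proposition \ref{pr:pol_cont}.

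Without loss of generality assume $r_\theta$ is strictly increasing on $\XX$ for each $\theta$; the decreasing case is symmetric. Fix $(y_0,\theta_0)\in\YY\times\Theta$, set $x_0:=\bar{x}_{\theta_0}(y_0)$, and let $\epsilon>0$. Suppose first that $x_0\in\interior(\XX)$, so that $x_0\pm\epsilon\in\XX$ for $\epsilon$ small. Strict monotonicity at $(y_0,\theta_0)$ gives $r_{\theta_0}(x_0-\epsilon,y_0)<\psi^*_{\theta_0}(y_0)<r_{\theta_0}(x_0+\epsilon,y_0)$, so there is $\delta>0$ with $r_{\theta_0}(x_0-\epsilon,y_0)\le\psi^*_{\theta_0}(y_0)-\delta$ and $r_{\theta_0}(x_0+\epsilon,y_0)\ge\psi^*_{\theta_0}(y_0)+\delta$. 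By (i) and (ii), the strict inequalities $r_\theta(x_0-\epsilon,y)<\psi^*_\theta(y)<r_\theta(x_0+\epsilon,y)$ persist on a neighbourhood $U$ of $(y_0,\theta_0)$. For each such $(y,\theta)$ the map $x\mapsto r_\theta(x,y)$ is strictly increasing and satisfies $r_\theta(\bar{x}_\theta(y),y)=\psi^*_\theta(y)$, so comparing values forces $x_0-\epsilon<\bar{x}_\theta(y)<x_0+\epsilon$. Hence $|\bar{x}_\theta(y)-x_0|<\epsilon$ on $U$, which is continuity of $(y,\theta)\mapsto\bar{x}_\theta(y)$ at $(y_0,\theta_0)$.

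The only point requiring care is a boundary one. If $x_0$ is an endpoint of the interval $\XX$ --- say its left endpoint --- only the perturbation $x_0+\epsilon$ lies in $\XX$, and the argument above still gives $\bar{x}_\theta(y)<x_0+\epsilon$ for $(y,\theta)$ near $(y_0,\theta_0)$; the matching lower bound $\bar{x}_\theta(y)\ge x_0$ is then automatic because $\bar{x}_\theta(y)\in\XX$ and $x_0=\inf\XX$ (and if this infimum is not attained in $\XX$, Assumption \ref{a:opt_pol} rules the case out). The right-endpoint case is identical. I expect this boundary bookkeeping to be the only mildly delicate part: the heart of the proof is simply that a jointly continuous, strictly monotone family of functions has a continuously varying level set, and because the sandwiching points $x_0\pm\epsilon$ are fixed relative to the limit point rather than extracted from a sequence, no compactness of $\XX$ is required even when the threshold state space is unbounded.
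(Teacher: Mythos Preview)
Your proof is correct and follows essentially the same route as the paper's: define the difference $F(x,y,\theta)=r_\theta(x,y)-\psi^*_\theta(y)$, use strict monotonicity in $x$ to get strict inequalities at $x_0\pm\epsilon$, propagate them by joint continuity, and trap $\bar{x}_\theta(y)$ in $(x_0-\epsilon,x_0+\epsilon)$. The only difference is that you add explicit bookkeeping for the case where $x_0$ lies on the boundary of $\XX$, which the paper's proof tacitly assumes away; this is a welcome refinement but not a change in method.
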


\begin{proof}[Proof of proposition \ref{pr:pol_para_cont}]
	Define $F: \XX \times \YY \times \Theta \rightarrow \RR$ by 
	$F(x,y, \theta) := r_{\theta}(x,y) - \psi_{\theta}^*(y)$. Without loss of generality, 
	assume that $(x,y,\theta) \mapsto r_{\theta}(x,y)$ is strictly increasing in $x$, 
	then $F$ is strictly increasing in $x$ and continuous. 
	For all fixed $(y_0, \theta_0) \in \YY \times \Theta$ and $\epsilon>0$, 
	since $F$ is strictly increasing in $x$ and 
	$F(\bar{x}_{\theta_0}(y_0), y_0, \theta_0)=0$, we have
	\begin{equation*}
		F(\bar{x}_{\theta_0}(y_0) + \epsilon, y_0, \theta_0) > 0 
		\quad \mbox{and} \quad
		F(\bar{x}_{\theta_0}(y_0) - \epsilon, y_0, \theta_0) < 0.
	\end{equation*}
	Since $F$ is continuous with respect to $(y,\theta)$, there exists $\delta>0$
	such that for all 
	$(y,\theta) \in B_{\delta}((y_0,\theta_0)) 
	 		 := \left\{
	  					(y, \theta) \in \YY \times \Theta: 
						\| (y,\theta) - (y_0,\theta_0) \| < \delta 
			  	  \right\}$,
 	we have
	\begin{equation*}
		F(\bar{x}_{\theta_0}(y_0) + \epsilon, y, \theta) > 0 
		\quad \mbox{and} \quad 
		F(\bar{x}_{\theta_0}(y_0) - \epsilon, y, \theta) < 0.
	\end{equation*}
	Since $F(\bar{x}_{\theta}(y), y, \theta)=0$ and $F$ is strictly increasing in $x$, 
	we have
	\begin{equation*}
		\bar{x}_{\theta}(y) \in
			 \left(
			 		\bar{x}_{\theta_0}(y_0) - \epsilon, 
			  		\bar{x}_{\theta_0}(y_0) + \epsilon
			 \right),
		\mbox{ i.e., }
		|\bar{x}_{\theta}(y)-\bar{x}_{\theta_0}(y_0)|<\epsilon.
	\end{equation*}
	Hence, 
	$(y, \theta) \mapsto \bar{x}_{\theta}(y)$ is continuous, as was to be shown.
\end{proof}

\section{Applications}
\label{s:application}

In this section we consider several typical applications in economics, and compare the computational efficiency of continuation value and value function based methods. Numerical experiments show that the partial impact of lower dimensionality of the continuation value can be huge, even when the difference between the arguments of this function and the value function is only a single variable.

\subsection{Job Search II}  
\label{ss:js_ls}

Consider the adaptive search model of \cite{ljungqvist2012recursive} (section 
6.6). The model is as example \ref{eg:js_1}, apart from 
the fact that the distribution of the wage process $h$ is unknown. The worker 
knows that there are two possible densities $f$ and $g$, and puts 
prior probability $\pi_t$ on $f$ being chosen. If the current offer $w_t$ is rejected, a new offer $w_{t+1}$ is observed at the beginning of next period, and, by the Bayes' rule, $\pi_t$ updates via 
\begin{equation}
\label{eq:pi'}
	\pi_{t+1} 
		= \pi_t f(w_{t+1})
				    /
					 [ \pi_t f(w_{t+1}) + (1 - \pi_t) g(w_{t+1}) ]
		=: q (w_{t+1}, \pi_t).
\end{equation}
The state space is $\ZZ := \XX \times [0,1]$, where $\XX$ is a compact interval
of $\RR_+$.  Let $u(w) := w$. The value function of the unemployed worker satisfies
\begin{equation*}
	v^*(w, \pi) 
		= \max \left\{ \frac{w}{1 - \beta}, 
								c_0 + \beta \int 
														   v^*(w', q(w', \pi) ) 
														   h_\pi (w') 
													\diff w' 
					 \right\}, 
\end{equation*}
where $h_\pi (w') := \pi  f(w') + (1 - \pi) g(w')$. This is a typical threshold state 
problem, with threshold state $x := w \in \XX$ and environment 
$y := \pi \in [0,1] =: \YY$.
As to be shown, the optimal policy is determined by a reservation wage  
$\bar{w}:[0,1] \rightarrow \RR$ such that when $w = \bar{w}(\pi)$, the worker is 
indifferent between accepting and rejecting the offer. Consider the candidate space $(b[0,1], \| \cdot \|)$. The Jovanovic operator is
\begin{equation}
	\label{eq:rr_job}
	Q \psi (\pi) 
		= c_0 + \beta \int 
									 \max \left\{ 
													\frac{w'}{1-\beta}, 
													\psi \circ q(w',\pi) 
											 \right\} 
									 h_\pi (w') 
							   \diff w'.
\end{equation}

\begin{proposition}
\label{pr:js_ls}
	Let $c_0 \in \XX$. The following statements are true:		
	\begin{enumerate}
		\item[1.] $Q$ is a contraction on $(b[0,1], \| \cdot \|)$ of 
				 modulus 	$\beta$, with unique fixed point $\psi^*$.	
		\item[2.] The value function 
				 $v^*(w,\pi) 
				 		= \frac{w}{1-\beta} 
				 				\vee 
							\psi^*(\pi)$, 
				 reservation wage 
				 $\bar{w}(\pi) 
				 		= (1 - \beta) \psi^*(\pi)$, 
				 and optimal policy 
				 $\sigma^*(w, \pi) 
				 		= \1 \{ w \geq \bar{w}(\pi) \}$ 
				 for all $(w,\pi) \in \ZZ$.
		\item[3.] $\psi^*$, $\bar{w}$ and $v^*$ are continuous.
	\end{enumerate}
\end{proposition}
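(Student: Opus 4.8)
\emph{The plan} is to read off parts~1 and~2 from Theorem~\ref{t:bk} together with the conditional‑independence reduction of Section~\ref{ss:ci}, and to obtain part~3 from Remark~\ref{rm:bdd_cont}. \textbf{First}, since $u(w)=w$ and $\XX$ is a compact interval, the exit payoff $r(w,\pi)=w/(1-\beta)$ and the flow continuation payoff $c\equiv c_0$ are both bounded, so Assumption~\ref{a:ubdd_drift_gel} holds trivially with $g$ constant, $n:=0$, $m:=1$, $d:=0$; by Remark~\ref{rm:bdd_n01} the weight $\ell$ may be taken constant and $b_\ell\ZZ=b\ZZ$. Theorem~\ref{t:bk} then supplies a $\beta$-contraction on $b\ZZ$ whose unique fixed point is the continuation value function $\psi^*$ of \eqref{eq:cvf}, the Bellman equation $v^*=r\vee\psi^*$, and the optimality of $\sigma^*(z)=\1\{r(z)\ge\psi^*(z)\}$.

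\textbf{Next}, I would exploit the structure of Section~\ref{ss:ci} with threshold state $x:=w$ and environment $y:=\pi$: here $c$ is a function of $\pi$ only and, conditional on $\pi$, the pair $(w',\pi')=(w',q(w',\pi))$ is independent of $w$. Hence in \eqref{eq:cvf} both $c(z)$ and $\int v^*(z')\,P(z,\diff z')=\int v^*(w',q(w',\pi))\,h_\pi(w')\,\diff w'$ depend on $\pi$ alone, so $\psi^*(w,\pi)=\psi^*(\pi)$; the Bellman equation then yields $v^*(w,\pi)=\frac{w}{1-\beta}\vee\psi^*(\pi)$, and feeding this back into \eqref{eq:cvf} shows $\psi^*=Q\psi^*$ with $Q$ as in \eqref{eq:rr_job}. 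A direct estimate using that $a\mapsto\max\{b,a\}$ is $1$-Lipschitz shows $Q$ is a $\beta$-contraction on $(b[0,1],\|\cdot\|)$, so $\psi^*$ is its unique fixed point, which is part~1. For part~2, set $\bar w(\pi):=(1-\beta)\psi^*(\pi)$, so $r(\bar w(\pi),\pi)=\psi^*(\pi)$, and note $\sigma^*(w,\pi)=\1\{r(w,\pi)\ge\psi^*(\pi)\}=\1\{w\ge\bar w(\pi)\}$ as in \eqref{eq:res_rule_pol}. The one nontrivial point is to check $\bar w(\pi)\in\XX$, i.e.\ Assumption~\ref{a:opt_pol}: writing $\XX=[\underline w,\overline w]$, the representation $\psi^*(\pi)=c_0+\beta\int(\frac{w'}{1-\beta}\vee\psi^*(q(w',\pi)))\,h_\pi(w')\,\diff w'$ together with $c_0\ge\underline w$ gives $\psi^*(\pi)\ge\underline w/(1-\beta)$, while $v^*\le\overline w/(1-\beta)$ — valid because every finite stopping time pays at most $\overline w/(1-\beta)$ since $c_0\le\overline w$ — gives $\psi^*(\pi)\le\overline w/(1-\beta)$; hence $\bar w(\pi)\in[\underline w,\overline w]$ and Assumption~\ref{a:opt_pol} applies.

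\textbf{Finally}, for part~3 I would invoke Remark~\ref{rm:bdd_cont}: since $r$ and $c$ are bounded and continuous, it suffices to establish the Feller property (Assumption~\ref{a:feller}), which follows by dominated convergence, using that $\pi\mapsto h_\pi(w')$ is affine, $\pi\mapsto q(w',\pi)$ is continuous wherever the denominator in \eqref{eq:pi'} is positive, and $h_\pi\le f+g\in L^1$ supplies a dominating function. This yields continuity of $\psi^*$ and $v^*$, and $\bar w=(1-\beta)\psi^*$ is continuous as well (alternatively, invoke Proposition~\ref{pr:pol_cont}).

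\textbf{Main obstacle.} The contraction and Bellman steps are routine; the delicate part is the dimension reduction (showing $\psi^*$ is a function of $\pi$ only) together with the a priori two-sided bound $\underline w/(1-\beta)\le\psi^*(\pi)\le\overline w/(1-\beta)$ needed to place $\bar w(\pi)$ inside $\XX$ — this is exactly where the hypothesis $c_0\in\XX$ is used. A small extra care is required in the Feller argument at wage levels where $f$ and $g$ both vanish, but these form an $h_\pi$-null set.
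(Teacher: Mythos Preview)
Your proposal is correct and follows essentially the same approach as the paper: verify Assumption~\ref{a:ubdd_drift_gel} via boundedness, invoke Theorem~\ref{t:bk}, check Assumption~\ref{a:opt_pol} using the two-sided bound on $\psi^*$ that comes from $c_0\in\XX$, and establish continuity via the Feller property plus Remark~\ref{rm:bdd_cont} (the paper cites Lemma~\ref{lm:cont} for Feller and Proposition~\ref{pr:pol_cont} for $\bar w$). You are in fact more explicit than the paper about the dimension reduction step (why $\psi^*$ is a function of $\pi$ only, so that $Q$ genuinely acts on $b[0,1]$) and about the direct $1$-Lipschitz estimate for the contraction on $b[0,1]$; the paper simply writes ``by Theorem~\ref{t:bk}, claim~1 holds'' and leaves this implicit.
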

%
%
%

Since the computation is 2-dimensional via value function iteration (VFI), and is only 
1-dimensional via continuation value function iteration (CVI), we expect the 
computation via CVI to be much faster. We run several groups of tests and compare 
the time taken by the two methods. All tests are processed in a standard Python 
environment on a laptop with a 2.5 GHz Intel Core i5 and 8GB RAM.

\subsubsection{Group-1 Experiments}
\label{sss:g1}

This group documents the time taken to compute the fixed point across different parameter values and at different precision levels. Table \ref{tb:exp_g1} provides 
the list of experiments performed and table \ref{tb:result_g1} shows the result. 

\begin{table}[h]
	\caption{Group-1 Experiments}
	\label{tb:exp_g1}
	\vspace*{-0.3cm}
	\begin{center}
	\begin{threeparttable}
	\begin{tabular}{|c|c|c|c|c|c|}
		\hline 
		Parameter & Test 1 & Test 2 & Test 3 & Test 4 & Test 5\tabularnewline
		\hline 
		\hline 
		$\beta$ & $0.9$ & $0.95$ & $0.98$ & $0.95$ & $0.95$\tabularnewline
		\hline 
		$c_0$ & $0.6$ & $0.6$ & $0.6$ & $0.001$ & $1$\tabularnewline
		\hline 
		\end{tabular}
	\begin{tablenotes}
      \fontsize{9pt}{9pt}\selectfont
      \item Note: Different parameter values in each experiment.
    \end{tablenotes}
 	\end{threeparttable}
 	\par\end{center}
\end{table}
\begin{table}[h]
\caption{Time Taken of Group-1 Experiments }
\label{tb:result_g1}
\vspace*{-0.3cm}
	\noindent \begin{center}
	\begin{threeparttable}
	\begin{tabular}{|c|c|c|c|c|c|c|c|}
	\hline 
	\multicolumn{2}{|c|}{Test/Method/Precision} & $10^{-3}$ & $10^{-4}$  & 				$10^{-5}$ & $10^{-6}$  & $10^{-7}$ & $10^{-8}$\tabularnewline
	\hline 
	\hline 
	\multirow{2}{*}{Test 1} & VFI & $114.17$ & $140.94$ & $174.91$ & $201.77$ & 	$228.59$ & $255.67$\tabularnewline
	\cline{2-8} 
 	& CVI & $0.67$ & $0.92$ & $1.16$ & $1.43$ & $1.71$ & $1.94$\tabularnewline
	\hline 
	\multirow{2}{*}{Test 2} & VFI & $181.78$ & $234.58$ & $271.89$ & $323.22$ & $339.87$ & $341.55$\tabularnewline
	\cline{2-8} 
 	& CVI & $0.95$ & $1.49$ & $1.80$ & $2.27$ & $2.69$ & $3.11$\tabularnewline
	\hline 
	\multirow{2}{*}{Test 3} & VFI & $335.78$ & $335.87$ & $335.28$ & $335.91$ & 	$338.70$ & $334.21$\tabularnewline
	\cline{2-8} 
 	& CVI & $1.77$ & $2.68$ & $3.08$ & $3.03$ & $3.03$ & $3.06$\tabularnewline
	\hline 
	\multirow{2}{*}{Test 4} & VFI & $154.18$ & $201.05$ & $247.72$ & $294.90$ & 	$335.32$ & $335.00$\tabularnewline
	\cline{2-8} 
 	& CVI & $0.79$ & $1.22$ & $1.65$ & $2.06$ & $2.50$ & $2.91$\tabularnewline
	\hline 
	\multirow{2}{*}{Test 5} & VFI & $275.41$ & $336.02$ & $326.33$ & $327.41$ & $327.11$ & $327.71$\tabularnewline
	\cline{2-8} 
 	& CVI & $1.33$ & $2.12$ & $2.79$ & $2.99$ & $2.97$ & $2.97$\tabularnewline
	\hline 
	\end{tabular}
	\begin{tablenotes}
      \fontsize{9pt}{9pt}\selectfont
      \item Note: We set $\XX = [0,2]$, $f = \mbox{Beta}(1,1)$ and 
      $g = \mbox{Beta}(3, 1.2)$. The grid points of $(w, \pi)$ lie in 
      $[0, 2] \times [ 10^{-4}, 1-10^{-4}]$ with $100$ points for $w$ 
      and $50$ for $\pi$. For each given test and level of precision, 
      we run the simulation $50$ times for CVI, 20 times for VFI, and 
      calculate the average time (in seconds). 
    \end{tablenotes}
	\end{threeparttable}
	\end{center}
\end{table}

As shown in table \ref{tb:result_g1}, CVI performs much better than VFI. On 
average, CVI is $141$ times faster than VFI. In the best case, CVI is $207$ times 
faster (in test 5, VFI takes $275.41$ seconds to achieve a level of accuracy 
$10^{-3}$, while CVI takes only $1.33$ seconds). In the worst case, CVI is 
$109$ times faster (in test 5, CVI takes $2.99$ seconds as opposed to $327.41$ 
seconds by VFI to attain a precision level $10^{-6}$).

\subsubsection{Group-2 Experiments}
\label{sss:g2}

In applications, increasing the number of grid points provides more accurate 
numerical approximations. This group of tests compares how the two 
approaches perform under different grid sizes. The setup and result are summarized 
in table \ref{tb:exp_g2} and table \ref{tb:result_g2}, respectively. 

\begin{table}[h]
\caption{Group-2 Experiments}
\label{tb:exp_g2}
\vspace*{-0.3cm}
	\noindent \begin{center}
	\begin{threeparttable}
	\begin{tabular}{|c|c|c|c|c|c|c|}
	\hline 
	Variable & Test 2 & Test 6  & Test 7  & Test 8  & Test 9  & Test 10\tabularnewline
	\hline 
	\hline 
	$\pi$ & $50$ & $50$ & $50$ & $100$ & $100$ & $100$\tabularnewline
	\hline 
	$w$ & $100$ & $150$ & $200$ & $100$ & $150$ & $200$\tabularnewline
	\hline 
	\end{tabular}
	\begin{tablenotes}
      \fontsize{9pt}{9pt}\selectfont
      \item Note: Different grid sizes of the state variables in each experiment.
    \end{tablenotes}
	\end{threeparttable}
\end{center}
\end{table}
\begin{table}[h]
\caption{Time Taken of Group-2 Experiments}
\label{tb:result_g2}
\vspace*{-0.3cm}
	\noindent \begin{center}		
	\begin{threeparttable}
	\begin{tabular}{|c|c|c|c|c|c|c|c|}
	\hline 
	\multicolumn{2}{|c|}{Test/Precision/Method} & $10^{-3}$ & $10^{-4}$  & 	$10^{-5}$ & $10^{-6}$  & $10^{-7}$ & $10^{-8}$\tabularnewline
	\hline 
	\hline 
	\multirow{2}{*}{Test 2} & VFI & $181.78$ & $234.58$ & $271.89$ & $323.22$ & $339.87$ & $341.55$\tabularnewline
	\cline{2-8} 
	 & CVI & $0.95$ & $1.49$ & $1.80$ & $2.27$ & $2.69$ & 	$3.11$\tabularnewline
	\hline 
	\multirow{2}{*}{Test 6} & VFI & $264.34$ & $336.20$ & $407.52$ & $476.01$ & $508.05$ & $509.05$\tabularnewline
	\cline{2-8} 
	 & CVI & $0.96$ & $1.39$ & $1.82$ & $2.30$ & $2.73$ & 	$3.14$\tabularnewline
	\hline 
	\multirow{2}{*}{Test 7} & VFI & $355.40$ & $449.55$ & $545.51$ & $641.05$ & $679.93$ & $678.28$\tabularnewline
	\cline{2-8} 
	 & CVI & $0.92$ & $1.37$ & $1.79$ & $2.22$ & $2.84$ & 	$3.07$\tabularnewline
	\hline 
	\multirow{2}{*}{Test 8} & VFI & $352.76$ & $447.36$ & $541.75$ & $639.73$ & $678.91$ & $677.52$\tabularnewline
	\cline{2-8} 
	 & CVI & $1.94$ & $2.74$ & $3.58$ & $4.42$ & $5.30$ & 	$6.14$\tabularnewline
	\hline 
	\multirow{2}{*}{Test 9} & VFI & $526.72$ & $670.19$ & $812.66$ & $951.78$ & $1017.29$ & $1015.15$\tabularnewline
	\cline{2-8} 
	 & CVI & $1.81$ & $2.68$ & $3.68$ & $4.33$ & $5.23$ & 	$6.08$\tabularnewline
	\hline 
	\multirow{2}{*}{Test 10} & VFI & $706.34$ & $897.07$ & $1086.15$ & $1278.27$ & $1354.37$ & $1360.07$\tabularnewline
	\cline{2-8} 
	 & CVI & $1.83$ & $2.72$ & $3.51$ & $4.40$ & $5.21$ & 	$6.10$\tabularnewline
	\hline 
	\end{tabular}
	\begin{tablenotes}
      \fontsize{9pt}{9pt}\selectfont
      \item Note: We set $\XX = [0,2]$, $\beta = 0.95$, $c_0 = 0.6$,
      $f = \mbox{Beta}(1,1)$ and $g = \mbox{Beta}(3, 1.2)$. The grid points of 
      $(w, \pi)$ lie in $[0, 2] \times [10^{-4}, 1 - 10^{-4}]$. For each given test and 
      precision level, we run the simulation $50$ times for CVI, 20 times for VFI, 
      and calculate the average time (in seconds). 
    \end{tablenotes}
	\end{threeparttable}
	\par\end{center}
\end{table}
CVI outperforms VFI more obviously as the grid size increases. 
In table \ref{tb:result_g2} we see that as we increase the number of grid 
points for $w$, the speed of CVI is not affected. However, the speed of VFI drops 
significantly. Amongst tests 2, 6 and 7, CVI is $219$ times faster than VFI on 
average. In the best case, CVI is 386 times faster (while it takes VFI $355.40$ 
seconds to achieve a precision level $10^{-3}$ in test 7, CVI takes only $0.92$ 
second). As we increase the grid size of $w$ from $100$ to $200$, CVI is not 
affected, but the time taken for VFI almost doubles. 

As we increase the grid size of both $w$ and $\pi$, there is a slight decrease in the  
speed of CVI. Nevertheless, the decrease in the speed of VFI is exponential. Among 
tests 2 and 8--10, CVI is $223.41$ times as fast as VFI on average. In test 10, VFI 
takes $706.34$ seconds to obtain a level of precision $10^{-3}$, instead, CVI takes 
only $1.83$ seconds, which is 386 times faster.

\subsubsection{Group-3 Experiments}
\label{sss:g3}

Since the total number of grid points increases exponentially with the number of 
states, the speed of computation will drop dramatically with an additional state. To 
illustrate, consider a parametric class problem with respect to $c_0$. We set $\XX = 
[0,2]$, $\beta = 0.95$, $f = \mbox{Beta}(1,1)$ and $g = \mbox{Beta}(3, 1.2)$. Let 
$(w, \pi, c_0)$ lie in $[0, 2] \times [10^{-4}, 1-10^{-4}] \times [0, 1.5]$ with $100$ 
grid points for each. In this case, VFI is $3$-dimensional and suffers the "curse of 
dimensionality": the computation takes more than 7 days. However, CVI is only 
$2$-dimensional and the computation finishes within 171 seconds (with precision 
$10^{-6}$). 

In figure \ref{fig:rw2}, we see that the reservation wage is increasing in $c_0$ and decreasing in $\pi$. Intuitively, a higher level of compensation hinders the agent's incentive of entering into the labor market. Moreover, since $f$ is a less attractive distribution than $g$ and larger $\pi$ means more weight on $f$ and less on $g$, a larger $\pi$ depresses the worker's assessment of future prospects, and relatively low current offers become more attractive.
\begin{figure}[h]
\begin{center}
\includegraphics[scale=0.45]{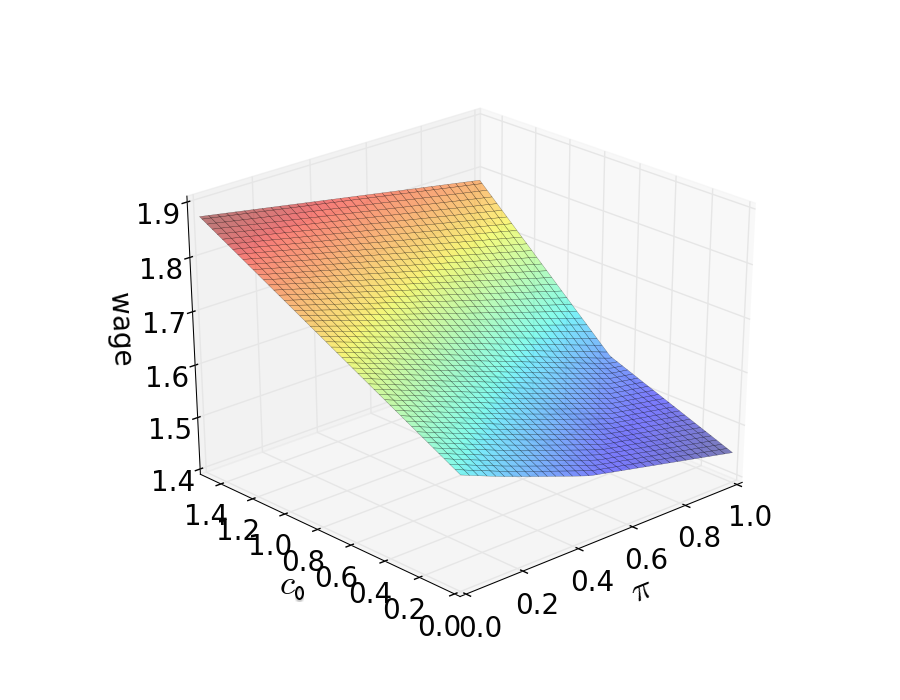}
\caption{The reservation wage}
\label{fig:rw2}
\end{center}
\end{figure}

\subsection{Job Search III}
\label{ss:jsg}

Recall the adaptive search model of example \ref{eg:js_adap} (subsequently studied by examples \ref{eg:js_adap_continue1}, \ref{eg:js_adap_continue2} and
\ref{eg:js_adap_continue3}). The value function satisfies
\begin{equation}
\label{eq:val_jsg}
	v^*(w,\mu,\gamma)
		=\max \left\{ 
							\frac{u(w)}{1-\beta}, 
							c_0 + \beta 	\int v^*(w',\mu',\gamma') 
												   	   f(w'|\mu, \gamma) 
										        \diff w' 
					\right\}.	
\end{equation}
Recall the Jovanovic operator defined by \eqref{eq:cvo_jsadap}.
This is a threshold state sequential decision problem, with threshold state 
$x:=w \in \RR_{++} =: \XX$ and environment
$y := (\mu,\gamma) \in \RR \times \RR_{++} =: \YY$. 
By the intermediate value theorem, assumption \ref{a:opt_pol} holds. Hence, the optimal policy is determined by a 
reservation wage $\bar{w}: \YY \rightarrow \RR$ such that when 
$w=\bar{w}(\mu, \gamma)$, the worker is indifferent between accepting and 
rejecting the offer. Since all the assumptions of proposition \ref{pr:cont} hold (see
example \ref{eg:js_adap_continue2}), by proposition 
\ref{pr:pol_cont}, $\bar{w}$ is continuous. Since $\psi^*$ is increasing in $\mu$ 
(see example \ref{eg:js_adap_continue3}), by proposition \ref{pr:pol_mon}, 
$\bar{w}$ is increasing in $\mu$.


In simulation, we set $\beta=0.95$, $\gamma_{\epsilon} = 1.0$,  
$\tilde{c}_0 = 0.6$, and consider different levels of risk aversion: $\sigma=3,4,5,6$. The grid points of $(\mu,\gamma)$ lie in $[-10,10] \times [10^{-4},10]$, with $200$ points for the $\mu$ grid and $100$ points for the $\gamma$ grid. 
We set the threshold function outside the grid to its value at the closest grid. The integration is computed via Monte Carlo with 1000 draws.\footnote{
	Changing the number of Monte Carlo samples, the grid range and grid density 
	produce almost the same results.} 
Figure \ref{fig:ru} provides the simulation results. There are several key characteristics, as can be seen.

\begin{figure}[h]
\begin{center}
\includegraphics[width=1\textwidth]{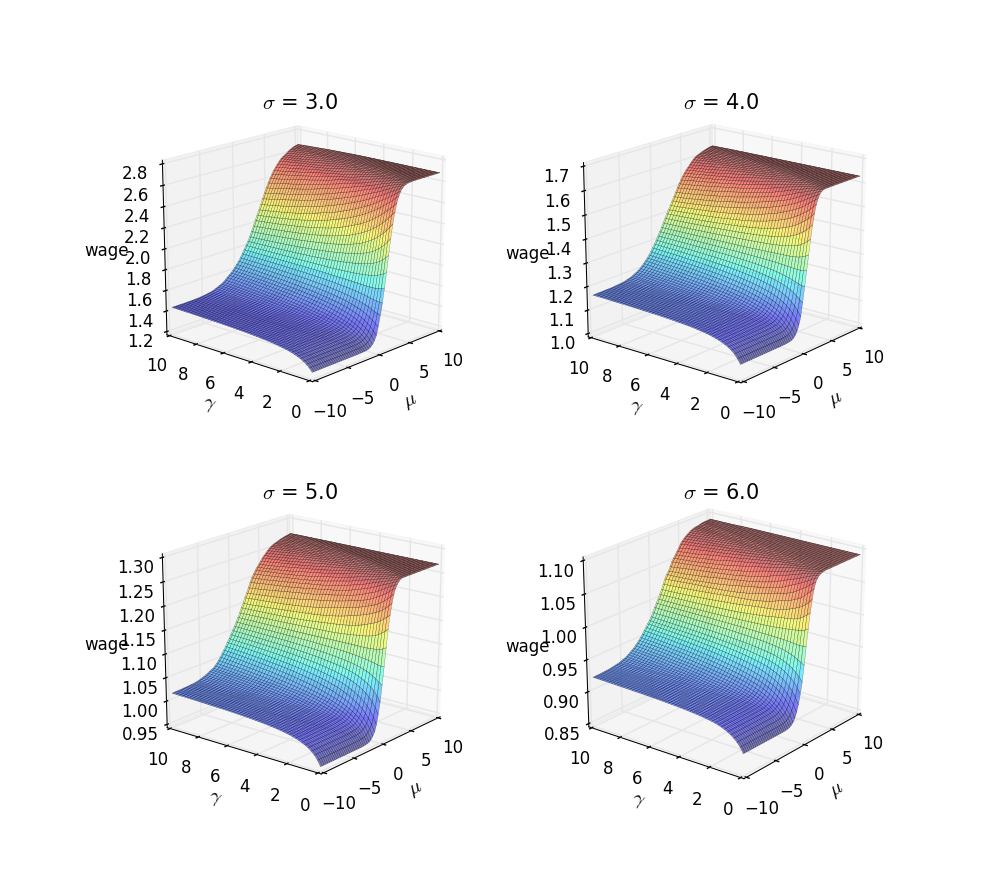}
\caption{The reservation wage}
\label{fig:ru}
\end{center}
\end{figure}

First, in each case, the reservation wage is an increasing function of $\mu$, which 
parallels the above analysis. Naturally, a more optimistic agent (higher $\mu$) 
would expect that higher offers can be obtained, and will not accept the offer until 
the wage is high enough. 

Second, the reservation wage is increasing in $\gamma$ for given $\mu$ of 
relatively small values, though it is decreasing in $\gamma$ for given $\mu$ of 
relatively large values. Intuitively, although a pessimistic worker (low $\mu$) expects to obtain low wage offers on average, part of the downside risks are chopped off since a compensation $\tilde{c}_0$ is obtained when the offer is turned down. In this case, a higher level of uncertainty (higher $\gamma$) provides a better chance to "try the fortune" for a good offer, boosting up the reservation wage. For an optimistic (high $\mu$) but risk-averse worker, the insurance out of compensation loses power. Facing a higher level of uncertainty, the worker has an incentive to enter the labor market at an earlier stage so as to avoid downside risks. As a result,  the reservation wage goes down.

\subsection{Firm Entry}
\label{ss:fe}

Consider a firm entry problem in the style of \cite{fajgelbaum2015uncertainty}.   Each period, an investment cost $f_t>0$ is observed, where 
$\{f_t\} \iidsim h$ with finite mean. The firm then decides whether to incur this cost and enter the market to win a stochastic dividend $x_t$ 
via production, or wait and reconsider next period. The firm aims to find a decision rule that maximizes the net returns. 
	
The dividend follows $x_t = \xi_t + \epsilon_t^{x}$, 
$\left\{ \epsilon_t^{x} \right\} 
		\iidsim 
  N(0,\gamma_{x})$, 
where $\xi_t$ and $\epsilon_t^x$ are respectively a persistent and a transient component, and 
$\xi_t = \rho \xi_{t-1} + \epsilon_t^{\xi}$, 
$\{ \epsilon_t^{\xi} \} \iidsim N(0, \gamma_{\xi})$.
A public signal $y_{t+1}$ is released at the end of each period $t$, where 
$y_t = \xi_t + \epsilon_t^{y}$, 
$\left\{ \epsilon_t^{y} \right\} 
		\iidsim 
	N(0,\gamma_{y})$. 
The firm has prior belief $\xi \sim N(\mu,\gamma)$ that is Bayesian updated after observing $y'$, so the posterior satisfies
$\xi | y' \sim N(\mu',\gamma')$, with 
\begin{equation}
\label{eq:pos_uncert}
	\gamma' 
		=\left[ 
				1 / \gamma + 
				\rho^2 / (\gamma_{\xi} + \gamma_{y})
		   \right]^{-1}
	\quad \mbox{and} \quad
	\mu' 
		= \gamma' 
			\left[
					\mu / \gamma + 
					\rho y' / (\gamma_{\xi} + \gamma_{y})
			\right].
\end{equation}
The firm has utility
$u(x)= \left(1-e^{-ax} \right) / a$, where $a>0$ is the coefficient of absolute risk aversion. The value function satisfies
\begin{align*}
	v^*(f,\mu, \gamma) 
		= \max \left\{ \EE_{\mu,\gamma} [u(x)] - f, \;
								\beta \int 
												v^*(f',\mu', \gamma') 
												p(f',y'|\mu, \gamma) 
										  \diff (f',y') 
					 \right\},
\end{align*}	
where $p(f',y'|\mu, \gamma) = h(f') l(y'|\mu,\gamma)$ with 
$l(y'|\mu,\gamma) 
		= N(\rho \mu, \rho^2 \gamma + \gamma_{\xi} + \gamma_y)$. 
The exit payoff is
$r(f,\mu,\gamma) 
	:= \EE_{\mu,\gamma} [u(x)] - f 
	= \left( 1 - 
				 e^{-a \mu + a^2 (\gamma + \gamma_x) /2} 
		\right) / a - f$. 
This is a threshold state problem, with threshold state $x := f \in \RR_{++} =: \XX$ 
and environment $y := (\mu, \gamma) \in \RR \times \RR_{++} =: \YY$. The Jovanovic operator is
\begin{equation}
\label{eq:rro_fe}
	Q \psi(\mu,\gamma) 
		= \beta \int \max 
								\left\{ 
									\EE_{\mu',\gamma'} [u(x')] - f', 
									\psi(\mu',\gamma') 
								\right\} 
							 p(f', y'|\mu,\gamma) 
				 	 \diff (f',y').
\end{equation}
Let $n:=1$, $g(\mu, \gamma) := e^{ -\mu + a^2 \gamma / 2}$, $m:=1$ and 
$d:=0$. Define $\ell$ according to \eqref{eq:ell_func}.
We use $\bar{f}: \YY \rightarrow \RR$ to denote the reservation cost. 
\begin{proposition}
\label{pr:unc_traps}
 The following statements
	are true:
	\begin{enumerate}
		\item[1.] $Q$ is a contraction mapping on $(b_{\ell} \YY, \| \cdot \|_{\ell})$ 
				with unique fixed point $\psi^*$.
		\item[2.] The value function 
				  $v^* (f, \mu, \gamma) 
				  		= r(f, \mu, \gamma) 
				  			\vee
				  			\psi^* (\mu, \gamma)$, 
				  reservation cost 
				  $\bar{f} (\mu, \gamma) 
				  		= \EE_{\mu,\gamma}[u(x)] 
							- \psi^*(\mu,\gamma)$
				  and optimal policy
				  $\sigma^*(f, \mu,\gamma) 
						= \1 \{ f \leq \bar{f}(\mu,\gamma)\}$	
				  for all $(f,\mu, \gamma) \in \ZZ$. 
		\item[3.] $\psi^*$, $v^*$ and $\bar{f}$ are continuous functions.
		\item[4.] $v^*$ is decreasing in $f$, and, if $\rho \geq 0$, then $\psi^*$ and 
				  $v^*$ are increasing in $\mu$.
	\end{enumerate}
\end{proposition}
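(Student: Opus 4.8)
The plan is to derive all four claims from the general machinery of Sections~\ref{s:opt_results}--\ref{s:opt_pol}, so that the only genuine work is the verification of Assumption~\ref{a:ubdd_drift_gel}.

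\emph{Part 1.} Here the flow continuation payoff is $c \equiv 0$ and the exit payoff $r(f,\mu,\gamma) = \EE_{\mu,\gamma}[u(x)] - f$ is unbounded below, because $u$ is CARA and $x$ has Gaussian tails. I would verify Assumption~\ref{a:ubdd_drift_gel} with $n:=1$ via Remark~\ref{rm:suff_key_assu}. The key computation is $\EE_z|r(Z_1)|$: since $f'\sim h$ is independent with finite mean, $\gamma'=\gamma'(\gamma)$ is deterministic, and $\mu'\mid(\mu,\gamma)$ is Gaussian with mean $\mu$ (the Bayesian ``martingale'' property, immediate from \eqref{eq:pos_uncert} and $\EE[y'\mid\mu]=\rho\mu$), one reduces everything to evaluating $\EE_z e^{-a\mu_1}$ by the Gaussian moment generating function and obtains $\EE_z|r(Z_1)| \le a_1 g(\mu,\gamma)+a_2$ for $g$ as specified preceding the proposition. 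For the drift \eqref{eq:drift}, the crucial ingredient is the variance decomposition $\gamma = \gamma' + \Var(\mu'\mid\mu,\gamma)$ (prior variance $=$ posterior variance $+$ variance of the posterior mean), which makes $\EE_z g(Z_1) = g(\mu,\gamma)$ exactly, giving $m=1$, $d=0$, and $\beta m<1$. Theorem~\ref{t:bk} then yields claim~1 and, in passing, the optimal policy $\sigma^*(z)=\1\{r(z)\ge\psi^*(z)\}$.

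\emph{Part 2.} Because $f_t$ is i.i.d.\ and enters neither the transition of $(\mu,\gamma)$ nor $c$, the operator $Q$ in \eqref{eq:rro_fe} maps functions of $(\mu,\gamma)$ into functions of $(\mu,\gamma)$, so its fixed point $\psi^*$ depends on $(\mu,\gamma)$ only (the conditional-independence reduction of Section~\ref{ss:ci}). The Bellman equation $v^*=r\vee\psi^*$ (established after \eqref{eq:cvf}) then gives the stated form of $v^*$. Next I would check Assumption~\ref{a:opt_pol}: $r$ is strictly decreasing in $f$ on $\XX=\RR_{++}$, and $f\mapsto r(f,\mu,\gamma)$ is continuous and unbounded below, so by the intermediate value theorem there is a unique $\bar f(\mu,\gamma)$ with $r(\bar f(\mu,\gamma),\mu,\gamma)=\psi^*(\mu,\gamma)$, namely $\bar f(\mu,\gamma)=\EE_{\mu,\gamma}[u(x)]-\psi^*(\mu,\gamma)$. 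Strict monotonicity of $r$ in $f$ then turns $\sigma^*(z)=\1\{r(z)\ge\psi^*(z)\}$ into $\sigma^*(f,\mu,\gamma)=\1\{f\le\bar f(\mu,\gamma)\}$, as in \eqref{eq:res_rule_pol}.

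\emph{Part 3.} Note that $P$ does \emph{not} admit a density representation on $\ZZ$, since the $\gamma$-marginal of the transition is a point mass at $\gamma'(\gamma)$; hence Corollary~\ref{cr:cont_dst} is unavailable and I would use Proposition~\ref{pr:cont}, exactly as in Example~\ref{eg:js_adap_continue2}. The Feller property (Assumption~\ref{a:feller}) follows from \eqref{eq:rro_fe} since $\mu'$ is continuous in $(\mu,\gamma,y')$, $\gamma'$ in $\gamma$, and $l(y'\mid\mu,\gamma)$, $h(f')$ are continuous, so lemma~\ref{lm:cont} applies. For Assumption~\ref{a:payoff_cont}, $c\equiv0$ and $r$ are continuous, and continuity of $z\mapsto\int|r(z')|P(z,\diff z')$ again comes from the explicit Gaussian computation of Part~1 together with lemma~\ref{lm:cont}; Assumption~\ref{a:l_cont} is immediate because $\ell=g+d'$ here ($n=1$, $c\equiv0$, cf.\ Remark~\ref{rm:bdd_n01}) and $\int\ell(z')P(z,\diff z')=g(\mu,\gamma)+d'$. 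Proposition~\ref{pr:cont} gives continuity of $\psi^*$ and $v^*$, and $\bar f=\EE_{\mu,\gamma}[u(x)]-\psi^*$ is continuous since $\EE_{\mu,\gamma}[u(x)]$ is (equivalently, invoke Proposition~\ref{pr:pol_cont}).

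\emph{Part 4 and the main obstacle.} That $v^*$ is decreasing in $f$ is immediate from $v^*(f,\mu,\gamma)=(\EE_{\mu,\gamma}[u(x)]-f)\vee\psi^*(\mu,\gamma)$. For $\rho\ge0$, I would show that $Q$ maps functions increasing in $\mu$ to functions increasing in $\mu$ and invoke closedness of that set (the coordinatewise use of Proposition~\ref{pr:mono} employed in Example~\ref{eg:js_adap_continue3}): by \eqref{eq:pos_uncert} $\mu'$ is increasing in $\mu$ for fixed $(y',\gamma)$, $r$ and $u$ are increasing in $\mu'$, and $y'\mid(\mu,\gamma)\sim N(\rho\mu,\rho^2\gamma+\gamma_\xi+\gamma_y)$ is stochastically increasing in $\mu$ when $\rho\ge0$; hence $\psi^*$ is increasing in $\mu$, and so is $v^*=r\vee\psi^*$ since $r$ is increasing in $\mu$. \textbf{The main obstacle} is Part~1, specifically the drift condition: one must recognise and use the Bayesian variance decomposition $\gamma=\gamma'+\Var(\mu'\mid\mu,\gamma)$ to get $m=1$, $d=0$ (a cruder bound on $\EE_z g(Z_1)$ would not yield $\beta m<1$ in general), and the same identity is what confirms that $r$ is genuinely $\ell$-bounded, i.e.\ that $g$ dominates $|\EE_{\mu,\gamma}[u(x)]|$ up to additive constants.
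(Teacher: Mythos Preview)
Your proposal is correct and follows essentially the same route as the paper: verify Assumption~\ref{a:ubdd_drift_gel} with $n=1$, $m=1$, $d=0$ via the Gaussian computation $\int e^{-a\mu'+a^2\gamma'/2}\,l(y'|\mu,\gamma)\,\diff y' = e^{-a\mu+a^2\gamma/2}$, then invoke Theorem~\ref{t:bk}, Assumption~\ref{a:opt_pol} (intermediate value theorem), Proposition~\ref{pr:cont} (Feller property plus continuity of the relevant integrals via Lemma~\ref{lm:cont}), and Proposition~\ref{pr:mono} (stochastic monotonicity of $l$ in $\mu$ when $\rho\ge0$). Your framing of the drift identity as the Bayesian variance decomposition $\gamma=\gamma'+\Var(\mu'\mid\mu,\gamma)$ and your observation that $P$ lacks a density representation (so Corollary~\ref{cr:cont_dst} is unavailable) are nice clarifications, but the underlying argument is identical to the paper's.
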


\begin{remark}
Notably, the first three claims of proposition \ref{pr:unc_traps} have no restriction on the range of $\rho$ values, the autoregression coefficient of $\{ \xi_t\}$.
\end{remark}
\begin{figure}[h]
\begin{center}
\includegraphics[scale=0.5]{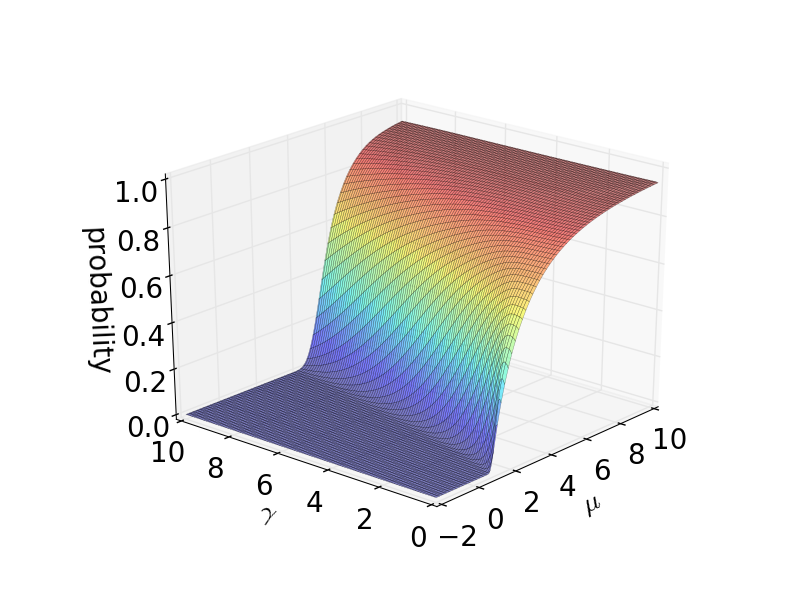}
\caption{The perceived probability of investment}
\label{fig:ppi}
\end{center}
\end{figure}
%


In simulation, we set $\beta=0.95$, $a=0.2$, $\gamma_{x}=0.1$, 
$\gamma_{y}=0.05$, and $h=LN(0, 0.01)$. Consider
$\rho = 1$, $\gamma_{\xi} = 0$, and let the grid points of 
$(\mu,\gamma)$ lie in $[-2,10] \times [10^{-4},10]$ with 
$200$ points for the $\mu$ grid and $100$ points for the $\gamma$ grid.  
The reservation cost function outside of the grid points is set to its value at the closest grid point. The integration in the operator is computed via Monte Carlo with 
1000 draws.\footnote{
		Changing the number of Monte Carlo samples, the grid range and grid 
		density produces almost the same results.} 
We plot the perceived probability of investment, i.e., 
$\PP \left\{f \leq \bar{f}(\mu, \gamma) \right\}$. 

As shown in figure \ref{fig:ppi}, the perceived probability of investment
is increasing in $\mu$ and decreasing in $\gamma$.
This parallels propositions 1 and 2 of \cite{fajgelbaum2015uncertainty}.
Intuitively, for given investment cost $f$ and variance $\gamma$, a more optimistic firm (higher $\mu$) is more likely to invest. Furthermore, higher $\gamma$ implies a higher level of uncertainty, thus a higher risk of low returns. As a result, the risk averse firm prefers to delay investment (gather more information to avoid downside risks), and will not enter the market unless the cost of investment is low enough.

\subsection{Job Search IV}
\label{ss:js_exog}

We consider another extension of \cite{mccall1970}. The setup is as in example \ref{eg:jsll}, except that the state process follows
\begin{align}
	\label{eq:w_t}
	w_{t}= & \mbox{ }\eta_{t}+\theta_{t}\xi_{t} \\
	\label{eq:theta_t}
	\ln\theta_{t}= & \mbox{ }\rho\ln\theta_{t-1}+\ln u_{t}
\end{align}
where $\rho \in [-1,1]$, 
$\{ \xi_t \} \iidsim h$ and $ \{ \eta_t \} \iidsim v$ with finite first moments, 
and $\{ u_t \} \iidsim LN(0, \gamma_u)$. Moreover, $\{ \xi_t \}$, $ \{ \eta_t \}$ 
and $\{ u_t \}$ are independent, and $\{ \theta_t \}$ is independent of 
$\{ \xi_t\}$ and $\{ \eta_t \}$. Similar settings as 
\eqref{eq:w_t}--\eqref{eq:theta_t} appear in many search-theoretic and real options studies (see e.g., \cite{gomes2001equilibrium}, \cite{low2010wage}, 
\cite{chatterjee2012maturity}, \cite{bagger2014tenure},
\cite{kellogg2014effect}).


We set $h = LN(0, \gamma_{\xi})$ and $v = LN(\mu_\eta, \gamma_{\eta})$. In this case, $\theta_t$ and $\xi_t$ are persistent and transitory components of income, respectively, and $u_t$ is treated as a shock to the persistent component. $\eta_t$ can be interpreted as social security, gifts, etc. Recall that the utility of the agent is defined by \eqref{eq:crra_utils}, $\tilde{c}_0 > 0$ is the unemployment compensation and 
$c_0 := u(\tilde{c}_0)$. The value function of the agent satisfies
\begin{align*}
	v^*(w,\theta) 
		= \max \left\{ 
							\frac{ u(w) }{ 1-\beta }, 
							 c_0 + \beta \int 
												  	 v^*(w',\theta') 
												  	 f(\theta'|\theta) 
													   h(\xi') v(\eta') 
												\diff (\theta', \xi', \eta') 						
					\right\},		
\end{align*}
where $w'=\eta' + \theta' \xi'$ and 
$f(\theta'|\theta) = LN(\rho \ln \theta, \gamma_u)$ is the density kernel of 
$\{ \theta_t \}$. 
The Jovanovic operator is
\begin{equation*}
	Q\psi(\theta) 
		= c_0 + \beta \int 
								 	 \max \left\{ 
													\frac{ u(w') }{ 1-\beta },
													\psi(\theta') 
											  \right\} 
 								  	  f(\theta' |\theta) h(\xi') v(\eta') 
 						   	    \diff (\theta', \xi', \eta').
\end{equation*}
This is another threshold state problem, with threshold state 
$x := w \in \RR_{++} =: \XX$ and environment 
$y := \theta \in \RR_{++} =: \YY$. Let $\bar{w}$ be the reservation wage.
Recall the relative risk aversion coefficient $\delta$ in \eqref{eq:crra_utils}
and the weight function $\ell$ defined by \eqref{eq:ell_func}.

\subsubsection{Case I: $\delta \geq 0$ and $\delta \neq 1$}
For $\rho \in (-1,1)$, choose $n \in \NN_0$ such that 
$\beta e^{\rho^{2n} \sigma} < 1$, where 
$\sigma := (1 - \delta)^2 \gamma_u$. 
Let $g(\theta) 
			:= \theta^{(1- \delta) \rho^n} + 
				 \theta^{-(1 - \delta) \rho^n}$ and 
$m := d:= e^{\rho^{2n} \sigma}$.
\begin{proposition}
\label{pr:js_exog}
	If $\rho \in (-1,1)$, then the following statements hold:
	
	\begin{enumerate}
	\item[1.] $Q$ is a contraction mapping on $(b_\ell \YY, \| \cdot \|_{\ell})$ with 
			unique fixed point $\psi^*$.	
	\item[2.] The value function 
			 $v^*(w,\theta)
				=  \frac{w^{1 - \delta}}{(1-\beta)(1 - \delta)} 
					 \vee \psi^*(\theta)$, 
			 reservation wage 
			 $\bar{w}(\theta) 
			 		= [(1-\beta) (1 - \delta) \psi^*(\theta)]^{\frac{1}{1 - \delta}}$, 
			 and optimal policy 
			 $\sigma^*(w, \theta) 
			 	= \1 \{ 
			 				w \geq \bar{w}(\theta) 
			 			\}$ 
			 for all $(w, \theta) \in \ZZ$.
	\item[3.] $\psi^*$ and $\bar{w}$ are continuously differentiable, and $v^*$ is 
		 	 continuous.	
	\item[4.] $v^*$ is increasing in $w$, and, if $\rho \geq 0$, then $\psi^*$, $v^*$ 
			and $\bar{w}$ are increasing in $\theta$. 
	\end{enumerate}
\end{proposition}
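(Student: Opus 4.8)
The strategy is to derive all four claims from the general machinery of Sections~\ref{s:opt_results}--\ref{s:opt_pol}, so the only substantive work is to verify Assumption~\ref{a:ubdd_drift_gel} for the stated $n$, $g(\theta)=\theta^{(1-\delta)\rho^n}+\theta^{-(1-\delta)\rho^n}$ and $m=d=e^{\rho^{2n}\sigma}$. Since $c\equiv c_0$ is constant, only the exit payoff $r(w)=w^{1-\delta}/\big((1-\beta)(1-\delta)\big)$ needs attention. Writing $w_n=\eta_n+\theta_n\xi_n$ and using that $\{\eta_t\}$, $\{\xi_t\}$, $\{u_t\}$ are i.i.d., mutually independent and independent of $\theta$, I would bound $w_n^{1-\delta}\le\eta_n^{1-\delta}+(\theta_n\xi_n)^{1-\delta}$ when $0\le\delta<1$ and $w_n^{1-\delta}\le(\theta_n\xi_n)^{1-\delta}$ when $\delta>1$ (monotonicity of $t\mapsto t^{1-\delta}$), take expectations, and use $\ln\theta_n=\rho^n\ln\theta+\sum_{j=1}^n\rho^{n-j}\ln u_j$ with $u_j\sim LN(0,\gamma_u)$ to get $\EE_\theta[\theta_n^{1-\delta}]=C\,\theta^{(1-\delta)\rho^n}$ for a finite constant $C$, while $\EE[\xi_n^{1-\delta}]$ and $\EE[\eta_n^{1-\delta}]$ are finite. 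This yields $\int|r(z')|P^n(z,dz')\le a_1 g(\theta)+a_2$, and since $g\ge2$ by AM--GM also $|c_0|\le a_3 g(\theta)$, so Remark~\ref{rm:suff_key_assu} disposes of~\eqref{eq:bd}. For the drift~\eqref{eq:drift}, setting $a:=\theta^{(1-\delta)\rho^n}$ and computing the lognormal moment gives $\int g(\theta')P(\theta,d\theta')=e^{\rho^{2n}\sigma/2}\big(a^\rho+a^{-\rho}\big)$, and since $|\rho|<1$ forces $a^\rho+a^{-\rho}=2\cosh(\rho\ln a)\le2\cosh(\ln a)=a+a^{-1}=g(\theta)$, we obtain $\int g(\theta')P(\theta,d\theta')\le e^{\rho^{2n}\sigma/2}g(\theta)\le m\,g(\theta)+d$, with $\beta m=\beta e^{\rho^{2n}\sigma}<1$ by the choice of $n$. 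This verification is the step I expect to be the main obstacle, essentially because $r$ is a function of $w$ whereas the geometric drift has to be engineered on the persistent component $\theta$.

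With Assumption~\ref{a:ubdd_drift_gel} in hand, claim~1 is Theorem~\ref{t:bk}(1)--(2) (noting that $\ell$ in~\eqref{eq:ell_func} depends only on $\theta$, so $b_\ell\YY$ is well-defined and the fixed point lies in it). For claim~2, I would first observe that the conditional law of $(w',\theta')$ under $P((w,\theta),\cdot)$ depends only on $\theta$ (as $\theta'=\theta^\rho u'$ and $w'=\eta'+\theta'\xi'$ with fresh draws $u',\eta',\xi'$) and $c\equiv c_0$, so $Q$ sends every $\psi\in b_\ell\ZZ$ to a function of $\theta$ alone; hence its fixed point satisfies $\psi^*=\psi^*(\theta)$, and $v^*=r\vee\psi^*$ gives the stated value function. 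Because $r(\cdot,\theta)$ is a strictly increasing continuous bijection of $\RR_{++}$ onto $(0,\infty)$ when $\delta<1$ and onto $(-\infty,0)$ when $\delta>1$, and because $\psi^*\ge c_0>0$ in the first case while $v^*\le0$ (so $\psi^*\le c_0<0$) in the second, Assumption~\ref{a:opt_pol} holds and $r(\bar x(\theta),\theta)=\psi^*(\theta)$ has the unique root $\bar w(\theta)=\big[(1-\beta)(1-\delta)\psi^*(\theta)\big]^{1/(1-\delta)}$; the optimal-policy formula then follows from Theorem~\ref{t:bk}(3) together with~\eqref{eq:res_rule_pol}.

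For claim~3, continuity of $\psi^*$ follows from Corollary~\ref{cr:cont_dst} (or Proposition~\ref{pr:cont}): $P$ admits a density representation $p(w',\theta'\mid\theta)=q(w'\mid\theta')\cdot LN(\rho\ln\theta,\gamma_u)(\theta')$ continuous in $\theta$, $c$ and $\theta\mapsto\int|r(z')|P(z,dz')$ are continuous, and Assumption~\ref{a:l_cont} is checked from the explicit form of $\ell$; then $v^*=r\vee\psi^*$ is continuous since $r$ is. For continuous differentiability of $\psi^*$, the cleanest route is the reparametrisation $s:=\ln\theta\in\RR$, under which the relevant transition density becomes the Gaussian $N(\rho s,\gamma_u)$ and the fixed-point equation has exactly the structure treated in example~\ref{eg:jsll_continue4}; the hypotheses of Proposition~\ref{pr:cont_diff_gel} (Assumptions~\ref{a:2nd_diff} and~\ref{a:cont_diff_gel}, with $k_1=r$ and $k_2=\ell$) are then verified by the same estimates as there, yielding existence and continuity of $D_\theta\psi^*$. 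Finally $\bar w$ is continuously differentiable by Proposition~\ref{pr:pol_diff}, since $r$ is $C^1$ on $\interior(\ZZ)$ with $D_w r\neq0$.

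Claim~4: $v^*=r\vee\psi^*$ is increasing in $w$ because $r$ is strictly increasing in $w$ and $\psi^*$ does not depend on $w$. When $\rho\ge0$, the kernel is stochastically increasing in $\theta$ — couple via common draws $u',\eta',\xi'$, so that $\theta\mapsto\theta^\rho u'$, hence $\theta\mapsto(w',\theta')$, is coordinatewise nondecreasing — so Assumption~\ref{a:mono_map} holds alongside Assumptions~\ref{a:c_incre} and~\ref{a:r_incre}; Proposition~\ref{pr:mono} then gives $\psi^*$, and hence $v^*=r\vee\psi^*$, increasing in $\theta$, and Proposition~\ref{pr:pol_mon} (with $r$ strictly increasing in $w$) gives $\bar w$ increasing in $\theta$, completing the proof.
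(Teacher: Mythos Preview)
Your proposal is correct and follows essentially the same route as the paper: verify Assumption~\ref{a:ubdd_drift_gel} for the stated $n$, $g$, $m$, $d$ via lognormal moment computations, then read off claims~1--4 from Theorem~\ref{t:bk} and Propositions~\ref{pr:mono}, \ref{pr:cont_diff_gel}, \ref{pr:pol_mon}, \ref{pr:pol_diff}. Two cosmetic differences are worth noting: your cosh inequality $a^{\rho}+a^{-\rho}=2\cosh(\rho\ln a)\le 2\cosh(\ln a)=g(\theta)$ is slightly cleaner than the paper's ``$+1$'' trick (the paper bounds $a^{\rho}+a^{-\rho}\le g(\theta)+1$, cf.\ the footnote in the proof of example~\ref{eg:jsll}), and for claim~3 you reparametrise to $s=\ln\theta$ so that the transition kernel becomes the Gaussian $N(\rho s,\gamma_u)$ and the verification of Assumptions~\ref{a:2nd_diff} and~\ref{a:cont_diff_gel} reduces to example~\ref{eg:jsll_continue4}, whereas the paper works directly with the lognormal density $f(\theta'|\theta)$ and locates the zeros of $\partial^2 f/\partial\theta^2$ explicitly; both routes yield the same conclusion.
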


\begin{remark}
	If $\beta e^{(1-\delta)^2 \gamma_u / 2} < 1$, then claims 1--3 of 
	proposition \ref{pr:js_exog} remain true for $|\rho| = 1$, and claim 4 remains 
	true for $\rho = 1$.
\end{remark}

\subsubsection{Case II: $\delta=1$}

For $\rho \in (-1,1)$, choose $n \in \NN_0$ such that 
$\beta e^{\rho^{2n} \gamma_u} < 1$.
Let $g(\theta) 
			:= \theta^{\rho^n} + 
				 \theta^{-\rho^n}$ and 
$m := d:= e^{\rho^{2n} \gamma_u}$.

\begin{proposition}
\label{pr:js_exog_log}
	
	If $\rho \in (-1,1)$, then the following statements hold:
	
	\begin{enumerate}
	\item[1.] $Q$ is a contraction mapping on $(b_\ell \YY, \| \cdot \|_{\ell})$ with 
			 unique fixed point is $\psi^*$.		
	\item[2.] The value function 
			 $v^*(w,\theta)
				=  \frac{\ln w}{1-\beta} \vee \psi^*(\theta)$, 
			 reservation wage 
			 $\bar{w}(\theta) 
			 		= e^{ (1-\beta) \psi^*(\theta)}$, 
			 and optimal policy 
			 $\sigma^*(w, \theta) 
			 	= \1 \{ 
			 							w \geq \bar{w}(\theta) 
			 						  \}$ 
			 for all $(w, \theta) \in \ZZ$.	
	\item[3.] $\psi^*$ and $\bar{w}$ are continuously differentiable, and $v^*$ is 
		 	 continuous.	
	\item[4.] $v^*$ is increasing in $w$, and, if $\rho \geq 0$, then $\psi^*$, $v^*$ 
			 and $\bar{w}$ are increasing in $\theta$.
	\end{enumerate}
\end{proposition}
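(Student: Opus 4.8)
The plan is to check Assumption~\ref{a:ubdd_drift_gel} for the stated data and then read off claims 1--4 from Theorem~\ref{t:bk} and the results of Sections~\ref{s:properties_cv}--\ref{s:opt_pol}. Such an $n$ exists because $|\rho|<1$ gives $\rho^{2n}\to0$, hence $\beta e^{\rho^{2n}\gamma_u}<1$ for $n$ large, and then $\beta m<1$. To bound $\EE_{\theta}|r(w_n)|$ I would use $w_n=\eta_n+\theta_n\xi_n$ with $\eta_n,\xi_n$ lognormal and $\ln\theta_n=\rho^n\ln\theta+\zeta_n$, $\zeta_n$ mean-zero Gaussian with variance bounded in $n$: the two-sided estimate $|\ln(\eta_n+\theta_n\xi_n)|\le\ln 2+|\ln\eta_n|+|\ln\theta_n|+|\ln\xi_n|$, finiteness of $\EE|\ln\eta_n|$ and $\EE|\ln\xi_n|$, and $|\rho^n\ln\theta|\le e^{\rho^n\ln\theta}+e^{-\rho^n\ln\theta}=g(\theta)$ give $\EE_{\theta}|r(w_n)|\le a_1g(\theta)+a_2$; the flow payoff is the constant $c_0$, so $\EE_{\theta}|c(Z_n)|\le|c_0|$. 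For \eqref{eq:drift}, $\EE_{\theta}[\theta_1^{\pm\rho^n}]=e^{\rho^{2n}\gamma_u/2}\,\theta^{\pm\rho^{n+1}}$, so it suffices to observe $\theta^{\rho^{n+1}}+\theta^{-\rho^{n+1}}\le\theta^{\rho^n}+\theta^{-\rho^n}+1$ for all $\theta>0$ (immediate from the $\theta\leftrightarrow 1/\theta$ symmetry and $|\rho^{n+1}|\le|\rho^n|$), which yields $\EE_{\theta}g(\theta_1)\le mg(\theta)+d$. By Remark~\ref{rm:suff_key_assu} this establishes Assumption~\ref{a:ubdd_drift_gel}, so Theorem~\ref{t:bk} gives claim~1 and the optimal policy $\sigma^*(z)=\1\{r(z)\ge\psi^*(z)\}$. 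Since the law of $(w',\theta')$ given $(w,\theta)$ depends on $(w,\theta)$ only through $\theta$ and $c$ is constant, $Q$ maps functions of $\theta$ to functions of $\theta$, so $\psi^*$ depends on $\theta$ alone (Section~\ref{ss:ci}).

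Claim~2 then follows from the Bellman equation $v^*=r\vee\psi^*$: $v^*(w,\theta)=\frac{\ln w}{1-\beta}\vee\psi^*(\theta)$. Because $w\mapsto\ln w/(1-\beta)$ is a strictly increasing bijection of $\RR_{++}$ onto $\RR$, the equation $r(w,\theta)=\psi^*(\theta)$ has the unique solution $\bar w(\theta)=e^{(1-\beta)\psi^*(\theta)}$; this is precisely Assumption~\ref{a:opt_pol}, and \eqref{eq:res_rule_pol} gives $\sigma^*(w,\theta)=\1\{w\ge\bar w(\theta)\}$.

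For claim~3, continuity of $\psi^*$ follows from Corollary~\ref{cr:cont_dst} (the $\theta$-kernel has the lognormal density $f(\theta'|\theta)=LN(\rho\ln\theta,\gamma_u)$, continuous in $\theta$, and the relevant reward integrals are continuous by the estimates above, as in example~\ref{eg:jsll_continue2}), whence $v^*=r\vee\psi^*$ is continuous. For smoothness I would pass to $\tilde\theta:=\ln\theta$, under which the transition becomes Gaussian and the fixed-point equation for $\psi^*$ takes the same form as in example~\ref{eg:jsll_continue4}; verifying Assumptions~\ref{a:2nd_diff} and \ref{a:cont_diff_gel} (finitely many roots of $D^2f=0$, the escape condition of Remark~\ref{rm:diff_ubdd_ss}, and the dominated-derivative bounds, now with the role of $|r|$ played by the convolution payoff $\theta'\mapsto\EE|\ln(\eta'+\theta'\xi')|$, which is controlled by $(\theta')^{\rho^n}+(\theta')^{-\rho^n}+\mathrm{const}$) lets Proposition~\ref{pr:cont_diff_gel} deliver continuous differentiability of $\psi^*$; smoothness of $\bar w=e^{(1-\beta)\psi^*}$ follows by composition (equivalently Proposition~\ref{pr:pol_diff}).

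Claim~4 is handled with Propositions~\ref{pr:mono} and \ref{pr:pol_mon}: $r(w,\theta)=\ln w/(1-\beta)$ is strictly increasing in $w$ and constant in $\theta$, $c\equiv c_0$, and when $\rho\ge0$ the map $\theta\mapsto\theta'$ (hence $\theta\mapsto w'=\eta'+\theta'\xi'$, as $\xi'>0$) is stochastically increasing, so Assumption~\ref{a:mono_map} holds; this gives $\psi^*$ increasing in $\theta$, $v^*$ increasing in $w$ (unconditionally) and in $\theta$ (for $\rho\ge0$), and $\bar w$ increasing in $\theta$. The main obstacle is the first step: the bound on $\EE_{\theta}|r(w_n)|$ is the delicate point because $r$ is the logarithm of the \emph{sum} $\eta_n+\theta_n\xi_n$ rather than a power of the state, so one must combine the $n$-step smoothing with the two-sided log estimate and with the flattening comparison; everything downstream is routine once Assumption~\ref{a:ubdd_drift_gel} is in hand.
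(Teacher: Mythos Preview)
Your proposal is correct and follows essentially the paper's route: verify Assumption~\ref{a:ubdd_drift_gel} with the stated $g,m,d,n$, then invoke Theorem~\ref{t:bk} and the results of Sections~\ref{s:properties_cv}--\ref{s:opt_pol} for claims 2--4, exactly as in the proof of Proposition~\ref{pr:js_exog}. The only substantive differences are cosmetic: (i) to bound $\EE_\theta|r(w')|$ the paper uses the inequality $|\ln a|\le a+1/a$, giving $|\ln(\eta'+\theta'\xi')|\le 1/\eta'+\eta'+\theta'\xi'$ and hence a one-step bound $\mathrm{const}+\mathrm{const}\cdot\theta^{\rho}$, which it then iterates by induction to reach $\theta^{\rho^n}+\theta^{-\rho^n}$, whereas you use $|\ln(a+b)|\le\ln 2+|\ln a|+|\ln b|$ together with $|x|\le e^{x}+e^{-x}$ to land on $g(\theta)$ directly; (ii) for claim~3 the paper works with the lognormal density $f(\theta'|\theta)$ in the original coordinate and checks Assumptions~\ref{a:2nd_diff}--\ref{a:cont_diff_gel} there, while you propose the change of variable $\tilde\theta=\ln\theta$ to reduce to the Gaussian calculations of example~\ref{eg:jsll_continue4}. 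Both variants are valid and lead to the same $g$ and the same conclusions; your log-sum inequality and the substitution trick are arguably a bit cleaner, while the paper's $|\ln a|\le a+1/a$ bound has the advantage of producing the one-sided $\theta^{\rho}$ term immediately so that the induction mirrors the $\delta\neq1$ case verbatim.
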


\begin{remark}
	If $\beta e^{ \gamma_u / 2} < 1$, then claims 1--3 of 
	proposition \ref{pr:js_exog_log} remain true for $|\rho| = 1$, and 
	claim 4 remains true for $\rho = 1$.
\end{remark}

We choose $\beta=0.95$ and $\tilde{c}_0=0.6$ as in 
\cite{ljungqvist2012recursive} (section 6.6). Further, $\mu_\eta = 0$, 
$\gamma_\eta = 10^{-6}$, $\gamma_\xi = 5 \times 10^{-4}$, 
$\gamma_u = 10^{-4}$ and $\delta = 2.5$. We consider parametric class 
problems with respect to $\rho$, where $\rho \in [0,1]$ and $\rho \in [-1,0]$ are 
treated separately, with $100$ grid points in each case. Moreover, the grid points 
of $\theta$ lie in $[10^{-4}, 10]$ with $200$ points, and the grid is scaled to be more dense when $\theta$ is smaller. The reservation wage outside the 
grid points is set to its value at the closest grid, and the integration is computed via 
Monte Carlo with 1000 draws.

\begin{figure}[h]
\centering
\begin{minipage}{.55\textwidth}
  \centering
  \includegraphics[width=1\linewidth]{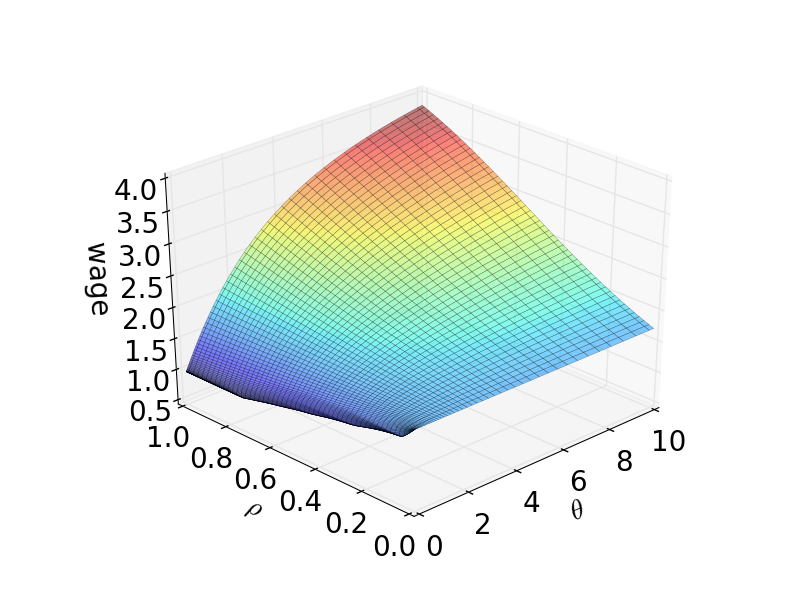}
\end{minipage}%
\begin{minipage}{.55\textwidth}
  \centering
  \includegraphics[width=1\linewidth]{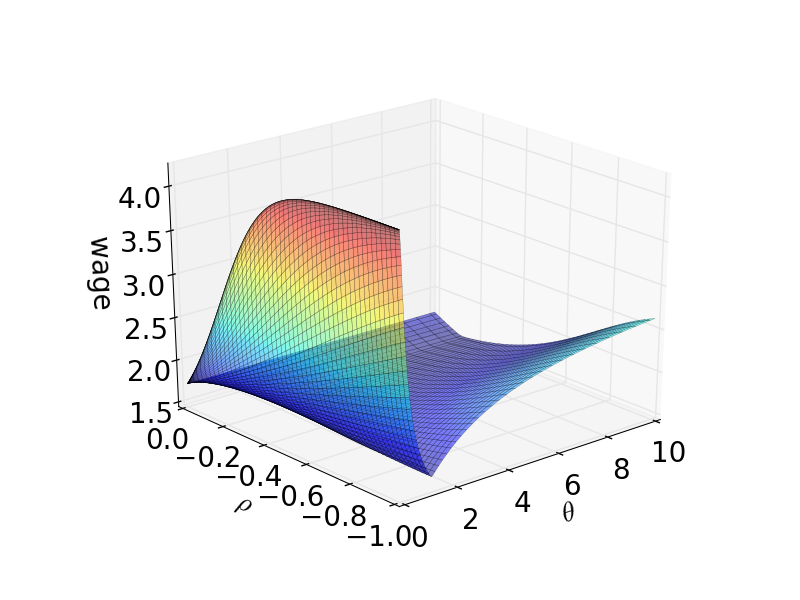}
\end{minipage}
\caption{The reservation wage}
\label{fig:res_wage_sv}
\end{figure}

When $\rho = 0$, the state process $\{\theta_t \}_{t \geq 0}$ is independent and identically distributed, in which case each realized persistent component will be forgotten in future stages. As a result, the continuation value is independent of $\theta$, yielding a reservation wage that is horizontal to the $\theta$-axis, as shown in figure \ref{fig:res_wage_sv}.

When $\rho >0$, the reservation wage is increasing in $\theta$, which 
parallels propositions \ref{pr:js_exog}--\ref{pr:js_exog_log}. Naturally, a higher $\theta$ puts the agent in a better situation, raising his desired wage level. 
Moreover, since $\rho$ measures the degree of income persistence, a higher 
$\rho$ prolongs the recovery from bad states (i.e., $\theta < 1$), and hinders the attenuation of good states (i.e., $\theta > 1$). 
As a result, the reservation wage tends to be decreasing in $\rho$ when $\theta<1$ and increasing in $\rho$ when $\theta > 1$.

When $\rho < 0$, the agent always has chance to arrive at a good state
in future stages. In this case, a very bad or a very good current state is favorable since a very bad state tends to evolve into a very good one next period, and 
a very good state tends to show up again in two periods.
If the current state is at a medium level (e.g., $\theta$ is close to $1$), however, 
the agent cannot take advantage of the countercyclical patterns. Hence, the reservation wage is decreasing in $\theta$ at the beginning and then starts to be increasing in $\theta$ after some point.

\section{Extensions}
\label{s:extension}

\subsection{Repeated Sequential Decisions}

In many economic models, the choice to stop is not permanent.  For example,
when a worker accepts a job offer, the resulting job might only be temporary
(see, e.g., \cite{rendon2006job}, \cite{ljungqvist2008two}, \cite{poschke2010regulation}, 
\cite{chatterjee2012spinoffs}, \cite{lise2012job}, \cite{moscarini2013stochastic},
\cite{bagger2014tenure}).
Another example is sovereign default (see, e.g.,  \cite{choi2003optimal}, \cite{albuquerque2004optimal}, \cite{arellano2008default},
\cite{alfaro2009optimal}, \cite{arellano2012default}, \cite{bai2012financial}, 
\cite{chatterjee2012maturity}, \cite{mendoza2012general},
\cite{hatchondo2016debt}), 
where default on international debt leads to a period of exclusion from
international financial markets.  The exclusion is not permanent, however.
With positive probability, the country exits autarky and begins borrowing from
international markets again.

To put this type of problem in a general setting, suppose that, at date $t$,
an agent is either \emph{active} or \emph{passive}.  When active, the agent
observes $Z_t$ and chooses whether to continue or exit.  Continuation results
in a current payoff $c(Z_t)$ and the agent remains active at $t+1$.  Exit
results in a current payoff $s(Z_t)$ and transition to the passive state.
From there the agent has no action available, but will return to the active
state at $t+1$ and all subsequent period with probability $\alpha$.  
\begin{assumption}
\label{a:unbdd_drift_ext1}
	There exist a $\zZ$-measurable function $g: \ZZ \rightarrow \RR_+$ and 	
	constants $n \in \NN_0$, $m,d \in \RR_+$ such that $\beta m <1$, and, 
	for all $z \in \ZZ$,
	\begin{enumerate}
		\item $\max \left\{ 
									\int |s(z')| P^n (z, \diff z'),
									\int |c(z')| P^n (z, \diff z')
				  			  \right\}
					\leq g(z)$; 
						
		\item $\int g(z') P(z, \diff z')
					\leq
			   				m g(z) + d.$
	\end{enumerate}
\end{assumption}
Let $v^*(z)$ and  $r^*(z)$ be the maximal discounted value starting at $z
\in \ZZ$ in the active and passive state respectively. One can show that, under 
assumption \ref{a:unbdd_drift_ext1}, $v^*$ and $r^*$ satisfy\footnote{	
	A formal proof of this statement is available from the authors upon request.}
\begin{equation}
    \label{eq:rst1}
    v^*(z) 
    = \max \left\{ r^*(z), 
    						c(z) + \beta \int v^*(z') P(z, \diff z') 
    			 \right\}
\end{equation}
and
\begin{equation}
    \label{eq:rst2}
    r^*(z) = s(z) + \alpha \beta \int v^*(z') P(z, \diff z') 
    			 + (1 - \alpha) \beta \int r^*(z') P(z, \diff z').
\end{equation}
With $\psi^* := c + \beta P v^*$ we can write $v^* = r^* \vee \psi^*$.
Using this notation, we can view $\psi^*$ and $r^*$ as solutions to the 
functional equations
\begin{equation}
    \label{eq:rst3}
    \psi = c + \beta P (r \vee \psi)
    \quad \text{and} \quad
    r = s + \alpha \beta P(r \vee \psi) + (1 - \alpha) \beta P r.
\end{equation}

Choose $m', d' >0$ such that $m+2m' >1$, $\beta (m+2m') <1$ and 
$d' \geq \frac{d}{m + 2m' - 1}$.
Consider the weight function $\kappa: \ZZ \rightarrow \RR_+$ defined by
\begin{equation}
\label{eq:kappa}
	\kappa (z) := 
			m' \sum_{t=0}^{n-1} 
				 \EE_z \left[ |s(Z_t)| + |c(Z_t)| \right] 
			+ g(z) + d'
\end{equation}
and the product space $(b_{\kappa} \ZZ \times b_{\kappa} \ZZ, \rho_{\kappa})$, where $\rho_{\kappa}$ is a metric on $b_{\kappa} \ZZ \times b_{\kappa} \ZZ$ defined by 
\begin{equation*}
    \rho_{\kappa} ((\psi, r), (\psi', r')) 
    = \| \psi - \psi' \|_{\kappa} \vee \| r - r'\|_{\kappa}.
\end{equation*}
With this metric, $(b_{\kappa} \ZZ \times b_{\kappa} \ZZ, \rho_{\kappa})$ inherits the completeness of
$(b_{\kappa} \ZZ, \| \cdot \|_{\kappa})$. Now define the operator $L$ on 
$b_{\kappa} \ZZ \times b_{\kappa} \ZZ$ by
\begin{equation*}
    L \begin{pmatrix}
            \psi \\
            r
        \end{pmatrix}
    = \begin{pmatrix}
        c + \beta P(r \vee \psi) \\
        s + \alpha \beta P(r \vee \psi) + (1 - \alpha) \beta P r 
        \end{pmatrix}.
\end{equation*}
\begin{theorem}
\label{thm:keythm_ext_1}
	Under assumption \ref{a:unbdd_drift_ext1}, the following statements hold:
	\begin{enumerate}
		\item[1.] $L$ is a contraction mapping on 
			$(b_{\kappa} \ZZ \times b_{\kappa} \ZZ, \rho_{\kappa})$ with modulus
			$\beta(m+2m')$.		
		\item[2.] The unique fixed point of $L$ in 
			$b_{\kappa} \ZZ \times b_{\kappa} \ZZ$ is $h^* := (\psi^*, r^*)$.
	\end{enumerate}
\end{theorem}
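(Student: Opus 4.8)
The plan is to follow the proof of Theorem~\ref{t:bk} almost verbatim, replacing the Jovanovic operator by the coupled map $L$ and the weight $\ell$ by $\kappa$. The one structural ingredient I would establish first is the geometric drift inequality $P\kappa \le (m+2m')\,\kappa$ on $\ZZ$. To prove it, apply $P$ to the definition \eqref{eq:kappa} of $\kappa$, shift the summation index by one, bound the two resulting top-order terms $\EE_z|s(Z_n)|$ and $\EE_z|c(Z_n)|$ by $g(z)$ using assumption~\ref{a:unbdd_drift_ext1}(1), bound $Pg$ by $mg+d$ using assumption~\ref{a:unbdd_drift_ext1}(2), and absorb the residual constant $d$ using $m+2m'\ge 1$ together with $d \le (m+2m'-1)d'$, which holds by the choice of $d'$. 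This is the same calculation carried out for $\ell$ in the appendix proof of Theorem~\ref{t:bk}, so I would present it briefly.

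Claim~1 then follows in two short steps. First, $L$ maps $b_\kappa\ZZ \times b_\kappa\ZZ$ into itself: $c$ and $s$ are $\kappa$-bounded because they appear (up to the factor $m'$) in the $t=0$ summand of $\kappa$ when $n\ge 1$ and satisfy $|s|\vee|c|\le g\le\kappa$ when $n=0$, while for $\kappa$-bounded $\psi,r$ the functions $P(r\vee\psi)$ and $Pr$ are dominated by a constant multiple of $P\kappa\le(m+2m')\kappa$. Second, for the contraction estimate I would use the elementary pointwise bound $|a\vee b-a'\vee b'|\le|a-a'|\vee|b-b'|$, so that whenever $\rho_\kappa\big((\psi,r),(\psi',r')\big)=\varepsilon$ one has $\big|(r\vee\psi)(z)-(r'\vee\psi')(z)\big|\le\varepsilon\,\kappa(z)$ and likewise $|r(z)-r'(z)|\le\varepsilon\,\kappa(z)$. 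Applying $P$ and the drift inequality to each coordinate of $L(\psi,r)-L(\psi',r')$, and recombining the two terms in the second coordinate via $\alpha+(1-\alpha)=1$, yields $\|\cdot\|_\kappa\le\beta(m+2m')\,\varepsilon$ in both coordinates, hence $\rho_\kappa\big(L(\psi,r),L(\psi',r')\big)\le\beta(m+2m')\,\rho_\kappa\big((\psi,r),(\psi',r')\big)$. Since $\beta(m+2m')<1$ and $(b_\kappa\ZZ\times b_\kappa\ZZ,\rho_\kappa)$ is complete, Banach's contraction mapping theorem supplies a unique fixed point, proving claim~1 and the uniqueness half of claim~2.

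It remains to identify that fixed point with $(\psi^*,r^*)$. By \eqref{eq:rst1}--\eqref{eq:rst2} and the definition $\psi^*=c+\beta Pv^*$, the pair $(\psi^*,r^*)$ solves the system \eqref{eq:rst3}, i.e.\ $L(\psi^*,r^*)=(\psi^*,r^*)$, so it suffices to check that $\psi^*$ and $r^*$ lie in $b_\kappa\ZZ$. This is the step I expect to be the main obstacle, because the alternation between the active and passive states makes the underlying program less transparent than the plain optimal stopping problem of Theorem~\ref{t:bk}. The key observation is that along any feasible plan the period-$t$ payoff is one of $c(Z_t)$, $s(Z_t)$, or $0$, so both $|v^*(z)|$ and $|r^*(z)|$ are bounded above by $\EE_z\sum_{t\ge 0}\beta^t\big(|s(Z_t)|+|c(Z_t)|\big)$; the argument behind \eqref{eq:bdsum} in Lemma~\ref{lm:bd_vcv}---split the sum at $t=n$, bound the tail using assumption~\ref{a:unbdd_drift_ext1}(1) and iteration of the drift condition, and use $g\le\kappa$---shows this majorant is a constant multiple of $\kappa$. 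Hence $v^*,r^*\in b_\kappa\ZZ$, and then $\psi^*=c+\beta Pv^*$ is $\kappa$-bounded as well. Uniqueness of the fixed point then gives $h^*=(\psi^*,r^*)$, completing the proof.
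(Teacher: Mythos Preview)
Your proposal is correct and follows the paper's proof essentially step for step: the drift inequality $P\kappa \le (m+2m')\kappa$, the direct contraction estimate via the pointwise bound $|a\vee b - a'\vee b'| \le |a-a'|\vee|b-b'|$ applied coordinatewise, and the $\kappa$-boundedness of $(\psi^*, r^*)$ via the tail-sum argument of Lemma~\ref{lm:bd_vcv} are exactly what the paper does. The only cosmetic difference is that the paper shows the contraction by a direct Lipschitz estimate rather than by Boyd's monotonicity-plus-discounting criterion used in Theorem~\ref{t:bk}, which is precisely the route you end up describing anyway.
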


\subsection{Sequential Decision with More Choices}
\label{ss:more_choice}

In many economic problems, agents face multiple choices in the sequential decision process (see, e.g., \cite{crawford2005uncertainty}, \cite{cooper2007search},
\cite{vereshchagina2009risk}, \cite{low2010wage}, \cite{moscarini2013stochastic}). A standard example is on-the-job search, where an employee can choose from quitting the job market and taking the unemployment compensation, staying in the current job at a flow wage, or searching for a new job (see, e.g., \cite{jovanovic1987work},  \cite{bull1988mismatch}, \cite{gomes2001equilibrium}). A common characteristic of this type of problem is that different choices lead to different transition probabilities.

To treat this type of problem generally, suppose that in period $t$, the agent observes $Z_t$ and makes choices among $N$ alternatives. A selection of alternative $i$ results in a current payoff $r_i (Z_t)$ along with a stochastic kernel $P_i$. We assume the following.
\begin{assumption}
\label{a:unbdd_drift_ext}
	There exist a $\zZ$-measurable function $g: \ZZ \rightarrow \RR_+$ and 
	constants $m, d \in \RR_+$ such that $\beta m <1$, and, for all $z \in \ZZ$ and 
	$i, j = 1, ..., N$,
	\begin{enumerate}
		\item $\int |r_i (z')| P_j (z, \diff z') \leq g(z)$,	
		\item $\int g(z') P_i (z, \diff z') \leq m g(z) + d$.
	\end{enumerate}
\end{assumption}
Let $v^*$ be the value function and $\psi^*_i$ be the expected value of choosing alternative $i$. Under assumption \ref{a:unbdd_drift_ext}, we can show that $v^*$ and $(\psi_i^*)_{i=1}^N$ satisfy\footnote{
		A formal proof of this result is available from the authors upon request.}
\begin{equation}
\label{eq:vf_ext}
	v^*(z) = \max \{ \psi_1^* (z), ..., \psi_N^* (z)\},
\end{equation}
where
\begin{equation}
\label{eq:cvf_raw_ext}
	\psi_i^*(z) = r_i (z) 
					+ \beta \int v^*(z') P_i (z, \diff z'),
\end{equation}
for $i = 1, ..., N$. \eqref{eq:vf_ext}--\eqref{eq:cvf_raw_ext} imply that $\psi_i^*$ can be written as
\begin{equation}
\label{eq:cvf_ext}
	\psi_i^*(z) = r_i (z)
					+ \beta \int \max \{
													\psi_1^*(z'), ..., \psi_N^*(z')
												  \}
								  P_i (z, \diff z').
\end{equation}
for $i = 1, ..., N$. Define the continuation value function 
$\psi^*:= (\psi_1^*, ..., \psi_N^*)$. 

Choose $m', d' \in \RR_{++}$ such that $\beta(Nm' + m) <1$ and 
$d' \geq \frac{d}{Nm' + m -1}$. Consider the weight function 
$k: \ZZ \rightarrow \RR_+$ defined by 
\begin{equation}
	k (z) := m' \sum_{i=1}^N |r_i(z)| + g(z) + d'.
\end{equation}
One can show that the product space 
$\left(
		  \times _{i=1} ^N (b_k \ZZ), \rho_k 
\right)$ 
is a complete metric space, where $\rho_k$ is defined by 
$\rho_k (\psi, \tilde{\psi}) 
	= \vee_{i=1}^N
		\| \psi_i - \tilde{\psi}_i \|_k$
for all $\psi = (\psi_1, ..., \psi_N)$, 
$\tilde{\psi} = (\tilde{\psi}_1, ..., \tilde{\psi}_N) \in \times_{i=1}^N (b_k \ZZ)$. 
The Jovanovic operator on 
$\left( 
		  \times_{i=1}^N (b_k \ZZ), \rho_k
  \right)$
is defined by
\begin{equation}
\label{eq:cvo_ext}
	Q \psi 
	 = Q \begin{pmatrix}
	 		   \psi_1 \\
	 		   ... \\
	 		   \psi_N
	 	    \end{pmatrix}
	 = \begin{pmatrix}
	 		r_1 + \beta P_1 (\psi_1 \vee ... \vee \psi_N) \\
	 		... ... \\
	 		r_N + \beta P_N (\psi_1 \vee ... \vee \psi_N)
		 \end{pmatrix}.
\end{equation}

The next result is a simple extension of theorem \ref{thm:keythm_ext_1} and we omit its proof.

\begin{theorem}
\label{thm:keythm_ext2}
	Under assumption \ref{a:unbdd_drift_ext}, the following statements hold:
	\begin{enumerate}
		\item[1.] $Q$ is a contraction mapping on 
			  	$\left(
		 			 \times _{i=1} ^N (b_k \ZZ), \rho_k 
			  	\right)$
			  of modulus $\beta(Nm' + m)$.
		\item[2.] The unique fixed point of $Q$ in
				$\left(
		 			 \times _{i=1} ^N (b_k \ZZ), \rho_k 
			  	\right)$
			  	is $\psi^* = (\psi_1^*, ..., \psi_N^*)$. 
	\end{enumerate}
\end{theorem}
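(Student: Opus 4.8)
The plan is to transcribe the proof of Theorem~\ref{thm:keythm_ext_1} (itself a variant of Theorem~\ref{t:bk}) to the $N$-fold product setting, in three steps: (i) show that $Q$ is a self-map of $\times_{i=1}^N (b_k\ZZ)$; (ii) establish the contraction estimate with modulus $\beta(Nm'+m)$; (iii) identify the unique fixed point with $\psi^* = (\psi_1^*,\dots,\psi_N^*)$. The structural fact driving (i) and (ii) is that the single weight function $k$ satisfies a geometric drift condition under \emph{every} kernel $P_i$ simultaneously.

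For step (i) I would first verify that drift property. From $k(z) = m'\sum_{j=1}^N |r_j(z)| + g(z) + d'$ and Assumption~\ref{a:unbdd_drift_ext}, for each $i$ we have $\int k(z') P_i(z,\diff z') = m'\sum_{j=1}^N \int |r_j(z')| P_i(z,\diff z') + \int g(z') P_i(z,\diff z') + d' \le Nm' g(z) + m g(z) + d + d'$; since the choice $d' \ge d/(Nm'+m-1)$ gives $d+d' \le (Nm'+m)d'$ and $g \le k$, this yields $\int k(z') P_i(z,\diff z') \le (Nm'+m) k(z)$ for all $z\in\ZZ$ and $i=1,\dots,N$. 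Combining this with the pointwise bounds $|r_i| \le k/m'$ (immediate from the definition of $k$) and $|\psi_1 \vee \cdots \vee \psi_N| \le \big(\vee_{j}\|\psi_j\|_k\big)\,k$, each component $(Q\psi)_i = r_i + \beta P_i(\psi_1 \vee \cdots \vee \psi_N)$ is $\zZ$-measurable and $k$-bounded whenever $\psi = (\psi_1,\dots,\psi_N) \in \times_{i=1}^N (b_k\ZZ)$, so $Q$ maps the product space into itself.

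For step (ii) I would use the elementary fact that the finite maximum is nonexpansive in sup-norm: $|\vee_{j} a_j - \vee_{j} b_j| \le \vee_{j} |a_j - b_j|$ for reals $a_1,\dots,a_N$, $b_1,\dots,b_N$. Applied pointwise to $\psi=(\psi_i)$ and $\tilde\psi=(\tilde\psi_i)$ it gives $\big|(\psi_1 \vee \cdots \vee \psi_N)(z') - (\tilde\psi_1 \vee \cdots \vee \tilde\psi_N)(z')\big| \le \big(\vee_{j}\|\psi_j - \tilde\psi_j\|_k\big) k(z') = \rho_k(\psi,\tilde\psi)\,k(z')$. Hence for every $i$, $|(Q\psi)_i(z) - (Q\tilde\psi)_i(z)| = \beta\big|P_i(\psi_1\vee\cdots\vee\psi_N)(z) - P_i(\tilde\psi_1\vee\cdots\vee\tilde\psi_N)(z)\big| \le \beta\rho_k(\psi,\tilde\psi)\int k(z') P_i(z,\diff z') \le \beta(Nm'+m)\,\rho_k(\psi,\tilde\psi)\,k(z)$; dividing by $k(z)$, taking the supremum over $z$ and then the maximum over $i$ gives $\rho_k(Q\psi, Q\tilde\psi) \le \beta(Nm'+m)\,\rho_k(\psi,\tilde\psi)$. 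Since $\beta(Nm'+m)<1$ by the choice of $m'$ and $(\times_{i=1}^N(b_k\ZZ), \rho_k)$ is complete, Banach's contraction mapping theorem delivers a unique fixed point, which is claim~1.

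It remains to identify this fixed point. That $Q\psi^* = \psi^*$ is precisely \eqref{eq:cvf_ext}, which follows from \eqref{eq:vf_ext}--\eqref{eq:cvf_raw_ext}; so by uniqueness it suffices to check $\psi^* \in \times_{i=1}^N(b_k\ZZ)$, i.e. that each $\psi_i^*$ is $k$-bounded. I would establish this along the lines of Lemma~\ref{lm:bd_vcv}: iterating Assumption~\ref{a:unbdd_drift_ext}-(2) along an arbitrary selection strategy $\sigma$ gives $\EE_z^\sigma g(Z_t) \le m^t g(z) + d/(1-m)$, while Assumption~\ref{a:unbdd_drift_ext}-(1) controls $\EE_z^\sigma |r_{\sigma_t}(Z_t)|$ by a constant multiple of $\EE_z^\sigma g(Z_{t-1})$; summing the resulting geometric series (convergent since $\beta m<1$) bounds $\int|v^*(z')|P_i(z,\diff z')$ by $a_1 g(z) + a_2$ for some $a_1,a_2\ge0$, whence $|\psi_i^*| \le |r_i| + \beta(a_1 g + a_2) \le C_i k$ using $g\le k$, $d'\le k$ and $|r_i|\le k/m'$. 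Uniqueness of the fixed point then forces it to equal $\psi^*$, giving claim~2. The only genuinely new ingredient relative to Theorems~\ref{t:bk} and~\ref{thm:keythm_ext_1} is the nonexpansiveness of the vector maximum in step (ii), which is elementary; the main obstacle, exactly as in Theorem~\ref{t:bk}, is the $k$-boundedness bookkeeping for $\psi^*$ in step (iii), which here is a routine extension of the single-kernel estimate to finitely many kernels.
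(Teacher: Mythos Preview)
Your proposal is correct and matches the paper's intended argument: the paper omits the proof entirely, stating only that it is ``a simple extension of theorem~\ref{thm:keythm_ext_1}.'' Your three steps---the drift bound $\int k\,\diff P_i \le (Nm'+m)k$ for each kernel, the contraction via the nonexpansiveness of the $N$-fold pointwise maximum, and the identification of the fixed point using a Lemma~\ref{lm:bd_vcv}--type estimate over arbitrary selection strategies---are precisely the transcription of the proof of Theorem~\ref{thm:keythm_ext_1} to the multi-kernel setting that the authors have in mind.
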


\begin{example}
Consider the on-the-job search model of \cite{bull1988mismatch}. Each 
period, an employee has three choices: quit the job market, stay in the 
current job, or search for a new job. Let $c_0$ be the value of leisure and $\theta$ 
be the worker's productivity at a given firm, with 
$(\theta_t)_{ t\geq 0} \iidsim G(\theta)$. Let $p$ be the current price. The price 
sequence $(p_t)_{t \geq 0}$ is Markov with transition probability $F(p'|p)$ and stationary distribution $F^*(p)$. It is assumed that there is 
no aggregate shock so that $F^*$ is the distribution of prices over firms. The 
current wage of the worker is $p \theta$. The value function satisfies
$v^* = \psi_1^* \vee \psi_2^* \vee \psi_3^*$, where 
\begin{equation*}
	\psi_1^* (p, \theta) 
		:= c_0 + 
			\beta \int v^* (p', \theta') \diff F^*(p') \diff G(\theta')
\end{equation*}
denotes the expected value of quitting the job, 
\begin{equation*}
	\psi_2^* (p, \theta) 
		:= p \theta + 
			\beta \int v^*(p', \theta) \diff F(p'|p)
\end{equation*}
is the expected value of staying in the current firm, and, 
\begin{equation*}
	\psi_3^* (p, \theta)
		:= p \theta +
			\beta \int v^*(p', \theta') \diff F^*(p') \diff G(\theta')
\end{equation*} 
represents the expected value of searching for a new job.
\cite{bull1988mismatch} assumes that there are compact supports $[\underline{\theta} , \bar{\theta}]$ and $[\underline{p}, \bar{p}]$ for the state processes $(\theta_t)_{t \geq 0}$ and $(p_t)_{t \geq 0}$, where $0 < \underline{\theta} < \bar{\theta} < \infty$ and $0 < \underline{p} < \bar{p} < \infty$. This assumption can be relaxed based on our theory. Let the state space be $ \ZZ := \RR_+^2$. Let $\mu_p := \int p \diff F^* (p)$ and
$\mu_{\theta} := \int \theta \diff G(\theta)$.
\begin{assumption}
\label{a:unbdd_bj}
	There exist a Borel measurable map $\tilde{g}: \RR_+ \rightarrow \RR_+$, and 
	constants	$\tilde{m}, \tilde{d} \in \RR_+$ such that $\beta \tilde{m} < 1$, 
	and, for all $p \in \RR_+$,
	\begin{enumerate}
		\item $\int p' \diff F(p'|p) \leq \tilde{g}(p)$,
		\item $\int \tilde{g}(p') \diff F(p'|p) 
						\leq \tilde{m} \tilde{g}(p) + \tilde{d}$,
		\item $\mu_p, \mu_{\theta} < \infty$ and
				  $\mu_{\tilde{g}} : = \int \tilde{g}(p) \diff F^*(p) < \infty$.
	\end{enumerate}
\end{assumption}
Let $\tilde{m} > 1$ and $\tilde{m}' \geq d / (\tilde{m}-1)$, then assumption 
\ref{a:unbdd_drift_ext} holds by letting 
$g(p, \theta) 
		:= \theta (\tilde{g}(p) + \tilde{m}' )$,
$m := \tilde{m}$ and $d := \mu_{\theta} (\mu_{\tilde{g}} + m')$.
By theorem \ref{thm:keythm_ext2}, $Q$ is a contraction mapping on 
$\left( \times_{i=1}^3 b_{\ell} \ZZ,
			\rho_{\ell} 
  \right)$. 
Obviously, assumption \ref{a:unbdd_bj} is weaker than the assumption of compact supports. 
\end{example}

\section{Conclusion}
\label{s:conclude}

A comprehensive theory of optimal timing of decisions was developed here. The theory successfully addresses a wide range of unbounded sequential decision problems that are hard to deal with via existing unbounded dynamic programming theory, including both the traditional weighted supremum norm theory and the local contraction theory. Moreover, this theory characterizes the continuation value function directly, and has obvious advantages over the traditional dynamic programming theory based on the value function and Bellman operator. First, since continuation value functions are typically smoother than value functions, this theory allows for shaper analysis of the optimal policies and more efficient computation.
Second, when there is conditional independence along the transition path (e.g., the class of threshold state problems), this theory mitigates the curse of dimensionality, a key stumbling block for numerical dynamic programming.

\section*{Appendix A}

\begin{lemma}
\label{lm:bd_vcv}
	Under assumption \ref{a:ubdd_drift_gel}, there exist $a_1, a_2 \in \RR_{+}$ 
	such that for all $z \in \ZZ$,  
	\begin{enumerate}
		\item $|v^*(z)| \leq \sum_{t=0}^{n-1} 
											\beta^t \EE_z [|r(Z_t)| + |c(Z_t)|]
										+ a_1 g(z) + a_2$.
		\item $|\psi^*(z)| 
						\leq 
						\sum_{t=1}^{n-1} 
								\beta^t \EE_z |r(Z_t)| + 
						\sum_{t=0}^{n-1}
								\beta^t \EE_z |c(Z_t)|
						+ a_1 g(z) + a_2$.
	\end{enumerate}
\end{lemma}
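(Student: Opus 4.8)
The plan is to establish claim (1) first and then deduce claim (2) from the identity $\psi^* = c + \beta P v^*$ in \eqref{eq:cvf}. For claim (1), the starting point is the pathwise bound already used in the main text: for every $\tau \in \mM$,
\[
  \left| \sum_{t=0}^{\tau-1}\beta^t c(Z_t) + \beta^{\tau} r(Z_{\tau}) \right|
  \;\le\; \sum_{t\ge 0}\beta^t\bigl(|r(Z_t)| + |c(Z_t)|\bigr)
  \qquad \PP_z\text{-a.s.}
\]
Taking $\EE_z$ of both sides (legitimate by Tonelli, the right side being nonnegative) and then the supremum over $\tau$ yields $|v^*(z)| \le S(z) := \sum_{t\ge 0}\beta^t\EE_z[|r(Z_t)| + |c(Z_t)|]$. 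It then suffices to bound the tail $\sum_{t\ge n}\beta^t\EE_z[|r(Z_t)|+|c(Z_t)|]$ by $a_1 g(z) + a_2$ for constants not depending on $z$; adding the first $n$ terms of $S(z)$ gives the stated estimate and, as a byproduct, the $\PP_z$-integrability asserted after \eqref{eq:cvf}.

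For the tail I would use Chapman--Kolmogorov together with \eqref{eq:bd}: for $t \ge n$,
\[
  \EE_z |r(Z_t)|
  = \int \Bigl( \int |r(z'')|\,P^n(z',\diff z'') \Bigr) P^{t-n}(z,\diff z')
  \le \int g(z')\,P^{t-n}(z,\diff z')
  = \EE_z g(Z_{t-n}),
\]
and likewise $\EE_z|c(Z_t)| \le \EE_z g(Z_{t-n})$. A one-line induction on $k$ using the drift inequality \eqref{eq:drift} gives $\EE_z g(Z_k) \le m^k g(z) + d\sum_{j=0}^{k-1}m^j$ for all $k \ge 0$. Substituting and re-indexing ($k = t-n$),
\[
  \sum_{t\ge n}\beta^t\EE_z[|r(Z_t)|+|c(Z_t)|]
  \le 2\beta^n \sum_{k\ge 0}\beta^k\EE_z g(Z_k)
  \le 2\beta^n\Bigl( g(z)\sum_{k\ge 0}(\beta m)^k + d\sum_{k\ge 0}\beta^k\sum_{j=0}^{k-1}m^j \Bigr),
\]
and swapping the order of summation in the last double sum (again Tonelli) gives $\sum_{k\ge 0}\beta^k\sum_{j=0}^{k-1}m^j = \beta/[(1-\beta)(1-\beta m)]$. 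Both series converge precisely because $\beta m < 1$, so claim (1) holds with $a_1 = 2\beta^n/(1-\beta m)$ and $a_2 = 2 d\,\beta^{n+1}/[(1-\beta)(1-\beta m)]$.

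For claim (2), \eqref{eq:cvf} gives $|\psi^*(z)| \le |c(z)| + \beta\,\EE_z|v^*(Z_1)|$. Inserting the bound from claim (1) inside the expectation and using the tower property to shift the time index ($\EE_z\EE_{Z_1}[h(Z_t)] = \EE_z h(Z_{t+1})$) turns $\beta\,\EE_z\sum_{t=0}^{n-1}\beta^t\EE_{Z_1}[|r(Z_t)|+|c(Z_t)|]$ into $\sum_{t=1}^{n}\beta^t\EE_z[|r(Z_t)|+|c(Z_t)|]$, while $\beta a_1 \EE_z g(Z_1) \le \beta a_1 m\,g(z) + \beta a_1 d$ by \eqref{eq:drift}. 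Folding $|c(z)| = \beta^0\EE_z|c(Z_0)|$ into the $c$-sum and absorbing the two leftover $t=n$ terms $\beta^n\EE_z|r(Z_n)|$ and $\beta^n\EE_z|c(Z_n)|$ into the $g$-term via \eqref{eq:bd} produces $|\psi^*(z)| \le \sum_{t=1}^{n-1}\beta^t\EE_z|r(Z_t)| + \sum_{t=0}^{n-1}\beta^t\EE_z|c(Z_t)| + \tilde a_1 g(z) + \tilde a_2$ with $\tilde a_1 = \beta a_1 m + 2\beta^n$ and $\tilde a_2 = \beta a_1 d + \beta a_2$; replacing $a_1,a_2$ by the maxima of the two pairs of constants makes both claims hold with the same $a_1,a_2$. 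The degenerate case $n=0$ is covered by the same computation, using $|r|\vee|c| \le g$ from \eqref{eq:bd} in place of the empty sums.

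The main obstacle is purely organizational: keeping straight the two distinct index shifts — the $n$-step shift coming from \eqref{eq:bd} and the one-step shift coming from \eqref{eq:drift} — and preserving the asymmetry in claim (2) between the $r$-sum (which starts at $t=1$) and the $c$-sum (which starts at $t=0$). There is no analytic subtlety: every quantity in sight is nonnegative, so all interchanges of sums, integrals and suprema are justified by Tonelli, and all estimates are valid as statements in $[0,\infty]$ before finiteness is concluded.
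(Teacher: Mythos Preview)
Your proof is correct and follows essentially the same route as the paper: bound $|v^*(z)|$ by $\sum_{t\ge 0}\beta^t\EE_z[|r(Z_t)|+|c(Z_t)|]$, use the Markov property with \eqref{eq:bd} to reduce the tail $t\ge n$ to $\EE_z g(Z_{t-n})$, and iterate the drift condition \eqref{eq:drift} to sum the geometric series, arriving at exactly the constants $a_1=2\beta^n/(1-\beta m)$ and $a_2=2d\beta^{n+1}/[(1-\beta)(1-\beta m)]$. The only organizational difference is that for claim (2) the paper writes $|\psi^*|\le|c|+S$ with $S(z):=\sum_{t\ge 1}\beta^t\EE_z[|r(Z_t)|+|c(Z_t)|]$ and reuses the same tail bound, whereas you feed claim (1) back through $\psi^*=c+\beta Pv^*$; in fact your $\tilde a_1=\beta a_1 m+2\beta^n$ and $\tilde a_2=\beta a_1 d+\beta a_2$ simplify exactly to $a_1$ and $a_2$, so the max is unnecessary.
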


\begin{proof}
	Without loss of generality, we assume $m \neq 1$. By assumption 
	\ref{a:ubdd_drift_gel}, we have $\EE_z |r(Z_n)| \leq g(z)$,
	$\EE_z |c(Z_n)| \leq g(z)$ and 
	$\EE_z g(Z_1) \leq m g(z) + d$ for all $z \in \ZZ$. For all $t \geq 1$, by the 
	Markov property (see, e.g., \cite{meyn2012markov}, section 3.4.3),
	\begin{align*}
		\EE_z g(Z_t) 
		&= \EE_z 
				\left[ 
					\EE_z \left(
								 g(Z_t)| \fF_{t-1} 
							  \right) 
				\right]
		= \EE_z \left( 
							\EE_{Z_{t-1}} g(Z_1) 
						\right)		\\
		&\leq \EE_z \left( 
							m g(Z_{t-1} )+ d 
						 \right)
		= m \EE_z g(Z_{t-1}) + d.
	\end{align*}
	Induction shows that for all $t \geq 0$,
	\begin{equation}
	\label{eq:bdg}
		\EE_z g(Z_t) 
			\leq
				 m^t g(z) + \frac{1 - m^t}{1 - m} d.
	\end{equation}
	Moreover, for all $t \geq n$, apply the Markov property again shows that
	\begin{align*}
		\EE_z |r(Z_t)|
		&= \EE_z \left[ 
							\EE_z \left( 
										  |r(Z_t)| | \fF_{t-n}
									  \right) 
						\right] 
		= \EE_z \left( 
							\EE_{Z_{t-n}} |r(Z_n)| 
						\right)
		\leq \EE_z g(Z_{t-n}).			
	\end{align*}
	Based on \eqref{eq:bdg} we know that
	\begin{equation}
	\label{eq:bdr}
		\EE_z |r(Z_t)| 
		\leq 
			m^{t-n} g(z) + \frac{1-m^{t-n}}{1-m} d.
	\end{equation}
	Similarly, for all $t \geq n$, we have
	\begin{equation}
	\label{eq:bdc}
		\EE_z |c(Z_t)| \leq \EE_z g(Z_{t-n}) 
			\leq m^{t-n} g(z) + \frac{1-m^{t-n}}{1-m} d.
	\end{equation}
	
	Based on \eqref{eq:bdg} - \eqref{eq:bdc}, we can show that
	\begin{align}
	\label{eq:bdsum}
		S(z) &:= \sum_{t\geq 1} 
					  \beta^{t} \EE_z 
										\left[ 
											|r(Z_t)| + |c(Z_t)| 
										\right]		\nonumber	\\
		&\leq  
			  \sum_{t=1}^{n-1} \beta^t \EE_z [|r(Z_t)| + |c(Z_t)|]  		
			  + \frac{2 \beta^n}{1-\beta m} g(z) 
			  + \frac{2 \beta^{n+1} d}{(1-\beta m)(1-\beta)}. 
	\end{align}
	Since $|v^*| \leq |r| + |c| + S$ and $|\psi^*| \leq |c| + S$,
	the two claims hold by letting $a_1 := \frac{2 \beta^n}{1-\beta m}$
	and $a_2 := \frac{2 \beta^{n+1} d}{(1-\beta m)(1-\beta)}$. 
	This concludes the proof.
\end{proof}

Denote $(X,\mathcal{X})$ as a measurable space and $(Y,\mathcal{Y},u)$ as a measure space.

\begin{lemma}
\label{lm:cont}	
	Let $p: Y \times X \rightarrow \RR$ be a measurable map that is  
	continuous in $x$. If there exists a measurable map 
	$q: Y \times X \rightarrow \RR_+$ that is continuous in $x$ with 
	$q(y,x) \geq |p(y,x)|$ for all $(y,x) \in Y \times X$, and that 
	$x \mapsto \int q(y,x) u(\diff y)$ is continuous, then the 
	mapping $x \mapsto \int p(y,x) u(\diff y)$ is continuous.
\end{lemma}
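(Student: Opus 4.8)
The plan is to recognize Lemma~\ref{lm:cont} as a parametrized version of the generalized dominated convergence theorem (Pratt's lemma), in which the dominating function is allowed to vary with the argument $x$, and to derive it from Fatou's lemma. Since in all the settings where the lemma is applied $X$ is a subset of a Euclidean space, continuity of $x \mapsto \int p(y,x)\, u(\diff y)$ is equivalent to sequential continuity, so I would fix $x \in X$, take an arbitrary sequence $(x_k) \subset X$ with $x_k \to x$, and set $f_k(y) := p(y,x_k)$, $f(y) := p(y,x)$, $g_k(y) := q(y,x_k)$ and $g(y) := q(y,x)$; these are all measurable in $y$. The hypotheses give, for each fixed $y \in Y$, that $f_k(y) \to f(y)$ and $g_k(y) \to g(y)$, together with $|f_k(y)| \le g_k(y)$ for every $k$. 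Moreover, since $x \mapsto \int q(y,x)\, u(\diff y)$ is a continuous \emph{real-valued} map, this integral is finite at every point; in particular $\int g \, du < \infty$ (so that $\int |f|\, du \le \int g\, du < \infty$, and likewise for each $f_k$), and $\int g_k\, du \to \int g\, du$.

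The core of the argument is the usual two-sided Fatou estimate. The functions $g_k + f_k$ and $g_k - f_k$ are nonnegative, and both converge pointwise (to $g+f$ and $g-f$ respectively). Applying Fatou's lemma to $(g_k + f_k)$,
\begin{equation*}
	\int (g+f)\, du \;\le\; \liminf_{k\to\infty} \int (g_k + f_k)\, du
	\;=\; \int g\, du + \liminf_{k\to\infty} \int f_k\, du,
\end{equation*}
where the equality uses that $\int g_k\, du \to \int g\, du$ is a finite limit. Since $\int g\, du$ is finite it may be cancelled, giving $\int f\, du \le \liminf_k \int f_k\, du$. Running the same argument with $(g_k - f_k)$ yields $\int f\, du \ge \limsup_k \int f_k\, du$. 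Combining the two inequalities shows $\lim_k \int f_k\, du = \int f\, du$, i.e.\ $\int p(y,x_k)\, u(\diff y) \to \int p(y,x)\, u(\diff y)$. As $(x_k)$ was an arbitrary sequence converging to $x$, the map $x \mapsto \int p(y,x)\, u(\diff y)$ is continuous, which is the claim.

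I do not anticipate a genuine obstacle. The only points needing a little care are (i) the finiteness of $\int g\, du$, which is what licenses the cancellations in the Fatou estimates and follows for free from the continuity (hence real-valuedness) of $x \mapsto \int q(y,x)\, u(\diff y)$; and (ii) joint measurability of $p$ and $q$, which is assumed and guarantees that $f_k, g_k$ are $\mathcal Y$-measurable so that the integrals are well defined. The reduction to sequences is harmless in every application of the lemma in this paper, where $X$ is metrizable, so I would not belabor the general topological case.
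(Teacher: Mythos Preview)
Your proof is correct and follows essentially the same approach as the paper: both apply Fatou's lemma to the nonnegative functions $q(y,x_k)\pm p(y,x_k)$, use convergence of $\int q(y,x_k)\,u(\diff y)$ to cancel and obtain the two-sided estimate $\limsup_k \int p(y,x_k)\,u(\diff y) \le \int p(y,x)\,u(\diff y) \le \liminf_k \int p(y,x_k)\,u(\diff y)$. Your write-up is in fact slightly more careful than the paper's in explicitly noting the finiteness of $\int g\,du$ needed for the cancellation.
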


\begin{proof}
	 Since $q(y,x) \geq |p(y,x)|$ for all $(y, x) \in Y \times X$, we know that $(y,x) 
	 \mapsto q(y,x) \pm p(y,x)$ are nonnegative measurable functions. Let $(x_n)$ be 
	 a sequence of $X$ with $x_n \rightarrow x$. By Fatou's	 lemma, we have
	\begin{align*}
		\int \liminf_{n \rightarrow \infty} [q(y,x_n) \pm p(y,x_n)] u(\diff y)	
		\leq \liminf_{n\rightarrow \infty} \int [q(y,x_n) \pm p(y,x_n)] u(\diff y).	
	\end{align*}
	From the given assumptions we know that $\underset{n\rightarrow \infty}{\lim} 
	\int q(y,x_n) u(\diff y) = q(y, x)$. Combine this result with the above inequality, 
	we have
	\begin{align*}
		\pm \int p(y,x) u(\diff y) \leq \liminf_{n \rightarrow \infty} \left( \pm \int 
														p(y,x_n)  u(\diff y) \right),
	\end{align*}
	where we have used the fact that for any two given sequences $(a_n)_{n\geq 0}$ 
	and $(b_n)_{n \geq 0}$ of $\RR$ with $\underset{n\rightarrow \infty}
	{\lim} a_n$ exists, we have: $\underset{n \rightarrow \infty}{\liminf}(a_n + b_n) = 
	\underset{n \rightarrow \infty}{\liminf} \mbox{ } a_n + \underset{n \rightarrow 
	\infty}{\liminf} \mbox{ } b_n$. So 
	\begin{align*}
	\label{eq:limsup}
		\limsup_{n \rightarrow \infty} \int p(y,x_n) u(\diff y)  \leq \int p(y,x) u(\diff y)   
		\leq \liminf_{n \rightarrow \infty} \int p(y,x_n) u(\diff y).   				
	\end{align*}
	Therefore, the mapping $x \mapsto \int p(y,x) u(\diff y)$ is continuous.
\end{proof}

\section*{Appendix B : Main Proofs}
\label{s:pr_js}

\subsection{Proof of Section \ref{s:opt_results} Results.}
\label{ss:cvals}

In this section, we prove examples \ref{eg:jsll}--\ref{eg:js_adap}. Note that in  example \ref{eg:jsll}, $P$ has a density 
representation $f(z'|z) = N(\rho z + b, \sigma^2)$.

\begin{proof}[Proof of example \ref{eg:jsll}]
	\textit{Case I:} $\delta \geq 0$ and $\delta \neq 1$. In this case, the exit payoff 
	is	$r(z) := e^{(1 - \delta) z} /((1- \beta) (1 - \delta))$.	
	Since $\int e^{(1- \delta) z'} f(z'|z) \diff z'
		= a_1 e^{\rho (1 - \delta) z}$
	for some constant $a_1 > 0$, induction shows that 
	\begin{equation}
	\label{eq:e_ntimes}
		\int e^{(1- \delta)z'} P^t (z, \diff z')
		= a_t e^{\rho^t (1 - \delta) z}
		\leq a_t 
				\left(
					 e^{\rho^t (1 - \delta) z} +
					 e^{\rho^t (\delta - 1) z}
				\right)
	\end{equation} 
	for some constant $a_t >0$ and all $t \in \NN$. Recall the definition of $\xi$
	in example \ref{eg:jsll}. Let $n \in \NN$ such that $\beta e^{|\rho^n| \xi} < 1$,
	$g(z) :=  e^{\rho^n (1 - \delta) z} +
				  e^{\rho^n (\delta - 1) z}$
	and $m := d:= e^{|\rho^n| \xi}$. By remark \ref{rm:suff_key_assu}, it remains to 
	show that	$g$ satisfies the geometric drift condition \eqref{eq:drift}.
	Let $\xi_1 := (1 - \delta) b$ and $\xi_2 := (1-\delta)^2 \sigma^2 /2$, then
	$\xi_1 + \xi_2 \leq \xi$, and we have\footnote{
		To obtain the second inequality of \eqref{eq:e_g}, note that either 
		$\rho^{n+1}(1-\delta) z \leq 0$
		or $\rho^{n+1} (\delta - 1) z \leq 0$. Assume without loss of generality that 
		the former holds, then $e^{\rho^{n+1} (1 - \delta) z} \leq 1$ and 
		$0 \leq \rho^{n+1} (\delta - 1) z
			 \leq \rho^n (\delta - 1) z \vee \rho^n (1 - \delta) z$.
		The latter implies that 
		$e^{\rho^{n+1} (\delta -1) z}
			\leq e^{\rho^n (1 - \delta) z} + e^{\rho^n (\delta - 1) z}$. Combine this 
		with 	$e^{\rho^{n+1} (1 - \delta) z} \leq 1$ yields the second inequality of 
		\eqref{eq:e_g}.}
	\begin{align}
	\label{eq:e_g}
		\int g(z') f(z'|z) \diff z' 
		&= e^{\rho^{n+1} (1 - \delta) z} 
			  e^{ 
			  		\rho^n \xi_1 + 
			        \rho^{2n} \xi_2
			  	  }		
			 + e^{\rho^{n+1} (\delta - 1)z} 
			 	 e^ {
			 	   	    -\rho^n \xi_1 + 
			  			\rho^{2n} \xi_2
			  		 	}		\nonumber \\
		& \leq \left( e^{\rho^{n+1} (1 - \delta) z}
							+ e^{\rho^{n+1} (\delta - 1) z}
					\right)
					e^{ |\rho^n| \xi} 			\nonumber \\		
		& \leq \left( e^{\rho^n (1 - \delta)z}
							+ e^{\rho^n (\delta - 1)z} + 1
					\right)
					e^{|\rho^n| \xi}			
		= m g(z) + d.
	\end{align}
	Since $\beta m = \beta e^{|\rho^n| \xi}<1$, $g$ satisfies the geometric drift 
	property, and assumption \ref{a:ubdd_drift_gel} holds. 
	
	\begin{remark}
	In fact, if $\rho \in [0,1)$,
	by \eqref{eq:e_ntimes}, we can also let $g(z) := e^{\rho^n (1 - \delta) z}$, then 
	\begin{align}
	\label{eq:eg_rhopos}
		\int g(z') f(z'|z) \diff z'
		&= e^{\rho^{n+1} (1 - \delta) z} 
			  e^{\rho^n \xi_1 + \rho^{2n} \xi_2} 
		\leq \left( e^{\rho^n (1 - \delta) z} + 1 \right)
				e^{\rho^n (\xi_1 + \xi_2)} 		\nonumber \\
		& \leq \left( e^{\rho^n (1 - \delta) z} + 1 \right)
					e^{\rho^n \xi}	= m g(z) + d. 	
	\end{align}
	In this way, we have a simpler $g$ with geometric drift property.
	\end{remark}
	
	\begin{remark}
	If $\rho \in [-1,1]$ and $\beta e^{\xi} < 1$, by 
	\eqref{eq:e_ntimes}--\eqref{eq:e_g}
	one can show that assumption \ref{a:ubdd_drift_gel} holds with 
	$n := 0$, $g(z) := e^{(1 - \delta) z} + e^{(\delta - 1) z}$ and 
	$m := d := e^{ \xi}$. In fact, if $\rho \in [0,1]$ and 
	$\beta e^{\xi_1 + \xi_2} <1$, by \eqref{eq:eg_rhopos} one can show that 
	assumption \ref{a:ubdd_drift_gel} holds
	with $n := 0$, $g(z) := e^{(1 - \delta) z}$ and $m := d:= e^{\xi_1 + \xi_2}$.
	In this way, we can treat nonstationary state process at the cost of some
	additional restrictions on parameter values.
	\end{remark}
	
	\textit{Case II:} $\delta = 1$. In this case, the exit payoff is 
	$r(z) = z / (1 - \beta)$. Let $n:=0$, $g(z) := |z|$, $m :=|\rho|$ and 
	$d := \sigma + |b|$.
	Since $(\epsilon_t)_{t \geq 0} \iidsim N(0, \sigma^2)$, 
	by Jensen's inequality,
	\begin{align*}
		\int g(z') f(z'|z) \diff z' 
		&= \EE_z |Z_1| \leq |\rho| |z| + |b| + \EE |\epsilon_1| 		\\
		&\leq |\rho| |z| + |b| + \sqrt{\EE (\epsilon_1^2)} 
		= |\rho| |z| + |b| + \sigma 
		= m g(z) + d.
	\end{align*}
	Since $\beta m = \beta |\rho| < 1$, assumption \ref{a:ubdd_drift_gel} holds. 
	This concludes the proof.
\end{proof}	
	
\begin{proof}[Proof of example \ref{eg:js_adap}]
\textit{Case I:} $\delta \geq 0$ and $\delta \neq 1$.
Recall the definition of $g$. We have
\begin{equation}
\label{eq:js_adap_er}
	\int 
		  w'^{1 - \delta} f(w' | \mu, \gamma) 
	\diff w' 
	= e^{ (1 - \delta)^2 \gamma_{\epsilon} / 2 }
			\cdot
		e^{ (1- \delta) \mu + (1 - \delta)^2 \gamma / 2 }
	= e^{ (1 - \delta)^2 \gamma_{\epsilon} / 2 }
		g(\mu, \gamma). 
\end{equation}
By remark \ref{rm:suff_key_assu}, it remains to verify the geometric drift condition 
\eqref{eq:drift}. This follows from \eqref{eq:pos_js_adap}--\eqref{eq:Ph}. Indeed, one can show that
\begin{equation}
\label{eq:js_adap_eg}
	\EE_{\mu, \gamma} g(\mu', \gamma')
	:=
	\int 
		  g(\mu', \gamma') f(w'| \mu, \gamma) 
	\diff w'
	= g(\mu, \gamma)
\end{equation}
\textit{Case II:} $\delta = 1$.
Since $|\ln a| \leq a + 1 / a$, $\forall a > 0$, we have: $|u(w')| \leq w' + w'^{-1}$, and
\begin{equation}
\label{eq:js_adap_eu}
	\int 
		  |u(w')| f(w'|\mu, \gamma) 
	\diff w'
	\leq e^{\mu + (\gamma + \gamma_\epsilon) / 2} +
			e^{-\mu + (\gamma + \gamma_\epsilon) / 2}
	= e^{\gamma_\epsilon} g(\mu, \gamma).
\end{equation}
Similarly as \textit{case I}, one can show that
$\EE_{\mu, \gamma} g(\mu', \gamma') = g(\mu, \gamma)$. 

Hence, assumption \ref{a:ubdd_drift_gel} holds in both cases. This concludes the proof.
\end{proof}

\begin{proof}[Proof of theorem~\ref{t:bk}]

	To prove claim 1, based on the weighted contraction mapping 
	theorem (see, e.g., \cite{boud1990recursive}, section 3), it suffices to verify: 
	(a) $Q$ is monotone, 
	i.e., $Q\psi	\leq Q\phi$ if $\psi, \phi \in b_{\ell} \ZZ$ and $\psi \leq \phi$; 
	(b) $Q0 \in b_{\ell} \ZZ$ and $Q \psi$ is $\zZ$-measurable for all 
	$\psi \in b_{\ell} \ZZ$; and 
	(c) $Q(\psi+a\ell)\leq Q\psi+a \beta (m + 2m') \ell$ for all $a\in\RR_{+}$ and 
	$\psi \in b_{\ell} \ZZ$. 
	Obviously, condition (a) holds. 
	By \eqref{eq:defq}--\eqref{eq:ell_func}, we have
	\begin{align*}
		\frac{|(Q0)(z)|}{\ell(z)} 
		\leq 
			  \frac{|c(z)|}{\ell(z)} 
			  + \beta \int 
			  					\frac{|r(z')|}{\ell(z)}
				   			P(z,dz') 
		\leq 
			  (1 + \beta) / m' < \infty
	\end{align*}
	for all $z \in \ZZ$, so $\| Q0 \|_{\ell} < \infty$. The measurability of  
	$Q \psi$ follows immediately from our primitive assumptions. Hence, condition 
	(b) holds. 	By the Markov property (see, e.g., \cite{meyn2012markov}, section 
	3.4.3), we have
	\begin{align*}
		\int \EE_{z'} |r(Z_t)| P(z, \diff z')
			= \EE_z |r(Z_{t+1})|
				\; \mbox{ and } 
		\int \EE_{z'} |c(Z_t)| P(z, \diff z')
			= \EE_z |c(Z_{t+1})|.
	\end{align*}
	Let $h(z) := 
					\sum_{t=1}^{n-1} \EE_z |r(Z_{t})| +
					\sum_{t=0}^{n-1} \EE_z |c(Z_{t})|$,
	then we have
	\begin{align}
	\label{eq:bd_h}
		& \int 
				h(z')
			P(z, \diff z')
		= \sum_{t=2}^{n} \EE_z |r(Z_{t})| +
			\sum_{t=1}^{n} \EE_z |c(Z_{t})|.
	\end{align}
	By the assumptions on $m'$ and $d'$, we have $m + 2m' > 1$ and 
	$(d + d') / (m + 2m') \leq d'$.
	Assumption \ref{a:ubdd_drift_gel} and \eqref{eq:bd_h} then imply that
	\begin{align*}
	\int \ell(z') P(z,dz')
	& = 
		m' \left(
					\sum_{t=2}^{n} \EE_z |r(Z_{t})| +
					\sum_{t=1}^{n} \EE_z |c(Z_{t})|
			\right)
			+ \int g(z') P(z,dz') + d'   		\\
	& \leq 
		m' \left(
					\sum_{t=2}^{n-1} \EE_z |r(Z_{t})| +
					\sum_{t=1}^{n-1} \EE_z |c(Z_{t})|
			\right)
			+ (m + 2m') g(z) + d + d'  			\\
	& \leq 
		(m + 2m') 
		\left(
			\frac{m'}{m+ 2m'} h(z) +
			g(z) + \frac{d + d'}{m + 2m'}
		\right)			
	\leq (m + 2m') \ell(z).
	\end{align*}		
	Hence, for all $\psi \in b_{\ell} \ZZ$, $a \in \RR_+$ and $z \in \ZZ$, we have
	\begin{align*}
	Q(\psi+a\ell)(z) 
	& = c(z) + \beta \int \max \left\{ r(z'), \psi(z') + a\ell(z')\right\} P(z,dz')  \\
	& \leq c(z) + \beta \int \max \left\{ r(z'),\psi(z')\right\} P(z,dz') + a \beta \int 
			\ell(z') P(z,dz')  \\
	& \leq Q\psi(z) + a \beta (m + 2m') \ell(z).
	\end{align*}
	So condition (c) holds. Claim 1 is verified.

	Regarding claim 2, substituting $v^* = r \vee \psi^*$ into \eqref{eq:cvf} 
	we get
	\begin{equation*}
		\psi^*(z) 
		= c(z) + 
			\beta \int 
						  \max \{ r(z'), \psi^*(z') \} 
					  P(z, \diff z').
	\end{equation*}
	This implies that $\psi^*$ is a fixed point of $Q$. Moreover, from lemma 
	\ref{lm:bd_vcv} we know that $\psi^* \in b_\ell \ZZ$. Hence, $\psi^*$ must 
	coincide with the unique fixed point of $Q$ under $b_\ell \ZZ$.
	
	Finally, by theorem 1.11 of \cite{peskir2006}, we can show 
	that $\tilde{\tau}:= \inf \{t \geq 0: v^*(Z_t) = r(Z_t)\}$ is an optimal stopping 
	time. Claim 3 then follows from the definition of the optimal policy and the 
	fact that $v^* = r \vee \psi^*$. 
\end{proof}

\subsection{Proof of Section \ref{s:properties_cv} Results.}

\begin{proof}[Proof of proposition~ \ref{pr:cont}]
	Let $b_{\ell} c \ZZ$ be the set of continuous functions in $b_{\ell} \ZZ$. Since 
	$\ell$ is continuous by assumption, $b_{\ell}c \ZZ$ is a closed subset of 
	$b_{\ell} \ZZ$ (see e.g., \cite{boud1990recursive}, section 3). To show the 
	continuity of $\psi^*$, it suffices to verify that 
	$Q (b_{\ell} c \ZZ) \subset b_{\ell} c \ZZ$ (see, e.g., \cite{stokey1989}, 
	corollary 1 of theorem 3.2).
	For all $\psi \in b_{\ell} c \ZZ$, there exists a constant $G \in \RR_+$ such that 
	$|\max \{ r(z), \psi(z) \}| \leq |r(z)| + G \ell(z)$. In particular,
	\begin{equation*}
		z \mapsto |r(z)| + G \ell(z) \pm \max \{ r(z), \psi(z)\}
	\end{equation*}
	are nonnegative and continuous. Let $h(z) := \max \{ r(z), \psi (z) \}$. Based on 
	the generalized Fatou's lemma of \cite{feinberg2014fatou} (theorem 1.1), we can 
	show that for all sequence $(z_m)_{m \geq 0}$ of $\ZZ$ such that 
	$z_m \rightarrow z \in \ZZ$, we have
	\begin{align*}
		\int 
		\left( |r(z')| + G \ell(z') \pm h(z') \right) 
		P(z, \diff z') 
		\leq 	
			  \liminf_{m \rightarrow \infty} 
			  \int 
			  \left( |r(z')| + G \ell(z') \pm h(z') \right) 
			  P(z_m, \diff z').
	\end{align*}	 
	Since assumptions \ref{a:payoff_cont}--\ref{a:l_cont} imply that 
	\begin{align*}
		\lim_{m \rightarrow \infty} 
			\int \left( |r(z')| + G \ell(z') \right)
			 P(z_m, \diff z')
		= \int \left( |r(z')| + G \ell(z') \right)
			 P(z, \diff z'),
	\end{align*}
	we have
	\begin{align*}
		\pm \int 
					h(z')
				P(z, \diff z')
		\leq 
				\liminf_{m \rightarrow \infty}
				\left(				
				\pm \int
							h(z')
						P(z_m, \diff z')
				\right),
	\end{align*}
	where we have used the fact that for given sequences $(a_m)_{m \geq 0}$ and 
	$(b_m)_{m \geq 0}$ of $\RR$ with $\underset{m\rightarrow \infty}{\lim} a_m$ 
	exists, we have: 
	$\underset{m \rightarrow \infty}{\liminf} (a_m + b_m)
	= 
		\underset{m \rightarrow \infty}{\lim} a_m 
		+ \underset{m \rightarrow \infty}{\liminf} \; b_m$. 
	Hence, 
	\begin{align}
	\label{eq:fatou_eq}
		\limsup_{m \rightarrow \infty} 
		\int
			  h(z') 
		P(z_m, \diff z')
		\leq
			\int
				  h(z')
			P(z, \diff z')
		\leq 
			\liminf_{m \rightarrow \infty} 
			\int
				  h(z') 
			P(z_m, \diff z'),
	\end{align}
	i.e., $z \mapsto \int h(z') P(z, \diff z')$ is continuous. Since $c$ is continuous by 
	assumption, $Q \psi \in b_{\ell} c \ZZ$. Hence, 
	$Q (b_{\ell} c \ZZ) \subset b_{\ell} c \ZZ$ and $\psi^*$ is 
	continuous. The continuity of $v^*$ follows from the continuity of $\psi^*$ and 
	$r$ and the fact that $v^*=r \vee \psi^*$.
\end{proof}

Recall $\mu$ and $\mu_i$ defined in the beginning of section \ref{ss:diff}.
The next lemma holds.

\begin{lemma}
\label{lm:diff_gel}
Suppose assumption \ref{a:ubdd_drift_gel} holds, and, 
for $i = 1, ..., m$ and $j = 1, 2$
\begin{enumerate}
	\item $P$ has a density representation $f$ such that
			$D_i f(z'|z)$ exists, $\forall (z, z') \in \interior(\ZZ) \times \ZZ$.
			
	\item For all $z_0 \in \interior(\ZZ)$, there exists $\delta>0$, such that 
			\begin{equation*}
				\int |k_j (z')|
						\underset{z^i \in \bar{B}_{\delta}(z_0^i)}{\sup} 
						\left| D_i f(z'|z) \right| 
				\diff z' 
				< \infty		
				\qquad{(z^{-i} = z_0^{-i})}.
			\end{equation*}
\end{enumerate}
Then: $D_i \mu (z) = \mu_i (z)$ for all $z \in \interior (\ZZ)$ and $i=1,...,m$.
\end{lemma}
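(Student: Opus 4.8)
The plan is to establish the identity by differentiating under the integral sign, with the dominated convergence theorem supplying the justification. Fix $z_0 \in \interior(\ZZ)$ and $i \in \{1,\dots,m\}$, write $e_i$ for the $i$-th coordinate vector, and set $h(z') := \max\{r(z'), \psi^*(z')\}$, so that $\mu(z) = \int h(z') f(z'|z)\diff z'$ and $\mu_i(z) = \int h(z') D_i f(z'|z)\diff z'$. Since Assumption \ref{a:ubdd_drift_gel} holds, theorem \ref{t:bk} (together with lemma \ref{lm:bd_vcv}) gives $\psi^* \in b_\ell \ZZ$, hence $|\psi^*| \le G\ell$ for some $G \in \RR_+$; combined with $k_1 = r$ and $k_2 = \ell$ this yields the pointwise bound $|h(z')| \le |k_1(z')| + G|k_2(z')|$ for all $z' \in \ZZ$.

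First I would set up the difference quotient. Because $z_0 \in \interior(\ZZ)$, choose $\delta > 0$ small enough that both the conclusion of hypothesis (2) holds at $z_0$ and $z_0 + te_i \in \interior(\ZZ)$ for all $|t| < \delta$. For $0 < |t| < \delta$,
\begin{equation*}
	\frac{\mu(z_0 + te_i) - \mu(z_0)}{t}
	= \int h(z') \, \frac{f(z'|z_0 + te_i) - f(z'|z_0)}{t} \diff z'.
\end{equation*}
By hypothesis (1), $s \mapsto f(z'|z_0 + se_i)$ is differentiable on $(-\delta,\delta)$ for each fixed $z'$, so the mean value theorem provides $\theta(z',t)$ strictly between $0$ and $t$ with $\frac{f(z'|z_0+te_i)-f(z'|z_0)}{t} = D_i f(z'|z_0 + \theta(z',t)e_i)$. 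Since $|\theta(z',t)| < \delta$, the integrand is dominated, uniformly in $t \in (-\delta,\delta)$, by
\begin{equation*}
	\bigl(|k_1(z')| + G|k_2(z')|\bigr)\sup_{z^i \in \bar{B}_\delta(z_0^i)} |D_i f(z'|z)| \qquad (z^{-i} = z_0^{-i}),
\end{equation*}
and hypothesis (2), applied with $j = 1$ and $j = 2$, shows this envelope is integrable in $z'$; the same bound shows $\mu_i(z_0)$ is finite.

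Then I would pass to the limit. As $t \to 0$, differentiability of $f$ in the $i$-th coordinate gives pointwise convergence of the integrand to $h(z') D_i f(z'|z_0)$, so by the dominated convergence theorem
\begin{equation*}
	\lim_{t \to 0} \frac{\mu(z_0 + te_i) - \mu(z_0)}{t} = \int h(z') D_i f(z'|z_0)\diff z' = \mu_i(z_0),
\end{equation*}
i.e. $D_i\mu(z_0) = \mu_i(z_0)$. Since $z_0 \in \interior(\ZZ)$ and $i$ were arbitrary, this is the claim. The only delicate point is producing a single integrable envelope for all the difference quotients with $|t|$ small; this is exactly what the mean value theorem together with hypothesis (2) deliver. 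In particular, no measurability statement about the mean-value point $\theta(z',t)$ is needed, because only the pointwise estimate $|D_i f(z'|z_0 + \theta(z',t)e_i)| \le \sup_{z^i \in \bar{B}_\delta(z_0^i)}|D_i f(z'|z)|$ is used, and the envelope itself is measurable in $z'$ by hypothesis (2).
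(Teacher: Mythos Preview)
Your proof is correct and follows essentially the same approach as the paper: bound $|h(z')|$ by $|r(z')| + G\ell(z')$ using $\psi^* \in b_\ell\ZZ$, apply the mean value theorem to the difference quotient to obtain the integrable envelope supplied by hypothesis (2), and conclude via dominated convergence. The only cosmetic difference is that the paper works with sequences $z_n \to z_0$ rather than a continuous parameter $t$, which is immaterial.
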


\begin{proof}[Proof of lemma~ \ref{lm:diff_gel}]
	For all $z_0 \in \interior(\ZZ)$, let $\{ z_n\}$ be an arbitrary sequence of 
	$\interior (\ZZ)$ such that $z_n^i \rightarrow z_0^i$, $z_n^i \neq z_0^i$ and 
	$z_n^{-i} = z_0^{-i}$ for all $n \in \NN$.
	For the $\delta>0$ given by (2), there exists $N\in \NN$ such that 
	$z_n^i \in \bar{B}_{\delta}(z_0^i)$ for all $n\geq N$. Holding $z^{-i} = z_0^{-i}$, 
	by the mean value theorem, there exists 
	$\xi^i (z',z_n,z_0) \in \bar{B}_{\delta}(z_0^i)$ such that
	\begin{equation*}
		|\triangle^i (z',z_n,z_0)| 
		:= \left| 
					\frac{f(z'|z_n) - f(z'|z_0)}{z_n^i-z_0^i}
			\right| 
		= \left| 
					D_i f(z'|z)|_{z^i = \xi^i (z',z_n, z_0)} 
			\right|
		\leq 
			   \underset{z^i \in \bar{B}_{\delta}(z_0^i)}{\sup} 
			   \left| 
						D_i f(z'|z)
			   \right|
	\end{equation*}
	Since in addition $|\psi^*| \leq G \ell$ for some $G \in \RR_+$, we have: for all 
	$n\geq N$, 
	\begin{enumerate}
		\item[(a)] 
					$\left| \max \{ r(z'), \psi^*(z')\} 
							 \triangle^i (z',z_n, z_0) \right| 
					\leq 
					   	   \left( |r(z')|+ G \ell(z') \right) 
						   \underset{z^i \in \bar{B}_{\delta}(z_0^i)}{\sup}
						   										\left| D_i f(z'|z)\right|$,
		
		\item[(b)] 
					$\int \left( |r(z')|+ G \ell(z') \right) 
							\underset{z^i \in \bar{B}_{\delta}(z_0^i)}{\sup} 
																   \left| D_i f(z'|z)\right| 
					  dz' < \infty$, and 
		
		\item[(c)] 
					$\max \{ r(z'), \psi^*(z')\} \triangle^i (z',z_n,z_0) 
					\rightarrow 
								\max \{ r(z'), \psi^*(z')\} D_i f(z'|z_0)$ as $n\rightarrow \infty$,
	\end{enumerate}
	where (b) follows from condition (2). By the dominated convergence theorem, 
	\begin{align*}
		\frac{\mu(z_n)-\mu(z_0)}{z_n^i - z_0^i} 
		&= \int \max \{ r(z'), \psi^*(z')\} \triangle^i (z',z_n,z_0) \diff z'    \\
		&\rightarrow \int \max \{ r(z'), \psi^*(z')\} D_i f(z'|z_0) \diff z' 
		= \mu_i (z_0).
	\end{align*}
	Hence, $D_i \mu(z_0) = \mu_i(z_0)$, as was to be shown.
\end{proof}

\subsection{Proof of Section \ref{s:application} Results}

\begin{proof}[Proof of proposition \ref{pr:js_ls}]
	Assumption \ref{a:ubdd_drift_gel} holds due to bounded payoffs.
	By theorem \ref{t:bk}, claim 1 holds. 
	Let $\XX = [w_l, w_h] \subset \RR_+$. Since $c_0 \in \XX$, 
	$v^*(w, \pi) \in [w_l / (1 - \beta), w_h / (1 - \beta)]$, then
	$c_0 + \beta \int v^* (w', \pi') h_{\pi} (w') \diff w' 
			\in [w_l / (1 - \beta), w_h / (1 - \beta)]$.
	By the intermediate value theorem, assumption \ref{a:opt_pol} holds. 
	By theorem \ref{t:bk} and \eqref{eq:res_rule_pol}, claim 2 holds.
	$P$ satisfies the Feller property by lemma \ref{lm:cont}. Since payoff functions are 
	continuous, the continuity of $\psi^*$ and $v^*$ follows from proposition 
	\ref{pr:cont}
	(or remark \ref{rm:bdd_cont}). The continuity of $\bar{w}$ follows from 
	proposition \ref{pr:pol_cont}. Claim 3 is verified. 
	%
\end{proof}

\begin{proof}[Proof of proposition \ref{pr:unc_traps}]
	The exit payoff satisfies
	\begin{equation}
	\label{eq:bdr_uncert}
		\left| 
				r(f', \mu', \gamma') 
		\right| 
			\leq 1/a + 
					\left( 
							e^{a^2 \gamma_x / 2} / a
					\right) \cdot
					e^{-a \mu' + a^2 \gamma' / 2}
					+ f'.
	\end{equation}
	Using \eqref{eq:pos_uncert}, we can show that
	\begin{equation}
	\label{eq:exp_mugam_uncert}
		\int 
			  e^{-a \mu' + a^2 \gamma' / 2} 
		P(z, \diff z')
		= \int 
				  e^{-a \mu' + a^2 \gamma' / 2} 
				  l(y' | \mu, \gamma)
			\diff y'
		= e^{-a \mu + a^2 \gamma / 2}.
	\end{equation}
	Let $\mu_f$ denote the mean of $\{ f_t\}$. Combine 
	\eqref{eq:bdr_uncert}--\eqref{eq:exp_mugam_uncert}, we have 
	\begin{equation}
	\label{eq:int_r}
		\int 
			  \left| r(f',\mu',\gamma') \right|
		P(z, \diff z')
		\leq 
			  \left(1/a + \mu_f \right) + 
			  \left( 
					e^{a^2 \gamma_x / 2} / a
		  	  \right) \cdot
			  g(\mu, \gamma).
	\end{equation}
	Notice that \eqref{eq:exp_mugam_uncert} is equivalent to 
	\begin{equation}
	\label{eq:int_g}
		\int g(\mu', \gamma') P(z, \diff z')
		= g(\mu, \gamma).
	\end{equation} 
	Hence, assumption \ref{a:ubdd_drift_gel} holds with $n :=1$, $m :=1$ and $d:=0$. 
	The intermediate value theorem shows that assumption \ref{a:opt_pol} holds. By theorem \ref{t:bk} and the analysis of section \ref{s:opt_pol}, claims 1--2 hold.
	
	For all bounded continuous function $\tilde{f}: \ZZ \rightarrow \RR$, we have
	\begin{equation*}
		\int \tilde{f}(z') P(z, \diff z') 
		= \int \tilde{f} (f',\mu', \gamma') 		   
				   h(f') l(y' | \mu, \gamma)
			\diff (f', y').
	\end{equation*}
	By \eqref{eq:pos_uncert} and lemma \ref{lm:cont}, this function is bounded and continuous in $(\mu, \gamma)$. Hence,  
	assumption \ref{a:feller} holds.
	The exit payoff $r$ is continuous. By \eqref{eq:pos_uncert}, both sides of \eqref{eq:bdr_uncert} are continuous in $(\mu, \gamma)$.
	By \eqref{eq:exp_mugam_uncert}--\eqref{eq:int_r}, the conditional expectation of the right side of \eqref{eq:bdr_uncert} is continuous in 
	$(\mu, \gamma)$. Lemma \ref{lm:cont} then implies that 
	$(\mu, \gamma) \mapsto \EE_{\mu, \gamma} |r(Z_1)|$ is 
	continuous. Now we have shown that assumption \ref{a:payoff_cont} holds. 
	Assumption \ref{a:l_cont} holds since $g$ is continuous and \eqref{eq:int_g} holds.
	Proposition \ref{pr:cont} then implies that $\psi^*$ and $v^*$ are continuous. 
	By proposition \ref{pr:pol_cont}, $\bar{f}$ is continuous. Claim 3 is verified.
	
	Since $r$ is decreasing in $f$, $v^* = r \vee \psi^*$ is decreasing in $f$.
	If $\rho \geq 0$, then $l$ is stochastically increasing in $\mu$. 
	By \eqref{eq:pos_uncert},  $P(r \vee \psi)$ is increasing in 
	$\mu$ for all $\psi \in b_{\ell} \YY$ that is increasing in $\mu$, i.e., assumption
	\ref{a:mono_map} holds. Since $r$ is increasing in $\mu$, by proposition 
	\ref{pr:mono}, $\psi^*$ and $v^*$ are increasing in $\mu$. Hence, claim 4 
	holds. 
\end{proof}

\begin{proof}[Proof of proposition \ref{pr:js_exog}]
	\textit{Proof of claim 1.} 
	Since
	\begin{equation}
	\label{eq:w'_bd}
		w'^{1 - \delta} 
			= \left( \eta' + \theta' \xi' \right)^{1 - \delta}
			\leq 2 \left( 
							\eta'^{1 - \delta} +
							\theta'^{1 - \delta} \xi'^{1 - \delta}
					   \right),
	\end{equation}
	we have
	\begin{align}
	\label{eq:ew'}
		\int w'^{1 - \delta} P(z, \diff z')
		&\leq 2 \int 
						\eta'^{1 - \delta} v(\eta') 
				  	 \diff \eta' 
			  	 + 2 \int
			   			    \xi'^{1 - \delta} h(\xi') 
			   		   \diff \xi'	\cdot
					   \int
				  		    \theta'^{1 - \delta} f(\theta' | \theta)
				  	   \diff \theta'			\nonumber	\\
		&= 2 e^{ (1 - \delta) \mu_{\eta} + 
						(1 - \delta)^2 \gamma_{\eta} / 2 } 
				 + 
			  2 e^{ (1 - \delta)^2 (\gamma_{\xi} + \gamma_u) / 2 }
			  		\cdot
			  \theta^{(1 - \delta) \rho}.
	\end{align}
	Induction shows that 
	\begin{align}
	\label{eq:ew'_ttimes}
		\int 
			  w'^{1 - \delta} 
		P^t (z, \diff z') 
		\leq 
			a_1^{(t)} + a_2^{(t)} \theta^{(1 - \delta) \rho^t}
		\leq 
			a_1^{(t)} + a_2^{(t)} \left(
												\theta^{(1 - \delta) \rho^t}+
												\theta^{-(1 - \delta) \rho^t}
											\right)
	\end{align}
	for some $a_1^{(t)}$, $a_2^{(t)} > 0$ and all $t \in \NN$. Define
	$g$ as in the assumption, then
	\begin{align}
	\label{eq:eg_js_exog}
		\int 
			  g(\theta') 
			  f(\theta' | \theta) 
		\diff \theta'
		&= \left(
			 e^{
			 	  (1 - \delta) \rho^{n+1} \ln \theta	 	  
			 	 }   +  
			 e^{
			 	  -(1 - \delta) \rho^{n+1} \ln \theta
			 	 }
			 \right) 
			 e^{ (1 - \delta)^2 \rho^{2n} \gamma_u / 2 }		\\
		& \leq 	 
			 \left(
			 	e^{(1 - \delta) \rho^n \ln \theta}   +  
				e^{ -(1 - \delta) \rho^n \ln \theta} + 1
			 \right)
			 e^{ (1 - \delta)^2 \rho^{2n} \gamma_u / 2 }	\nonumber		\\
		& \leq 
			 \left( g(\theta) + 1 \right) e^{\rho^{2n} \sigma}
		= m g(\theta) + d. 		\nonumber
	\end{align}
	Hence, assumption \ref{a:ubdd_drift_gel} holds. Claim 1 then follows 
	from theorem \ref{t:bk}.
	
	\textit{Proof of claim 2.} 
	Assumption \ref{a:opt_pol} holds by the intermediate value theorem. Claim 2 
	then follows from theorem \ref{t:bk}, assumption \ref{a:opt_pol} and 
	\eqref{eq:res_rule_pol}.
	
	\textit{Proof of claim 3.} Note that the stochastic kernel $P$ has a density
	representation in the sense that for all $z \in \ZZ$ and $B \in \zZ$,
	\begin{equation*}
		P(z, B) 
			= \int \1 \left\{ 
			  					(\eta' + \xi' \theta', \theta') \in B 
			  			   \right\}
			  			v (\eta') h(\xi') f(\theta'|\theta)
			  	\diff (\eta', \xi', \theta').
	\end{equation*}
	%
	Moreover, it is straightforward (though tedious) to show that 
	$\theta \mapsto f(\theta' | \theta)$ is twice differentiable for all $\theta'$, 
	that
	$(\theta, \theta') 
		\mapsto \partial f(\theta'|\theta) / \partial \theta$
	is continuous, and that 
	\begin{equation}
		\partial^2 f(\theta'|\theta) / \partial \theta^2 = 0
			\quad \mbox{if and only if} \quad
		\theta = \theta^*(\theta') = \tilde{a}_i \; e^{\ln \theta' / \rho}, i = 1, 2
	\end{equation}
	where 
	$\tilde{a}_1, \tilde{a}_2 
		= \exp \left[ 
							\frac{\gamma_u}{\rho}
							\left( -\frac{1}{2 \rho} \pm
									 \sqrt{\frac{1}{4 \rho^2} + \frac{1}{\gamma_u}}
							\right)
				    \right]$.
	If $\rho>0$, then 
	$\theta^*(\theta') \rightarrow \infty$ as $\theta' \rightarrow \infty$
	and $\theta^*(\theta') \rightarrow 0$ as $\theta' \rightarrow 0$. 
	If $\rho < 0$, then
	$\theta^*(\theta') \rightarrow 0$ as $\theta' \rightarrow \infty$
	and $\theta^*(\theta') \rightarrow \infty$ as $\theta' \rightarrow 0$. 
	Hence, assumption \ref{a:2nd_diff} holds. 
	Based on \eqref{eq:w'_bd}--\eqref{eq:eg_js_exog} and lemma \ref{lm:cont}, we
	can show that assumption \ref{a:cont_diff_gel} holds.  
	By proposition \ref{pr:cont_diff_gel}, $\psi^*$ is continuously differentiable. 
	Since assumption \ref{a:opt_pol} 
	holds and $r$ is continuously differentiable, by proposition \ref{pr:pol_diff},  
	$\bar{w}$ is continuously differentiable. $v^*$ is continuous since
	$v^* = r \vee \psi^*$.
	 
	 \textit{Proof of claim 4.} Assumption \ref{a:c_incre} holds since 
	 $c \equiv c_0$. Note that 
	 \begin{equation*}
	 	r(w) 
	 	 = r(\eta + \xi \theta) 
	 	 = (\eta + \xi \theta)^{1 - \delta} / [(1 - \beta) (1 - \delta)]
	 \end{equation*}
	 is increasing in $\theta$, and, when $\rho >0$, $f(\theta' | \theta)$ is 
	 stochastically increasing in $\theta$. Hence, assumption \ref{a:mono_map} 
	 holds.
	 By propositions \ref{pr:mono} and \ref{pr:pol_mon}, $\psi^*$ and $\bar{w}$ 
	 are increasing in $\theta$. Moreover, $r$ is a function of $w$, $\psi^*$ is a 
	 function of $\theta$, both functions are increasing, and $v^* = r \vee \psi^*$.
	 Hence, $v^*$ is increasing in $w$ and $\theta$.
\end{proof}

\begin{proof}[Proof of proposition \ref{pr:js_exog_log}]

	Since $|\ln a| \leq 1 / a + a$ for all $a>0$, we have
	\begin{align*}
		|u(w')| = |\ln w' |
				   = |\ln (\eta' + \theta' \xi')| 
				  \leq 1/ \eta' + \eta' + \theta' \xi' .
	\end{align*}
	Hence,
	\begin{align*}
		\int |u(w')| P(z, \diff z')
		&\leq \int (1 / \eta' + \eta') v(\eta') \diff \eta' +
				  \int \xi' h(\xi') \diff \xi' 
				  	\cdot
			 	  \int \theta' f(\theta'| \theta) \diff \theta'		\\
		&= \left(
			  		e^{-\mu_{\eta} + \gamma_{\eta} / 2} +
			  		e^{\mu_{\eta} + \gamma_{\eta} / 2} 
			  \right) +		
			  e^{ (\gamma_{\xi} + \gamma_u) / 2}	  
			  		\cdot
			  \theta^{\rho}.
	\end{align*}
	Induction shows that
	\begin{equation}
		\int |u(w')| P^t (z, \diff z')
		\leq 
			a_1^{(t)} + a_2^{(t)} \; \theta^{\rho^t}
		\leq 
			a_1^{(t)} + a_2^{(t)} \left( 
													\theta^{\rho^t} + 
													\theta^{- \rho^t} 
											\right)
	\end{equation}
	for some $a_1^{(t)}$, $a_2^{(t)} > 0$ and all $t \in \NN$. Hence, we can define 
	$g$ as in the assumption. Similarly as in the proof 	of proposition \ref{pr:js_exog}, 
	we can show that
	\begin{align}
	\label{eq:eg_js_exog_log}
		\int g(\theta') 
			   f(\theta' | \theta) 
		\diff \theta'
		&= \left(
					 e^{ \rho^{n+1} \ln \theta } +  
				 	 e^{ -\rho^{n+1} \ln \theta }
			 \right) 
			 e^{\rho^{2n} \gamma_u / 2 }		\\
		& \leq 	 
			 \left(
			 		e^{\rho^n \ln \theta}   +  
					e^{ -\rho^n \ln \theta} + 1
			 \right)
			 e^{ \rho^{2n} \gamma_u / 2 }		\nonumber 	\\
		& = 
			 \left(
			 		g(\theta) + 1
			 \right)
			 e^{ \rho^{2n} \gamma_u / 2 }	
		\leq m g(\theta) + d. 		\nonumber	
	\end{align}
	Hence, assumption \ref{a:ubdd_drift_gel} holds. Claim 1 then follows from
	theorem \ref{t:bk}.	
	The remaining proof is similar to proposition \ref{pr:js_exog}.
\end{proof}

\subsection{Proof of Section \ref{s:extension} Results}

\begin{proof}[Proof of theorem \ref{thm:keythm_ext_1}]

Regarding claim 1, similar to the proof of theorem \ref{t:bk}, we can show that
\begin{align*}
	\int \kappa(z') P(z, \diff z')
	\leq (m + 2 m') \kappa (z)
\end{align*}
for all $z \in \ZZ$. We next show that 
$L \colon (b_{\kappa} \ZZ \times b_{\kappa} \ZZ, \rho_{\kappa})
				\rightarrow
				(b_{\kappa} \ZZ \times b_{\kappa} \ZZ, \rho_{\kappa})$.
For all $h := (\psi, r) \in b_{\kappa} \ZZ \times b_{\kappa} \ZZ$, define the 
functions
$p(z) := c(z) + \beta \int \max \{ r(z'), \psi(z')\} P(z, \diff z')$ and 
$q(z) := s(z) + \alpha \beta \int \max \{ r(z'), \psi(z') \} P(z, \diff z')
			  + (1 - \alpha) \beta \int r(z') P(z, \diff z')$.
Then there exists $G \in \RR_+$ such that for all $z \in \ZZ$,
\begin{align*}
	\frac{|p(z)|}{\kappa(z)}
	\leq \frac{|c(z)|}{\kappa(z)}
			+ \frac{\beta G \int \kappa(z') P(z, \diff z')}{\kappa(z)}
	\leq \frac{1}{m'} + \beta (m + 2m') G < \infty
\end{align*}
and 
\begin{align*}
	\frac{|q(z)|}{\kappa(z)} 
	\leq \frac{|s(z)|}{\kappa(z)}
			+ \frac{\beta G \int \kappa(z') P(z, \diff z')}{\kappa(z)}
	\leq \frac{1}{m'} + \beta (m + 2m') G < \infty.
\end{align*}	
This implies that $p \in b_{\kappa} \ZZ$ and $q \in b_{\kappa} \ZZ$. Hence,
$L h \in b_{\kappa} \ZZ \times b_{\kappa} \ZZ$. Next, we show that $L$ is indeed a contraction mapping on $(b_{\kappa} \ZZ \times b_{\kappa} \ZZ, \rho_{\kappa})$. For all fixed $h_1 := (\psi_1, r_1)$ and $h_2 := (\psi_2, r_2)$ in 
$b_{\kappa} \ZZ \times b_{\kappa} \ZZ$, we have 
$\rho_{\kappa}(Lh_1, Lh_2) = I \vee J$, where
\begin{equation*}
    I := \| \beta P (r_1 \vee \psi_1) - \beta P(r_2 \vee \psi_2) \|_{\kappa}
\end{equation*}
and
\begin{equation*}
    J := \| \alpha \beta [P(r_1 \vee \psi_1) - P(r_2 \vee \psi_2)] +  (1 -
        \alpha) \beta (P r_1 - Pr_2) \|_{\kappa}.
\end{equation*}
For all $z \in \ZZ$, we have
\begin{align*}
	& \left| \int 
					( r_1 \vee \psi_1 )(z') 
				  P(z, \diff z')
				 -
				 \int 
				 	( r_2 \vee \psi_2 )(z') 
				  P(z, \diff z')
		\right|				\\
	& \leq \int
					\left| r_1 \vee \psi_1 
							 - r_2 \vee \psi_2
					\right| (z')
				P(z, \diff z')
	\leq \int 
				( |\psi_1 - \psi_2| \vee |r_1 - r_2| ) (z')
			P(z, \diff z')			\\
	& \leq ( \| \psi_1 - \psi_2 \|_{\kappa} 
				 \vee 
				 \| r_1 - r_2 \|_{\kappa} )
			   \int \kappa(z') P(z, \diff z')
	\leq \rho_{\kappa} (h_1, h_2) 
			(m + 2m') \kappa(z),
\end{align*}
where the second inequality is due to the elementary fact 
$|a \vee b - a' \vee b'| \leq |a-a'| \vee |b-b'|$. This implies that 
$I \leq \beta (m+2m') \rho_{\kappa}(h_1, h_2)$.  Regarding $J$, similar arguments yield $J \leq \beta (m + 2m') \rho_{\kappa}(h_1, h_2)$. In conclusion, we have 
\begin{equation}
	\rho_{\kappa}(L h_1, L h_2) 
	= I \vee J
	\leq \beta (m + 2m')
			\rho_{\kappa} (h_1, h_2).
\end{equation}
Hence, $L$ is a contraction mapping on 
$(b_{\kappa} \ZZ \times b_{\kappa} \ZZ, \rho_{\kappa})$ 
with modulus $\beta (m + 2m')$, as was to be shown. Claim 1 is verified.

Since $v^*$ and $r^*$ satisfy \eqref{eq:rst1}--\eqref{eq:rst2}, by \eqref{eq:rst3},  $h^* := (\psi^*, r^*)$ is indeed a fixed point of $L$. To prove that claim 2 holds, it remains to show that $h^* \in b_{\kappa} \ZZ \times b_{\kappa} \ZZ$. Since
\begin{equation*}
	\max \{ |r^*(z)|, | \psi^*(z)| \} 
		\leq 
			  \sum_{t=0}^{\infty} 
			  \beta^t 
			  \EE_z [ |s(Z_t)| + g(Z_t) ],
\end{equation*}
this can be proved in a similar way as lemma \ref{lm:bd_vcv}. Hence, claim 2 is verified.
\end{proof}

\bibliographystyle{ecta}

\bibliography{localbib}

\begin{filecontents}{localbib.bib}

@article{alagoz2004optimal,
  title={The optimal timing of living-donor liver transplantation},
  author={Alagoz, Oguzhan and Maillart, Lisa M and Schaefer, Andrew J and Roberts, Mark S},
  journal={Management Science},
  volume={50},
  number={10},
  pages={1420--1430},
  year={2004},
  publisher={INFORMS}
}

@article{albright1977bayesian,
  title={A Bayesian approach to a generalized house selling problem},
  author={Albright, S Christian},
  journal={Management Science},
  volume={24},
  number={4},
  pages={432--440},
  year={1977},
  publisher={INFORMS}
}

@article{albuquerque2004optimal,
  title={Optimal lending contracts and firm dynamics},
  author={Albuquerque, Rui and Hopenhayn, Hugo A},
  journal={The Review of Economic Studies},
  volume={71},
  number={2},
  pages={285--315},
  year={2004},
  publisher={Oxford University Press}
}

@article{alfaro2009optimal,
  title={Optimal reserve management and sovereign debt},
  author={Alfaro, Laura and Kanczuk, Fabio},
  journal={Journal of International Economics},
  volume={77},
  number={1},
  pages={23--36},
  year={2009},
  publisher={Elsevier}
}

@article{alvarez2014real,
  title={A real options perspective on the future of the Euro},
  author={Alvarez, Fernando and Dixit, Avinash},
  journal={Journal of Monetary Economics},
  volume={61},
  pages={78--109},
  year={2014},
  publisher={Elsevier}
}

@article{alvarez1998dynamic,
  title={Dynamic programming with homogeneous functions},
  author={Alvarez, Fernando and Stokey, Nancy L},
  journal={Journal of Economic Theory},
  volume={82},
  number={1},
  pages={167--189},
  year={1998},
  publisher={Elsevier}
}

@article{angelini2008evolution,
  title={On the evolution of firm size distributions},
  author={Angelini, Paolo and Generale, Andrea},
  journal={The American Economic Review},
  volume={98},
  number={1},
  pages={426--438},
  year={2008},
  publisher={American Economic Association}
}

@article{arellano2012default,
  title={Default and the maturity structure in sovereign bonds},
  author={Arellano, Cristina and Ramanarayanan, Ananth},
  journal={Journal of Political Economy},
  volume={120},
  number={2},
  pages={187--232},
  year={2012},
  publisher={University of Chicago Press Chicago, IL}
}

@article{asplund2006firm,
  title={Firm turnover in imperfectly competitive markets},
  author={Asplund, Marcus and Nocke, Volker},
  journal={The Review of Economic Studies},
  volume={73},
  number={2},
  pages={295--327},
  year={2006},
  publisher={Oxford University Press}
}

@article{backus2014discussion,
  title={Discussion of Alvarez and Dixit: A real options perspective on the Euro},
  author={Backus, David},
  journal={Journal of Monetary Economics},
  volume={61},
  pages={110--113},
  year={2014},
  publisher={Elsevier}
}

@article{bagger2014tenure,
  title={Tenure, experience, human capital, and wages: A tractable equilibrium search model of wage dynamics},
  author={Bagger, Jesper and Fontaine, Fran{\c{c}}ois and Postel-Vinay, Fabien and Robin, Jean-Marc},
  journal={The American Economic Review},
  volume={104},
  number={6},
  pages={1551--1596},
  year={2014},
  publisher={American Economic Association}
}

@article{bai2012financial,
  title={Financial integration and international risk sharing},
  author={Bai, Yan and Zhang, Jing},
  journal={Journal of International Economics},
  volume={86},
  number={1},
  pages={17--32},
  year={2012},
  publisher={Elsevier}
}

@book{becker1997capital,
  title={Capital Theory, Equilibrium Analysis, and Recursive Utility},
  author={Becker, Robert A and Boyd, John Harvey},
  year={1997},
  publisher={Wiley-Blackwell}
}

@article{bental1996accumulation,
  title={The accumulation of wealth and the cyclical generation of new technologies: A search theoretic approach},
  author={Bental, Benjamin and Peled, Dan},
  journal={International Economic Review},
  volume={37},
  number={3},
  pages={687--718},
  year={1996},
  publisher={JSTOR}
}

@article{bental2002quantitative,
  title={Quantitative growth effects of subsidies in a search theoretic R\&D model},
  author={Bental, Benjamin and Peled, Dan},
  journal={Journal of Evolutionary Economics},
  volume={12},
  number={4},
  pages={397--423},
  year={2002},
  publisher={Springer}
}

@article{burdett1997marriage,
  title={Marriage and class},
  author={Burdett, Ken and Coles, Melvyn G},
  journal={The Quarterly Journal of Economics},
  volume={112},
  number={1},
  pages={141--168},
  year={1997},
  publisher={Oxford University Press}
}

@article{burdett1999long,
  title={Long-term partnership formation: Marriage and employment},
  author={Burdett, Kenneth and Coles, Melvyn G},
  journal={The Economic Journal},
  volume={109},
  number={456},
  pages={307--334},
  year={1999},
  publisher={Wiley Online Library}
}

@book{bertsekas1976,
  title={Dynamic Programming and Stochastic Control},
  author={Bertsekas, Dimitri P},
  year={1976},
  publisher={Academic Press}
}

@article{bertsekas2012,
  title={Weighted sup-norm contractions in dynamic programming: A review and some new applications},
  author={Bertsekas, Dimitri P},
  journal={Dept. Elect. Eng. Comput. Sci., Massachusetts Inst. Technol., Cambridge, MA, USA, Tech. Rep. LIDS-P-2884},
  year={2012}
}

@article{boud1990recursive,
  title={Recursive utility and the Ramsey problem},
  author={Boyd, John H},
  journal={Journal of Economic Theory},
  volume={50},
  number={2},
  pages={326--345},
  year={1990},
  publisher={Elsevier}
}

@article{bruze2014dynamics,
  title={The dynamics of marriage and divorce},
  author={Bruze, Gustaf and Svarer, Michael and Weiss, Yoram},
  journal={Journal of Labor Economics},
  volume={33},
  number={1},
  pages={123--170},
  year={2014},
  publisher={University of Chicago Press Chicago, IL}
}

@article{bull1988mismatch,
  title={Mismatch versus derived-demand shift as causes of labour mobility},
  author={Bull, Clive and Jovanovic, Boyan},
  journal={The Review of Economic Studies},
  volume={55},
  number={1},
  pages={169--175},
  year={1988},
  publisher={Oxford University Press}
}

@article{burdett1988declining,
  title={Declining reservation wages and learning},
  author={Burdett, Kenneth and Vishwanath, Tara},
  journal={The Review of Economic Studies},
  volume={55},
  number={4},
  pages={655--665},
  year={1988},
  publisher={Oxford University Press}
}

@article{cabral2003evolution,
  title={On the evolution of the firm size distribution: Facts and theory},
  author={Cabral, Luis and Mata, Jose},
  journal={The American Economic Review},
  volume={93},
  number={4},
  pages={1075--1090},
  year={2003},
  publisher={American Economic Association}
}

@article{chalkley1984adaptive,
  title={Adaptive job search and null offers: A model of quantity constrained search},
  author={Chalkley, Martin},
  journal={The Economic Journal},
  volume={94},
  pages={148--157},
  year={1984},
  publisher={JSTOR}
}

@article{chatterjee2012spinoffs,
  title={Spinoffs and the Market for Ideas},
  author={Chatterjee, Satyajit and Rossi-Hansberg, Esteban},
  journal={International Economic Review},
  volume={53},
  number={1},
  pages={53--93},
  year={2012},
  publisher={Wiley Online Library}
}

@article{chetty2007interest,
  title={Interest rates, irreversibility, and backward-bending investment},
  author={Chetty, Raj},
  journal={The Review of Economic Studies},
  volume={74},
  number={1},
  pages={67--91},
  year={2007},
  publisher={Oxford University Press}
}

@article{cocsar2016firm,
  title={Firm dynamics, job turnover, and wage distributions in an open economy},
  author={Co{\c{s}}ar, A Kerem and Guner, Nezih and Tybout, James},
  journal={The American Economic Review},
  volume={106},
  number={3},
  pages={625--663},
  year={2016},
  publisher={American Economic Association}
}

@article{cogley2005drifts,
  title={Drifts and volatilities: monetary policies and outcomes in the post WWII US},
  author={Cogley, Timothy and Sargent, Thomas J},
  journal={Review of Economic dynamics},
  volume={8},
  number={2},
  pages={262--302},
  year={2005},
  publisher={Elsevier}
}

@article{coles2011emergence,
  title={On the emergence of toyboys: The timing of marriage with aging and uncertain careers},
  author={Coles, Melvyn G and Francesconi, Marco},
  journal={International Economic Review},
  volume={52},
  number={3},
  pages={825--853},
  year={2011},
  publisher={Wiley Online Library}
}

@article{cooper2007search,
  title={Search frictions: Matching aggregate and establishment observations},
  author={Cooper, Russell and Haltiwanger, John and Willis, Jonathan L},
  journal={Journal of Monetary Economics},
  volume={54},
  pages={56--78},
  year={2007},
  publisher={Elsevier}
}

@article{crawford2005uncertainty,
  title={Uncertainty and learning in pharmaceutical demand},
  author={Crawford, Gregory S and Shum, Matthew},
  journal={Econometrica},
  volume={73},
  number={4},
  pages={1137--1173},
  year={2005},
  publisher={Wiley Online Library}
}

@book{degroot2005,
  title={Optimal Statistical Decisions},
  author={DeGroot, Morris H},
  volume={82},
  year={2005},
  publisher={John Wiley \& Sons}
}

@article{dinlersoz2012information,
  title={Information and industry dynamics},
  author={Dinlersoz, Emin M and Yorukoglu, Mehmet},
  journal={The American Economic Review},
  volume={102},
  number={2},
  pages={884--913},
  year={2012},
  publisher={American Economic Association}
}

@book{dixit1994investment,
  title={Investment Under Uncertainty},
  author={Dixit, Avinash K and Pindyck, Robert S},
  year={1994},
  publisher={Princeton University Press}
}

@book{duffie2010dynamic,
  title={Dynamic Asset Pricing Theory},
  author={Duffie, Darrell},
  year={2010},
  publisher={Princeton University Press}
}

@article{dunne2013entry,
  title={Entry, exit, and the determinants of market structure},
  author={Dunne, Timothy and Klimek, Shawn D and Roberts, Mark J and Xu, Daniel Yi},
  journal={The RAND Journal of Economics},
  volume={44},
  number={3},
  pages={462--487},
  year={2013},
  publisher={Wiley Online Library}
}

@article{duran2000dynamic,
  title={On dynamic programming with unbounded returns},
  author={Dur{\'a}n, Jorge},
  journal={Economic Theory},
  volume={15},
  number={2},
  pages={339--352},
  year={2000},
  publisher={Springer}
}

@article{duran2003discounting,
  title={Discounting long run average growth in stochastic dynamic programs},
  author={Dur{\'a}n, Jorge},
  journal={Economic Theory},
  volume={22},
  number={2},
  pages={395--413},
  year={2003},
  publisher={Springer}
}

@article{pakes1998empirical,
  title={Empirical implications of alternative models of firm dynamics},
  author={Pakes, Ariel and Ericson, Richard},
  journal={Journal of Economic Theory},
  volume={79},
  number={1},
  pages={1--45},
  year={1998},
  publisher={Elsevier}
}

@techreport{fajgelbaum2015uncertainty,
  title={Uncertainty traps},
  author={Fajgelbaum, Pablo and Schaal, Edouard and Taschereau-Dumouchel, Mathieu},
  year={2015},
  institution={NBER Working Paper}
}

@article{feinberg2012average,
  title={Average cost Markov decision processes with weakly continuous transition probabilities},
  author={Feinberg, Eugene A and Kasyanov, Pavlo O and Zadoianchuk, Nina V},
  journal={Mathematics of Operations Research},
  volume={37},
  number={4},
  pages={591--607},
  year={2012},
  publisher={INFORMS}
}

@article{feinberg2014fatou,
  title={Fatou's lemma for weakly converging probabilities},
  author={Feinberg, Eugene A and Kasyanov, Pavlo O and Zadoianchuk, Nina V},
  journal={Theory of Probability \& Its Applications},
  volume={58},
  number={4},
  pages={683--689},
  year={2014},
  publisher={SIAM}
}

@article{gomes2001equilibrium,
  title={Equilibrium unemployment},
  author={Gomes, Joao and Greenwood, Jeremy and Rebelo, Sergio},
  journal={Journal of Monetary Economics},
  volume={48},
  number={1},
  pages={109--152},
  year={2001},
  publisher={Elsevier}
}

@article{rocheteau2005money,
  title={Money in search equilibrium, in competitive equilibrium, and in competitive search equilibrium},
  author={Rocheteau, Guillaume and Wright, Randall},
  journal={Econometrica},
  volume={73},
  number={1},
  pages={175--202},
  year={2005},
  publisher={Wiley Online Library}
}

@article{hatchondo2016debt,
  title={Debt dilution and sovereign default risk},
  author={Hatchondo, Juan Carlos and Martinez, Leonardo and Sosa-Padilla, Cesar},
  journal={Journal of Political Economy},
  volume={124},
  number={5},
  pages={1383--1422},
  year={2016},
  publisher={University of Chicago Press Chicago, IL}
}

@article{howard2002transplant,
  title={Why do transplant surgeons turn down organs?: A model of the accept/reject decision},
  author={Howard, David H},
  journal={Journal of Health Economics},
  volume={21},
  number={6},
  pages={957--969},
  year={2002},
  publisher={Elsevier}
}

@article{insley2010contrasting,
  title={Contrasting two approaches in real options valuation: contingent claims versus dynamic programming},
  author={Insley, Margaret C and Wirjanto, Tony S},
  journal={Journal of Forest Economics},
  volume={16},
  number={2},
  pages={157--176},
  year={2010},
  publisher={Elsevier}
}

@article{jovanovic1982selection,
  title={Selection and the evolution of industry},
  author={Jovanovic, Boyan},
  journal={Econometrica},
  pages={649--670},
  year={1982},
  publisher={JSTOR}
}

@article{jovanovic1987work,
  title={Work, rest, and search: unemployment, turnover, and the cycle},
  author={Jovanovic, Boyan},
  journal={Journal of Labor Economics},
  pages={131--148},
  year={1987},
  publisher={JSTOR}
}

@article{jovanovic1989growth,
  title={The growth and diffusion of knowledge},
  author={Jovanovic, Boyan and Rob, Rafael},
  journal={The Review of Economic Studies},
  volume={56},
  number={4},
  pages={569--582},
  year={1989},
  publisher={Oxford University Press}
}

@article{kambourov2009occupational,
  title={Occupational mobility and wage inequality},
  author={Kambourov, Gueorgui and Manovskii, Iourii},
  journal={The Review of Economic Studies},
  volume={76},
  number={2},
  pages={731--759},
  year={2009},
  publisher={Oxford University Press}
}

@article{kaplan2010much,
  title={How much consumption insurance beyond self-insurance?},
  author={Kaplan, Greg and Violante, Giovanni L},
  journal={American Economic Journal: Macroeconomics},
  volume={2},
  number={4},
  pages={53--87},
  year={2010},
  publisher={American Economic Association}
}

@book{karatzas1998methods,
  title={Methods of Mathematical Finance},
  author={Karatzas, Ioannis and Shreve, Steven E},
  volume={39},
  year={1998},
  publisher={Springer Science \& Business Media}
}

@article{kellogg2014effect,
  title={The effect of uncertainty on investment: evidence from Texas oil drilling},
  author={Kellogg, Ryan},
  journal={The American Economic Review},
  volume={104},
  number={6},
  pages={1698--1734},
  year={2014},
  publisher={American Economic Association}
}

@article{kiyotaki1989money,
  title={On money as a medium of exchange},
  author={Kiyotaki, Nobuhiro and Wright, Randall},
  journal={The Journal of Political Economy},
  pages={927--954},
  year={1989},
  publisher={JSTOR}
}

@article{kiyotaki1991contribution,
  title={A contribution to the pure theory of money},
  author={Kiyotaki, Nobuhiro and Wright, Randall},
  journal={Journal of Economic Theory},
  volume={53},
  number={2},
  pages={215--235},
  year={1991},
  publisher={Elsevier}
}

@article{kiyotaki1993search,
  title={A search-theoretic approach to monetary economics},
  author={Kiyotaki, Nobuhiro and Wright, Randall},
  journal={The American Economic Review},
  pages={63--77},
  year={1993},
  publisher={JSTOR}
}

@article{le2005recursive,
  title={Recursive utility and optimal growth with bounded or unbounded returns},
  author={Le Van, Cuong and Vailakis, Yiannis},
  journal={Journal of Economic Theory},
  volume={123},
  number={2},
  pages={187--209},
  year={2005},
  publisher={Elsevier}
}

@article{li2014solving,
  title={Solving the income fluctuation problem with unbounded rewards},
  author={Li, Huiyu and Stachurski, John},
  journal={Journal of Economic Dynamics and Control},
  volume={45},
  pages={353--365},
  year={2014},
  publisher={Elsevier}
}

@article{lise2012job,
  title={On-the-job search and precautionary savings},
  author={Lise, Jeremy},
  journal={The Review of Economic Studies},
  volume={80},
  pages={1086--1113},
  year={2013},
  publisher={Oxford University Press}
}

@book{ljungqvist2012recursive,
  title={Recursive Macroeconomic Theory},
  author={Ljungqvist, Lars and Sargent, Thomas J},
  year={2012},
  publisher={MIT Press}
}

@article{ljungqvist2008two,
  title={Two questions about European unemployment},
  author={Ljungqvist, Lars and Sargent, Thomas J},
  journal={Econometrica},
  volume={76},
  number={1},
  pages={1--29},
  year={2008},
  publisher={Wiley Online Library}
}

@article{low2010wage,
  title={Wage risk and employment risk over the life cycle},
  author={Low, Hamish and Meghir, Costas and Pistaferri, Luigi},
  journal={The American Economic Review},
  volume={100},
  number={4},
  pages={1432--1467},
  year={2010},
  publisher={American Economic Association}
}

@article{lucas1974equilibrium,
  title={Equilibrium search and unemployment},
  author={Lucas, Robert E and Prescott, Edward C},
  journal={Journal of Economic Theory},
  volume={7},
  number={2},
  pages={188--209},
  year={1974},
  publisher={Academic Press}
}

@article{luttmer2007selection,
  title={Selection, growth, and the size distribution of firms},
  author={Luttmer, Erzo GJ},
  journal={The Quarterly Journal of Economics},
  volume={122},
  number={3},
  pages={1103--1144},
  year={2007},
  publisher={Oxford University Press}
}

@article{menzio2015equilibrium,
  title={Equilibrium price dispersion with sequential search},
  author={Menzio, Guido and Trachter, Nicholas},
  journal={Journal of Economic Theory},
  volume={160},
  pages={188--215},
  year={2015},
  publisher={Elsevier}
}

@article{michael1956continuous,
  title={Continuous selections. I},
  author={Michael, Ernest},
  journal={Annals of Mathematics},
  pages={361--382},
  year={1956},
  publisher={JSTOR}
}

@article{marinacci2010unique,
  title={Unique solutions for stochastic recursive utilities},
  author={Marinacci, Massimo and Montrucchio, Luigi},
  journal={Journal of Economic Theory},
  volume={145},
  number={5},
  pages={1776--1804},
  year={2010},
  publisher={Elsevier}
}

@article{mendoza2012general,
  title={A general equilibrium model of sovereign default and business cycles},
  author={Mendoza, Enrique G and Yue, Vivian Z},
  journal={The Quarterly Journal of Economics},
  volume={127},
  pages={889--946},
  year={2012},
  publisher={Oxford University Press}
}

@article{mitchell2000scope,
  title={The scope and organization of production: firm dynamics over the learning curve},
  author={Mitchell, Matthew F},
  journal={The Rand Journal of Economics},
  pages={180--205},
  year={2000},
  publisher={JSTOR}
}

@article{poschke2010regulation,
  title={The regulation of entry and aggregate productivity},
  author={Poschke, Markus},
  journal={The Economic Journal},
  volume={120},
  number={549},
  pages={1175--1200},
  year={2010},
  publisher={Wiley Online Library}
}

@article{martins2010existence,
  title={Existence and uniqueness of a fixed point for local contractions},
  author={Martins-da-Rocha, V Filipe and Vailakis, Yiannis},
  journal={Econometrica},
  volume={78},
  number={3},
  pages={1127--1141},
  year={2010},
  publisher={Wiley Online Library}
}

@article{matkowski2011discounted,
  title={On discounted dynamic programming with unbounded returns},
  author={Matkowski, Janusz and Nowak, Andrzej S},
  journal={Economic Theory},
  volume={46},
  number={3},
  pages={455--474},
  year={2011},
  publisher={Springer}
}

@article{mccall1970,
  title={Economics of information and job search},
  author={McCall, John Joseph},
  journal={The Quarterly Journal of Economics},
  pages={113--126},
  year={1970},
  volume={84},
  number={1},
  publisher={JSTOR}
}

@article{mcdonald1982value,
  title={The value of waiting to invest},
  author={McDonald, Robert L and Siegel, Daniel},
  year={1986},
  journal={The Quarterly Journal of Economics},
  volume={101},
  issue={4},
  pages={707--727}
}

@book{meyn2012markov,
  title={Markov Chains and Stochastic Stability},
  author={Meyn, Sean P and Tweedie, Richard L},
  year={2012},
  publisher={Springer Science \& Business Media}
}

@article{moscarini2013stochastic,
  title={Stochastic search equilibrium},
  author={Moscarini, Giuseppe and Postel-Vinay, Fabien},
  journal={The Review of Economic Studies},
  volume={80},
  pages={1545--1581},
  year={2013},
  publisher={Oxford University Press}
}

@article{nagypal2007learning,
  title={Learning by doing vs. learning about match quality: Can we tell them apart?},
  author={Nagyp{\'a}l, {\'E}va},
  journal={The Review of Economic Studies},
  volume={74},
  number={2},
  pages={537--566},
  year={2007},
  publisher={Oxford University Press}
}

@article{perla2014equilibrium,
  title={Equilibrium imitation and growth},
  author={Perla, Jesse and Tonetti, Christopher},
  journal={Journal of Political Economy},
  volume={122},
  number={1},
  pages={52--76},
  year={2014},
  publisher={JSTOR}
}

@book{peskir2006,
  title={Optimal Stopping and Free-boundary Problems},
  author={Peskir, Goran and Shiryaev, Albert},
  year={2006},
  publisher={Springer}
}

@book{porteus2002foundations,
  title={Foundations of Stochastic Inventory Theory},
  author={Porteus, Evan L},
  year={2002},
  publisher={Stanford University Press}
}

@article{pries2005hiring,
  title={Hiring policies, labor market institutions, and labor market flows},
  author={Pries, Michael and Rogerson, Richard},
  journal={Journal of Political Economy},
  volume={113},
  number={4},
  pages={811--839},
  year={2005},
  publisher={The University of Chicago Press}
}

@article{primiceri2005time,
  title={Time varying structural vector autoregressions and monetary policy},
  author={Primiceri, Giorgio E},
  journal={The Review of Economic Studies},
  volume={72},
  number={3},
  pages={821--852},
  year={2005},
  publisher={Oxford University Press}
}

@article{rendon2006job,
  title={Job search and asset accumulation under borrowing constraints},
  author={Rendon, Silvio},
  journal={International Economic Review},
  volume={47},
  number={1},
  pages={233--263},
  year={2006},
  publisher={Wiley Online Library}
}

@article{rincon2003existence,
  title={Existence and uniqueness of solutions to the Bellman equation in the unbounded case},
  author={Rinc{\'o}n-Zapatero, Juan Pablo and Rodr{\'\i}guez-Palmero, Carlos},
  journal={Econometrica},
  volume={71},
  number={5},
  pages={1519--1555},
  year={2003},
  publisher={Wiley Online Library}
}

@article{rincon2009corrigendum,
  title={Corrigendum to “Existence and uniqueness of solutions to the Bellman equation in the unbounded case” Econometrica, Vol. 71, No. 5 (September, 2003), 1519--1555},
  author={Rinc{\'o}n-Zapatero, Juan Pablo and Rodr{\'\i}guez-Palmero, Carlos},
  journal={Econometrica},
  volume={77},
  number={1},
  pages={317--318},
  year={2009},
  publisher={Wiley Online Library}
}

@article{robin2011dynamics,
  title={On the dynamics of unemployment and wage distributions},
  author={Robin, Jean-Marc},
  journal={Econometrica},
  volume={79},
  number={5},
  pages={1327--1355},
  year={2011},
  publisher={Wiley Online Library}
}

@article{rogerson2005search,
  title={Search-theoretic models of the labor market: A survey},
  author={Rogerson, Richard and Shimer, Robert and Wright, Randall},
  journal={Journal of Economic Literature},
  volume={43},
  number={4},
  pages={959--988},
  year={2005},
  publisher={American Economic Association}
}

@article{rosenfield1981optimal,
  title={Optimal adaptive price search},
  author={Rosenfield, Donald B and Shapiro, Roy D},
  journal={Journal of Economic Theory},
  volume={25},
  number={1},
  pages={1--20},
  year={1981},
  publisher={Elsevier}
}

@article{rothschild1974searching,
  title={Searching for the lowest price when the distribution of prices is unknown},
  author={Rothschild, Michael},
  journal={Journal of Political Economy},
  pages={689--711},
  year={1974},
  volume={82},
  number={4},
  publisher={JSTOR}
}

@article{santos2016not,
  title={“Why Not Settle Down Already?” A Quantitative Analysis of the Delay in Marriage},
  author={Santos, Cezar and Weiss, David},
  journal={International Economic Review},
  volume={57},
  number={2},
  pages={425--452},
  year={2016},
  publisher={Wiley Online Library}
}

@article{seierstad1992reservation,
  title={Reservation prices in optimal stopping},
  author={Seierstad, Atle},
  journal={Operations Research},
  volume={40},
  number={2},
  pages={409--415},
  year={1992},
  publisher={INFORMS}
}

@article{shi1995money,
  title={Money and prices: a model of search and bargaining},
  author={Shi, Shouyong},
  journal={Journal of Economic Theory},
  volume={67},
  number={2},
  pages={467--496},
  year={1995},
  publisher={Elsevier}
}

@article{shi1997divisible,
  title={A divisible search model of fiat money},
  author={Shi, Shouyong},
  journal={Econometrica},
  pages={75--102},
  year={1997},
  publisher={JSTOR}
}

@book{shiryaev1999essentials,
  title={Essentials of Stochastic Finance: Facts, Models, Theory},
  author={Shiryaev, Albert N},
  volume={3},
  year={1999},
  publisher={World scientific}
}

@book{shiryaev2007optimal,
  title={Optimal Stopping Rules},
  author={Shiryaev, Albert N},
  volume={8},
  year={2007},
  publisher={Springer Science \& Business Media}
}

@book{stachurski2009economic,
  title={Economic Dynamics: Theory and Computation},
  author={Stachurski, John},
  year={2009},
  publisher={MIT Press}
}

@book{stokey1989,
  title={Recursive Methods in Economic Dynamics},
  author={Stokey, Nancy and Lucas, Robert and Prescott, Edward},
  year={1989},
  publisher={Harvard University Press}
}

@book{schwartz2004real,
  title={Real options and investment under uncertainty: classical readings and recent contributions},
  author={Schwartz, Eduardo S and Trigeorgis, Lenos},
  year={2004},
  publisher={MIT press}
}

@article{taylor1982financial,
  title={Financial returns modelled by the product of two stochastic processes--a study of the daily sugar prices 1961-75},
  author={Taylor, Stephen John},
  journal={Time Series Analysis: Theory and Practice},
  volume={1},
  pages={203--226},
  year={1982},
  publisher={North-Holland}
}

@article{timoshenko2015product,
  title={Product switching in a model of learning},
  author={Timoshenko, Olga A},
  journal={Journal of International Economics},
  volume={95},
  number={2},
  pages={233--249},
  year={2015},
  publisher={Elsevier}
}

@article{trejos1995search,
  title={Search, bargaining, money, and prices},
  author={Trejos, Alberto and Wright, Randall},
  journal={Journal of Political Economy},
  pages={118--141},
  year={1995},
  publisher={JSTOR}
}

@article{vereshchagina2009risk,
  title={Risk taking by entrepreneurs},
  author={Vereshchagina, Galina and Hopenhayn, Hugo A},
  journal={The American Economic Review},
  volume={99},
  number={5},
  pages={1808--1830},
  year={2009},
  publisher={American Economic Association}
}

@article{rogerson2005search,
  title={Search-theoretic models of the labor market: A survey},
  author={Rogerson, Richard and Shimer, Robert and Wright, Randall},
  journal={Journal of Economic Literature},
  volume={43},
  number={4},
  pages={959--988},
  year={2005},
  publisher={American Economic Association}
}

@article{rust1986optimal,
  title={When is it optimal to kill off the market for used durable goods?},
  author={Rust, John},
  journal={Econometrica},
  pages={65--86},
  year={1986},
  publisher={JSTOR}
}

@article{chatterjee2012maturity,
  title={Maturity, indebtedness, and default risk},
  author={Chatterjee, Satyajit and Eyigungor, Burcu},
  journal={The American Economic Review},
  volume={102},
  number={6},
  pages={2674--2699},
  year={2012},
  publisher={American Economic Association}
}

@article{choi2003optimal,
  title={Optimal defaults},
  author={Choi, James J and Laibson, David and Madrian, Brigitte C and Metrick, Andrew},
  journal={The American Economic Review},
  volume={93},
  number={2},
  pages={180--185},
  year={2003},
  publisher={JSTOR}
}

@article{arellano2008default,
  title={Default risk and income fluctuations in emerging economies},
  author={Arellano, Cristina},
  journal={The American Economic Review},
  volume={98},
  number={3},
  pages={690--712},
  year={2008},
  publisher={American Economic Association}
}

@article{burdett1983equilibrium,
  title={Equilibrium price dispersion},
  author={Burdett, Kenneth and Judd, Kenneth L},
  journal={Econometrica},
  pages={955--969},
  year={1983},
  publisher={JSTOR}
}

@article{rust1987optimal,
  title={Optimal replacement of GMC bus engines: An empirical model of Harold Zurcher},
  author={Rust, John},
  journal={Econometrica},
  pages={999--1033},
  year={1987},
  publisher={JSTOR}
}

@article{huggett2011sources,
  title={Sources of lifetime inequality},
  author={Huggett, Mark and Ventura, Gustavo and Yaron, Amir},
  journal={The American Economic Review},
  volume={101},
  number={7},
  pages={2923--2954},
  year={2011},
  publisher={American Economic Association}
}

@book{pissarides2000equilibrium,
  title={Equilibrium Unemployment Theory},
  author={Pissarides, Christopher A},
  year={2000},
  publisher={MIT press}
}

@article{rust1997using,
  title={Using randomization to break the curse of dimensionality},
  author={Rust, John},
  journal={Econometrica},
  pages={487--516},
  year={1997},
  publisher={JSTOR}
}

@article{bellman1969new,
  title={A new type of approximation leading to reduction of dimensionality in control processes},
  author={Bellman, Richard},
  journal={Journal of Mathematical Analysis and Applications},
  volume={27},
  number={2},
  pages={454--459},
  year={1969},
  publisher={Elsevier}
}

@article{albuquerque2004optimal,
  title={Optimal lending contracts and firm dynamics},
  author={Albuquerque, Rui and Hopenhayn, Hugo A},
  journal={The Review of Economic Studies},
  volume={71},
  number={2},
  pages={285--315},
  year={2004},
  publisher={Oxford University Press}
}

@article{hopenhayn1992entry,
  title={Entry, exit, and firm dynamics in long run equilibrium},
  author={Hopenhayn, Hugo A},
  journal={Econometrica},
  pages={1127--1150},
  year={1992},
  publisher={JSTOR}
}

@article{ericson1995markov,
  title={Markov-perfect industry dynamics: A framework for empirical work},
  author={Ericson, Richard and Pakes, Ariel},
  journal={The Review of Economic Studies},
  volume={62},
  number={1},
  pages={53--82},
  year={1995},
  publisher={Oxford University Press}
}

\end{filecontents}

\end{document}